\let\bbordermatrix\bordermatrix
\patchcmd{\bbordermatrix}{8.75}{4.75}{}{}
\patchcmd{\bbordermatrix}{\left(}{\left[}{}{}
\patchcmd{\bbordermatrix}{\right)}{\right]}{}{}
\newtheorem{thm}{Theorem}[section]
\newtheorem{cor}[thm]{Corollary}
\newtheorem{lemma}[thm]{Lemma}
\newtheorem{prop}[thm]{Proposition}
\theoremstyle{definition}
\newtheorem{definition}[thm]{Definition}
\newtheorem{example}[thm]{Example}
\theoremstyle{remark}
\newtheorem{remark}[thm]{Remark}
\newtheorem*{note}{Note}
\numberwithin{equation}{section}
\newcommand{\mcal}{\mathcal}
\newcommand{\tb}{\textbf}
\newcommand{\ti}{\textit}
\newcommand{\al}{\alpha}
\newcommand{\ga}{\gamma}
\newcommand{\bbP}{\mathbb{P}}
\newcommand{\dsum}{\displaystyle \sum}
\newcommand{\dprod}{\displaystyle \prod}
\newcommand{\be}{\begin{equation}}
\newcommand{\ee}{\end{equation}}
\newcommand{\ba}{\begin{align*}}
\newcommand{\ea}{\end{align*}}
\newcommand{\x}{\boldsymbol{x}}
\newcommand{\y}{\boldsymbol{y}}
\newcommand{\z}{\boldsymbol{z}}
\newcommand{\bX}{\boldsymbol{X}}
\newcommand{\bY}{\boldsymbol{Y}}
\newcommand{\EY}{E_{\mathcal{Y}_1}}
\newcommand{\bs}{\boldsymbol\sigma}
\newcommand{\bi}{\boldsymbol i}
\newcommand{\lf}{\lfloor}
\newcommand{\rf}{\rfloor}
\newcommand{\lc}{\lceil}
\newcommand{\rc}{\rceil}
\newcommand{\lp}{\left(}
\newcommand{\rp}{\right)}
\newcommand{\lb}{\left[}
\newcommand{\rb}{\right]}
\begin{document}
\title[Random Permutation Graphs]{On the Connected Components of a Random Permutation Graph with a Given Number of Edges}

\author{H\"{u}seyin Acan}
\address{Department of Mathematics, The Ohio State University, 
Columbus, OH 43210}
\email{acan@math.osu.edu}

\author{Boris Pittel}
\address{Department of Mathematics, The Ohio State University, 
Columbus, OH 43210}
\email{bgp@math.osu.edu}

\thanks{Acan's research supported in part by NSF Grant DMS-1101237} 
\thanks{Pittel's research supported in part by NSF Grant DMS-1101237}
\keywords{random permutation, permutation graph, connectivity threshold, indecomposable permutation, inversion}
\subjclass{05A05; 60C05}

\maketitle

\begin{abstract}
A permutation $\bs$ of $[n]$ induces a graph $G_{\bs}$ on $[n]$ -- its edges are inversion pairs in $\bs$. The graph $G_{\bs}$ is connected if and only if $\bs$ is indecomposable. Let $\bs(n,m)$ denote a permutation chosen uniformly at random among all permutations of $[n]$ with $m$ inversions. Let $p(n,m)$ be the common value for the probabilities $\bbP(\bs(n,m) \textrm{ is indecomposable})$ and  $\bbP(G_{\bs(n,m)} \textrm{ is connected})$. We prove that $p(n,m)$ is non-decreasing with $m$ by constructing a Markov process $\{\bs(n,m)\}$ in which $\bs(n,m+1)$ is obtained by increasing one of the components of the inversion sequence of $\bs(n,m)$ by one. We show that, with probability approaching 1, $G_{\bs(n,m)}$ becomes connected for $m$ asymptotic to $m_n=(6/\pi^2)n\log n$. We also find the asymptotic sizes of the largest and smallest components when the number of edges is moderately below the threshold $m_n$.
\end{abstract}

\section{Introduction}\label{Intro}

We denote by $S_n$ the set of permutations of $[n]:=\{1,2,\dots,n\}$. A permutation $\bs=\sigma(1)\sigma(2)\dots\sigma(n) \in S_n$ is \textit{decomposable} if there is a positive integer $k<n$ such that $\{\sigma(1),\sigma(2),\dots,\sigma(k)\}=\{1,2,\dots,k\}$.  If no such $k$ exists, $\bs$ is termed \ti{indecomposable}.
A permutation $\bs$ gives rise to an associated graph $G_{\bs}$
on a vertex set $[n]$, with edge set formed by inversions in $\bs$. That is,
$i<j$ are joined by an edge if and only if  $\bs^{-1}(i)>\bs^{-1}(j)$. The graph $G_{\bs}$ is connected if and only if $\bs$ is indecomposable.  

Indecomposable permutations were first studied by Lentin ~\cite{Lentin} and Comtet ~\cite{Comtet72,Comtet74}. Lentin ~\cite{Lentin} showed that $f(n)$, the number of indecomposable permutations of length $n$, satisfies the recurrence relation 
\[	n!-f(n)=\sum_{i=1}^{n-1}(n-i)!f(i), \quad f(1):=1,			\]
and consequently, $f(n)$ is the coefficient of $t^n$ in the series $1- \big( \sum_{k\ge 0}k!t^k\big)^{-1}$. The same sequence starting with $n=2$ appears in a paper by Hall ~\cite{Hall} in which he shows that the number of subgroups of index $n$ in the free group generated by 2 elements is $f(n+1)$. 

Comtet ~\cite{Comtet72} proved that a permutation $\bs$ chosen uniformly at random among all $n!$ permutations  is indecomposable, whence $G_{\bs}$ is connected, with probability $1-2/n
+O(n^{-2})$. That $\bs$ is indecomposable with high probability (whp), i.e.,
with probability approaching $1$ as $n\to\infty$, should not be too surprising. Indeed,
the number of inversions in a uniformly random $\bs$ is sharply concentrated around its mean value, which is $n(n-1)/4$. So the accompanying  graph $G_{\bs}$ whp has a high edge
density, and as such should be connected whp.
 
Mendez and Rosenstiehl ~\cite{MR} gave a bijection between indecomposable permutations of $[n]$ and pointed hypermaps of size $n-1$. In a recent paper ~\cite{CMR}, Cori et al. proved that the probability $\bbP(\boldsymbol\tau(n,m) \textrm{ is indecomposable})$ is monotone non-increasing in $m$ where $\boldsymbol\tau(n,m)$ denotes a permutation chosen uniformly at random from all permutations of $[n]$ with $m$ cycles. When $m/n \to c$, ($0 < c<1$), they also found the asymptotic probability $p(c)$ of indecomposability  of $\boldsymbol{\tau}(n,m)$.

For more information on  indecomposable permutations, we refer the reader to Comtet ~\cite{Comtet74}, B\'{o}na ~\cite{Bona04}, Flajolet and Sedgewick ~\cite{FS},  Cori ~\cite{Cori}, and the references therein.

In this paper, we study the probability of $\bs(n,m)$ being indecomposable, where $\bs(n,m)$ denotes a permutation chosen uniformly at random from all permutations of $[n]$ with exactly $m$ inversions. In Section 2, we show that the probability $\bbP(\bs(n,m) \textrm{ is indecomposable})$ is non-decreasing in $m$ by finding a Markov process that at $m$-th step produces $\bs(n,m)$ 
from $\bs(n,m-1)$ via increasing one of the components of the {\it inversion sequence\/} of $\bs(n,m-1)$ by one. Counterintuitively, and unlike the Erd\H{o}s-R\'{e}nyi graph process, the set of inversions of $\bs(n,m)$ does not
necessarily contain that of $\bs(n,m-1)$. Despite this, we think that this Markov process may well have other uses in the analysis of $\bs(n,m)$. In Section 3, we find a threshold value $m_n=(6/\pi^2)n\log n$,
for transition
from decomposability to indecomposability of the random permutation $\bs(n,m)$. That is,   $\bs(n,m)$ is decomposable with probability approaching $1$ if $\lim m/m_n<1$, and $\bs(n,m)$
is indecomposable with probability approaching $1$ if $\lim m/m_n>1$. Equivalently,
$m_n$ is the threshold value of the number of edges for connectedness of the 
accompanying permutation graph $G_{\bs(n,m)}$. Notice that $m_n=\Theta(n\log n)$, 
analogously to a classic result of Erd\H{o}s and R\'{e}nyi for $G(n,m)$, the graph chosen
uniformly at random from all graphs with $m$ edges, in which case $m_n=(1/2) n\log n$.
We show further that for 
\[
m=\frac{6n}{\pi^2}\left(\log n+\frac{1}{2}\log\log n+\frac{1}{2}\log (12/\pi)-\frac{12}{\pi^2}+\mu_n\right),  
\quad(|\mu_n|=o(\log\log\log n)),
\]
the number of components of $G_{\bs(n,m)}$ is asymptotically $1+\text{Poisson\,}(e^{-\mu_n})$. 

In Section \ref{se: block sizes}, we demonstrate that, for $\mu_n\to-\infty$ and $|\mu_n|=o(\log\log\log n)$, the
lengths of the largest and the smallest components, scaled by $n$, are asymptotic
to the lengths of the largest and the smallest subintervals in a partition of $[0,1]$ by
$\lfloor e^{-\mu_n}\rfloor$ randomly, and independently,  scattered points. This is quite different than what is observed in the Erd\H{o}s-R\'{e}nyi graph $G(n,m)$, where, whp,  a unique component of order $\Theta(n)$ starts to appear when the number of edges slightly exceeds $n/2$. On the other hand, isolated vertices exist in $G(n,m)$ whp before the graph becomes connected. Hence, in this near-subcritical phase,  
the component sizes in $G_{\bs(n,m)}$ are more balanced compared to component sizes of $G(n,m)$ in a near-subcritical phase.
 
\subsection{Preliminaries}

For a permutation $\bs \in S_n$, a pair $(\sigma(i),\sigma(j))$ is called an \ti{inversion} if $i<j$ and $\sigma(i) > \sigma(j)$. The inversion $(\sigma(i),\sigma(j))$ indicates that the pair $(\sigma(i),\sigma(j))$ is out of order in $\bs$, i.e., the bigger number $\sigma(i)$ appears before the smaller number $\sigma(j)$ in the word notation of the permutation $\bs=\sigma(1)\sigma(2)\dots\sigma(n)$. The \ti{permutation graph} $G_{\bs}$ associated with $\bs$  is the graph with vertex set $[n]$ and edge set corresponding to the inversions of $\bs$; if $(\sigma(i),\sigma(j))$ is an inversion, then $\{\sigma(i),\sigma(j)\}$ is an edge in $G_{\bs}$. A permutation graph can be viewed as an intersection graph induced by a special chord diagram, and that those graphs with no constraint on number of crossings had been studied, notably by Flajolet and Noy \cite{FN}.

Koh and Ree ~\cite{KohRee} showed that $G_{\bs}$ is connected if and only if $\bs$ is  indecomposable. For completeness, here is a proof. One direction is easy.  If a permutation is decomposable, then there is a positive integer $k<n$ such that $\bs([k])=[k]$, so that there is no edge from $[k]$ to $[n]\setminus [k]$ in $G_{\bs}$.  The other direction follows from the observation that if $(a,b)$ is an inversion and $a<c<b$, then either $(a,c)$ is an inversion or $(c,b)$ is an inversion. Equivalently, if $a$ and $b$ are neighbors in $G_{\bs}$ and $a<c<b$, then $c$ is a neighbor of at least one of $a$ and $b$. It follows from this observation that the vertex set of any component of $G_{\bs}$ is a consecutive 
subset of  $[n]$. If $G_{\bs}$ is not connected, then let $k<n$ be the biggest vertex in the component of vertex $1$. Then $\bs([k])=[k]$, which means that $\bs$ is  decomposable. 

More generally,  let $\bs=\sigma(a)\dots \sigma(b)$ be a permutation of the set $\{a,a+1,\dots,b\}$;  $\bs$ is called decomposable if 
\[	 \{\sigma(a),\sigma(a+1),\dots,\sigma(k)\}=\{a,a+1,\dots,k\}	\]
for some $a\le k \le b-1$, and it is called indecomposable otherwise. Any permutation $\bs$ of $[n]$ can be decomposed into its \textit{indecomposable blocks}, where each block is an indecomposable permutation of some consecutive set of numbers. The indecomposable blocks of $\bs$ correspond to the connected components of $G_{\bs}$. We write $\bs = (\bs^1, \bs^2, \dots, \bs^l)$ where $\bs^i$ is an indecomposable permutation of $\{k_{i-1}+1,k_{i-1}+2,\dots,k_i\}$ for some integers $0=k_0<k_1<k_2<\cdots <k_l=n$. For example, if $\bs= 24135867$, then $\bs=(\bs^1,\bs^2,\bs^3)$, where $\bs^1=2413$, $\bs^2=5$, and $\bs^3=867$.

We denote the set of the permutations of $[n]$ with $m$ inversions by $\mathcal{S}(n,m)$, and the cardinality of $\mcal S(n,m)$ by $s(n,m)$. 
The generating function for the numbers $s(n,m)$ was shown by Muir ~\cite{Muir} to satisfy
\begin{equation}\label{Snx=}	\mathcal{S}_n(x):= 	\sum_{m \ge 0}s(n,m)x^m= \prod_{i=0}^{n-1}(1+x+\cdots+x^{i}).
\end{equation}			
Probabilistically, this product-type formula means that the number of inversions of the
uniformly random permutation of $[n]$ equals, in distribution, to $\sum_{i=0}^{n-1}X_i$,
where $X_i$ is uniform on $\{0,1,\dots,i\}$, and $X_0,\dots,X_{n-1}$ are independent. 
Using the formula above for $\mathcal{S}_n(x)$, many asymptotic results were found for $s(n,m)$, see for instance Bender ~\cite{Bender},  Clark ~\cite{Clark}, Louchard and Prodinger  ~\cite{LP}, and Margolius ~\cite{Margolius}. From formula~\eqref{Snx=} we obtain the recurrence
\begin{equation} \label{eq: recurrence s(n,m)}
s(n,m)= s(n-1,m)+\cdots+s(n-1, m-(n-1)),
\end{equation}
where $s(n-1,i)=0$ if $i<0$.

We consider $\mcal S(n,m)$ as a probability space equipped with the uniform distribution, denoting the random element of this space by $\bs(n,m)$. If $m< n-1$, then $\bs(n,m)$ is necessarily decomposable since any graph with $n$ vertices and $m$ edges is disconnected if $m<n-1$. Similarly, if $m > {n-1 \choose 2}$, then $\bs(n,m)$ is necessarily indecomposable. Therefore we only need to consider the values of $m$ between $(n-1)$ and ${n-1 \choose 2}$. 

A key element in our proofs is a classic notion of the \textit{inversion sequence} of a permutation. For a permutation $\bs = \sigma(1)\sigma(2)\dots \sigma(n)$, the inversion sequence of $\bs$ is 
$\x=\x(\bs) =x_1x_2\ldots x_n$, where 
\[	x_i= | \{j: j<i \textrm{ and }\sigma(j)>\sigma(i) \} |	.	\]
In words, $x_i$ is the number of inversions involving $\sigma(i)$ and the elements of $\bs$ preceding $\sigma(i)$. It is clear from the definition that 
\be \label{s1:e1}
0 \leq x_i \leq i-1 \quad (1\leq i\leq n).	
\ee
There are exactly $n!$ integer sequences of length $n$ meeting the constraint ~\eqref{s1:e1}. In fact, every sequence $\boldsymbol x$ satisfying ~\eqref{s1:e1} is an inversion sequence of a permutation, so that there is a bijection between the set of permutations and the set of sequences $\x$ satisfying ~\eqref{s1:e1}. Hence we have a bijective proof of \eqref{Snx=}.

Here is how a permutation $\bs$ is uniquely recovered from its inversion sequence $\x$. First of all, $\sigma(n)=n-x_n$. Recursively, if $\sigma(n),\sigma(n-1),\dots,\sigma(t+1)$ have been determined, then $\sigma(t)$ is the $(1+x_t)$-th largest element in the set $[n]\setminus \{\sigma(n), \sigma(n-1),\dots, \sigma(t+1) \}$. For more information on inversion sequences see Knuth \cite[Section 5.1.1]{Knuth}.

\begin{example}\label{IS}
Let $\x= 002012014$. The permutation $\bs$ corresponding to this sequence is a permutation of $[9]$. Then $\sigma(9)=9-4=5$. To find $\sigma(8)$ we need to find the second largest element of $\{1,2,3,4,6,7,8,9\}$, which is $8$. To find $\sigma(7)$, we need to find the  largest element of the set $\{1,2,3,4,6,7,9\}$, which is $9$. If we continue in the same manner, we get $\bs = 231764985$.
\end{example}
Let $\|\boldsymbol x\|:=x_1+x_2+\dots+x_n$.
The number of inversions in $\bs$ is equal to $||\x||$, so  the set $\mcal S(n,m)$ is, bijectively, the set of $\x$'s meeting ~\eqref{s1:e1} and 
\be \label{s1:e2}
	||\x||=m.		
\ee

We denote the set of sequences $\x$ satisfying ~\eqref{s1:e1}--\eqref{s1:e2} by $\mcal X(n,m)$. The bijection enables us to identify $\bs(n,m)$, the uniformly random permutation of $[n]$, with $\bX=\bX(n,m)$, chosen uniformly at random from $\mcal X(n,m)$.

To conclude the preliminaries, we note that a permutation $\bs=\sigma(1)\sigma(2)\dots\sigma(n)$ is decomposable if and only if there exists $k<n$ such that  its inversion sequence $\x$ has a tail $x_{k+1}x_{k+2} \dots x_n$ which is an inversion sequence of a permutation of $[n-k]$, see Cori et al. ~\cite{CMR}.

\begin{remark}
The permutation $\bs(n,m)$ has a different distribution than the permutation obtained after $m$-th step in the random sorting network, where, at each step, two adjacent numbers in correct order are chosen uniformly randomly and they are swapped. Although the choice at each step is uniform in this random sorting network, the resulting permutation after $m$ steps is not uniform among all permutations with exactly $m$ inversions.
\end{remark}

\section{A Markov Process}\label{Process}

So far, each uniformly random $\bs(n,m)$ has been defined separately, on its own
probability space $\mcal S(n,m)$. Our goal in this section is to build a Markov process that produces $\bs(n,m)$ from $\bs(n,m-1)$ for each $m$. In view of bijection between $\mcal S(n,m)$ and $\mcal X(n,m)$, it suffices to construct a Markov process $\{\bX(\mu)\}_{\mu\ge 0}=\{\bX(n,\mu)\}_{\mu\ge 0}$  in such a way that each  $\bX(\mu)$ is distributed uniformly on $\mcal X(n,\mu)$, the set of solutions of ~\eqref{s1:e1}--\eqref{s1:e2} with $m=\mu$, and $\bX(\mu+1)$ is obtained by adding $1$ to one of the components $\bX(\mu)$ according to a (conditional) probability distribution $\bold p(\bX(\mu))$. 

It is convenient to view  such a process as a dynamic allocation scheme.
Formally, there are $n$ boxes numbered $1$ through $n$,  and $\binom{n}{2}$ indistinguishable balls.
Box $i$ has capacity $i-1$, i.e., it can accept at most $i-1$ balls. Recursively, after $t-1$ steps
the occupancy numbers are ${\bX}(t-1)=X_1(t-1)\dots X_n(t-1)$, satisfying ~\eqref{s1:e1}--\eqref{s1:e2}
for $m=t-1$, and we throw the $t$-th ball  into one of the boxes according to a probability distribution 
$\bold p(\bX(t-1))=\{p_i(\bX(t-1))\}_{i\in [n]}$. Obviously, $p_i(\x)=0$
if $x_i=i-1$. Once $\bold p(\cdot)$ is defined, we obtain a Markov process $\{\bX(t)\}_{t\ge 0}$.
We have to show  existence of an admissible $\bold p(\x)$ such that, if
$\bX(0)=\boldsymbol 0$, then 
for every $m$, $\bX(m)$ is distributed uniformly on $\mcal X(n,m)$.

The proof is by induction on $n$. We begin with a reformulation of the problem in terms of an one-step transition matrix.  

\subsection{Basic definitions and observations}
An equivalent formulation of the problem is as follows.
For every $n$ and $0\le m<\binom{n}{2}$, we have to find a probability transition
matrix $\rho=\rho_{n,m}$. The matrix $\rho$ has $s(n,m)$ rows and $s(n,m+1)$ columns 
indexed by the elements of $\mcal X(n,m)$ and the elements of $\mcal X(n,m+1)$, respectively. For $\x \in \mcal X(n,m)$ and $\y \in \mcal X(n,m+1)$, we say that $\y$ \textit{covers} $\x$ if there is an index $j$ such that $y_j=x_j+1$ and $y_i=x_i$ for $j\neq i$.

The entries $\rho(\x,\y)$, where $\x \in \mcal X(n,m)$ and $\y \in \mcal X(n,m+1)$, have to meet three conditions, a
trivial one
\be \label{s2:e1}
\sum_{\y}\rho(\x,\y)=1, 
\ee
the uniformity preservation condition
\be\label{s2:e2}
\frac{1}{s(n,m)}\sum_{\x}\rho(\x,\y)=\frac{1}{s(n,m+1)},
\ee
and if $\y$ does not cover $\x$, then
\be \label{eq: third condition}
\rho(\x,\y)=0.
\ee

For illustration consider the two simplest cases.

\tb{Case $\boldsymbol{n=2}$}.  Necessarily $m=0$, and $\rho_{2,0}$ is a $1\times 1$ matrix with entry equal
$1$.

\tb{Case $\boldsymbol{n=3}$}.  Here $m$ can take the values $0,1$, and $2$. We have $s(3,0)=1$, $s(3,1)=2$, $s(3,2)=2$,  and $s(3,3)=1$. We present the matrices $\rho_{3,0}$, $\rho_{3,1}$, and $\rho_{3,2}$ below.
\[
\rho_{3,0} =\quad \bbordermatrix{\text{}&010&001\cr
                000 & 1/2 &1/2}, \quad 
\rho_{3,1} =\quad \bbordermatrix{\text{}&011&002\cr
                010& 1 & 0 \cr
                001& 0 & 1}, \quad
\rho_{3,2} =\quad  \bbordermatrix{\text{} &012\cr
                011 & 1 \cr
		  002 & 1}				
\]

\begin{lemma}\label{Symmetry}
$s(n,m) = s(n,{n \choose 2}-m)$.
\end{lemma}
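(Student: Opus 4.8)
The cleanest approach is to exhibit an explicit bijection on inversion sequences. Recall that, via Muir's identity \eqref{Snx=}, $s(n,m)$ is the number of sequences $\x=x_1x_2\dots x_n$ with $0\le x_i\le i-1$ and $\|\x\|=m$. The total capacity of the boxes is $\sum_{i=1}^n (i-1)=\binom n2$, so the map $\x\mapsto\x^*$ defined by $x_i^*:=(i-1)-x_i$ sends a sequence satisfying \eqref{s1:e1} to another one satisfying \eqref{s1:e1}, is clearly an involution, and satisfies $\|\x^*\|=\binom n2-\|\x\|$. Hence it restricts to a bijection between $\mcal X(n,m)$ and $\mcal X\big(n,\binom n2-m\big)$, which gives the claim.

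First I would state the complementation map $x_i\mapsto (i-1)-x_i$ on coordinates and check that it is well-defined as a map from $\{\x:\text{\eqref{s1:e1} holds}\}$ to itself: each new coordinate lies in $[0,i-1]$ precisely because the old one does. Second I would note it is an involution, hence a bijection. Third I would compute $\|\x^*\|=\sum_{i=1}^n\big((i-1)-x_i\big)=\binom n2-\|\x\|$, so $\x\in\mcal X(n,m)$ iff $\x^*\in\mcal X\big(n,\binom n2-m\big)$. Restricting the involution to these fibers yields $|\mcal X(n,m)|=\big|\mcal X\big(n,\binom n2-m\big)\big|$, i.e. $s(n,m)=s\big(n,\binom n2-m\big)$.

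Alternatively one can argue purely at the level of generating functions: in \eqref{Snx=} each factor $1+x+\dots+x^i$ is palindromic of degree $i$, so the product $\mcal S_n(x)$ is palindromic of degree $\sum_{i=0}^{n-1} i=\binom n2$, which says exactly that the coefficient of $x^m$ equals the coefficient of $x^{\binom n2-m}$. I would probably present the bijective proof as the main one (since it fits the inversion-sequence viewpoint used throughout) and mention the generating-function argument in a sentence. There is no real obstacle here; the only thing to be careful about is making the index bookkeeping in $\sum_{i=1}^n(i-1)=\binom n2$ explicit so the degree/weight computation is transparent.

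Note that this lemma, combined with the monotonicity of $p(n,m)$ proved via the Markov process, is exactly what lets one transfer Comtet's high-probability indecomposability at $m\approx\binom n2/2$ down through all larger $m$, and it also pins down the symmetry of the threshold window.
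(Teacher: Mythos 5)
Your proof is correct and is essentially the paper's proof: the paper exhibits the same complementation involution $x_i\mapsto (i-1)-x_i$ on inversion sequences (written there as $x_1\dots x_n\leftrightarrow (0-x_1)\dots(n-1-x_n)$). Your added palindromicity remark on $\mathcal{S}_n(x)$ is a nice one-line alternative but is the same fact viewed through generating functions.
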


\begin{proof}
There is a bijection between $\x\in \mcal X(n,m)$ and 
$\y \in \mcal X \left(n,\binom{n}{2}- m\right)$ given by
\[
x_1\dots x_n\leftrightarrow (0-x_1)\dots (n-1-x_n):=y_1\dots y_n.
\qedhere \]
\end{proof}

\begin{lemma}\label{halfm}
If $\rho_{n,m}$ exists, then so does $\rho_{n,\tilde m}$ for $\tilde m={n \choose 2}-1-m$.
\end{lemma}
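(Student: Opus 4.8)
The plan is to obtain $\rho_{n,\tilde m}$ from $\rho_{n,m}$ by transporting the latter through the complementation bijection used in the proof of Lemma~\ref{Symmetry}, namely $\phi\colon x_1\dots x_n\mapsto(0-x_1)\dots(n-1-x_n)$. This $\phi$ is an involution, it maps $\mcal X(n,k)$ bijectively onto $\mcal X\!\big(n,\binom n2-k\big)$, and, since $\tilde m+1=\binom n2-m$ and $\tilde m=\binom n2-(m+1)$, it therefore maps $\mcal X(n,\tilde m+1)$ onto $\mcal X(n,m)$ and $\mcal X(n,\tilde m)$ onto $\mcal X(n,m+1)$; Lemma~\ref{Symmetry} also gives $s(n,\tilde m)=s(n,m+1)$ and $s(n,\tilde m+1)=s(n,m)$. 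The single extra fact needed is that $\phi$ \emph{reverses} the covering relation: if $\z$ covers $\y$ through an index $j$, so $z_j=y_j+1$ and $z_i=y_i$ otherwise, then $\phi(\z)_j=\phi(\y)_j-1$ and $\phi(\z)_i=\phi(\y)_i$ otherwise, i.e.\ $\phi(\y)$ covers $\phi(\z)$.

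With this in hand I would simply set, for $\y\in\mcal X(n,\tilde m)$ and $\z\in\mcal X(n,\tilde m+1)$,
\[
\rho_{n,\tilde m}(\y,\z):=\frac{s(n,m+1)}{s(n,m)}\,\rho_{n,m}\big(\phi(\z),\phi(\y)\big),
\]
which makes sense because $\phi(\z)$ indexes a row and $\phi(\y)$ a column of $\rho_{n,m}$, and then check the three required properties in turn. Nonnegativity, hence membership in $[0,1]$ given the row sums below, is inherited from $\rho_{n,m}$. Property~\eqref{eq: third condition} is immediate: if $\z$ does not cover $\y$ then $\phi(\y)$ does not cover $\phi(\z)$, so $\rho_{n,m}(\phi(\z),\phi(\y))=0$. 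For~\eqref{s2:e1}, as $\z$ runs over $\mcal X(n,\tilde m+1)$ its image $\phi(\z)$ runs over all of $\mcal X(n,m)$, so $\sum_{\z}\rho_{n,\tilde m}(\y,\z)=\frac{s(n,m+1)}{s(n,m)}\sum_{\x\in\mcal X(n,m)}\rho_{n,m}(\x,\phi(\y))=\frac{s(n,m+1)}{s(n,m)}\cdot\frac{s(n,m)}{s(n,m+1)}=1$ by~\eqref{s2:e2} for $\rho_{n,m}$. For~\eqref{s2:e2}, as $\y$ runs over $\mcal X(n,\tilde m)$ its image $\phi(\y)$ runs over all of $\mcal X(n,m+1)$, so $\sum_{\y}\rho_{n,\tilde m}(\y,\z)=\frac{s(n,m+1)}{s(n,m)}\sum_{\w\in\mcal X(n,m+1)}\rho_{n,m}(\phi(\z),\w)=\frac{s(n,m+1)}{s(n,m)}$ by~\eqref{s2:e1} for $\rho_{n,m}$, and this equals $s(n,\tilde m)/s(n,\tilde m+1)$, which is exactly what~\eqref{s2:e2} demands.

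I do not expect a genuine obstacle here; the entire content is in picking the right map, and the rest is a short verification. The one place to be careful is the index bookkeeping: one must keep straight that rows of $\rho_{n,\tilde m}$ are indexed by $\mcal X(n,\tilde m)$, which $\phi$ identifies with the \emph{column} set of $\rho_{n,m}$, and columns by $\mcal X(n,\tilde m+1)$, which $\phi$ identifies with the \emph{row} set of $\rho_{n,m}$, and that the normalizing constant $s(n,m+1)/s(n,m)$ is forced and is simultaneously the correct one for both~\eqref{s2:e1} and~\eqref{s2:e2}. That a single constant works for both is precisely the content of the identities $s(n,\tilde m)=s(n,m+1)$ and $s(n,\tilde m+1)=s(n,m)$ supplied by Lemma~\ref{Symmetry}.
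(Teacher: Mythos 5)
Your proposal is correct and is essentially the paper's own proof: you use the same complementation map $\phi(\x)=(0-x_1)\dots(n-1-x_n)$, the same covering-reversal observation, and the same definition $\rho_{n,\tilde m}(\y,\z)=\frac{s(n,m+1)}{s(n,m)}\rho_{n,m}(\phi(\z),\phi(\y))$, which the paper states compactly as $\rho_{n,\tilde m}=\rho_{n,m}^T\cdot\frac{s(n,m+1)}{s(n,m)}$. Your verifications of \eqref{s2:e1}--\eqref{eq: third condition} simply spell out what the paper leaves to the reader.
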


\begin{proof}
By equations  ~\eqref{s2:e1}--\eqref{s2:e2}, the row sums and the column sums of $\rho(n,m)$ 
are $1$ and $s(n,m)/s(n,m+1)$, respectively. Given $\z$, meeting  ~\eqref{s1:e1}, define $\z^\prime=
(0-z_1)\dots(n-1-z_n)$. Then, for $\x\in \mcal X(n,\tilde{m})$, $\y\in\mcal X(n,\tilde{m}+1)$,
we have $\y^\prime\in \mcal X(n,m)$, $\x^\prime\in \mcal X(n,m+1)$, and 
$\y$ covers $\x$ if and only if $\x^\prime$ covers $\y^\prime$.  So
we set
\be \label{s2:e3}
\rho_{n,\tilde m}(\x,\y):=\rho_{n,m}(\y^\prime,\x^\prime)\cdot\frac{s(n,m+1)}{s(n,m)}.
\ee
In matrix form, we have $\rho_{n,\tilde m}= \rho_{n,m}^T\cdot \frac{s(n,m+1)}{s(n,m)}$, where $\rho_{n,m}^T$ is the transpose of $\rho_{n,m}$. By  ~\eqref{s2:e1}--\eqref{s2:e3}, the row sums of $\rho_{n,\tilde m}$ are $1$, and the column sums are
$s(n,m+1)/s(n,m)=s(n,\tilde m)/s(n,\tilde m+1)$, see Lemma ~\ref{Symmetry}.
\end{proof}

\begin{thm}\label{rhoexists}
The stochastic matrices $\rho_{n,m}$ exist for all integers $n\geq 2$ and $0\leq m \leq {n \choose 2}-1$.
\end{thm}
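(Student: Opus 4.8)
The plan is to prove existence of $\rho_{n,m}$ by induction on $n$, the base cases $n=2,3$ having been exhibited explicitly above. By Lemma~\ref{halfm} it suffices to construct $\rho_{n,m}$ for $m$ in the range $0\le m\le \frac12\binom n2 - 1$ (roughly the lower half), since the upper half is obtained by transposition and rescaling; so we may assume $m$ is not too large, which will be convenient for the combinatorial bookkeeping. The key structural device is the last component $x_n$ of an inversion sequence: a sequence $\x\in\mcal X(n,m)$ is determined by the pair $(x_n, \x')$ where $x_n\in\{0,1,\dots,n-1\}$ and $\x'=x_1\dots x_{n-1}\in\mcal X(n-1,m-x_n)$. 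Thus $\mcal X(n,m)$ is partitioned into ``slices'' indexed by the value of $x_n$, and within the slice $x_n=k$ the residual sequence $\x'$ ranges over $\mcal X(n-1,m-k)$, which by the inductive hypothesis carries the transition matrices $\rho_{n-1,\cdot}$.

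The construction of $\rho_{n,m}(\x,\y)$ then splits according to how $\y$ covers $\x$. Write $\x=(k,\x')$ with $\x'\in\mcal X(n-1,m-k)$. Either the incremented coordinate is the $n$-th one, giving $\y=(k+1,\x')$ (only possible when $k\le n-2$), or it is one of the first $n-1$ coordinates, giving $\y=(k,\y')$ with $\y'$ covering $\x'$ in $\mcal X(n-1,m-k+1)$. The idea is to choose, for each $\x=(k,\x')$, a probability $q_{n,m}(k)$ of ``incrementing the last box'' and to distribute the remaining mass $1-q_{n,m}(k)$ over the residual moves according to the inductively given row $\rho_{n-1,m-k}(\x',\cdot)$. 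That is, set
\[
\rho_{n,m}\big((k,\x'),(k+1,\x')\big)=q_{n,m}(k),\qquad
\rho_{n,m}\big((k,\x'),(k,\y')\big)=\big(1-q_{n,m}(k)\big)\,\rho_{n-1,m-k}(\x',\y').
\]
Condition~\eqref{s2:e1} and the vanishing condition~\eqref{eq: third condition} hold automatically for any choice of $q_{n,m}(k)\in[0,1]$ (with $q_{n,m}(n-1)=0$); the entire content is the uniformity-preservation condition~\eqref{s2:e2}. Summing the displayed formulas over all $\x$ covered by a fixed $\y=(k,\y')$: the contributions of the first type give $q_{n,m}(k-1)\,s(n-1,m-k+1)$ (the number of $\x'\in\mcal X(n-1,m-k+1)$ with $\y'=\x'$, times the weight — here one must be careful: $\x'$ is forced to equal $\y'$, so this term is simply $q_{n,m}(k-1)$ if $k\ge 1$), and the contributions of the second type give $(1-q_{n,m}(k))\sum_{\x'}\rho_{n-1,m-k}(\x',\y')$, which by the inductive version of~\eqref{s2:e2} equals $(1-q_{n,m}(k))\,s(n-1,m-k)/s(n-1,m-k+1)$. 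Requiring the total to equal $s(n,m)/s(n,m+1)$ for every $\y$ — and noting the right side does not depend on which $\y$ in the slice we picked — yields, for each $k$, one linear equation relating $q_{n,m}(k-1)$ and $q_{n,m}(k)$; combined with the boundary value $q_{n,m}(n-1)=0$ this is a triangular recurrence that determines all the $q_{n,m}(k)$ explicitly (using the recurrence~\eqref{eq: recurrence s(n,m)} for $s(n,m)$ to simplify).

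The main obstacle — and the place the argument can actually fail — is showing that the $q_{n,m}(k)$ produced by this recurrence all lie in $[0,1]$; nonnegativity and the upper bound $q\le1$ are not automatic from the linear algebra and must be verified by an inequality on ratios of the numbers $s(n-1,\cdot)$. I expect that solving the recurrence gives a clean closed form such as $q_{n,m}(k)=1 - s(n,m+1-? )\,/\,(\text{something})$ in which the needed bounds reduce to monotonicity/log-concavity-type properties of $m\mapsto s(n,m)$ that follow from~\eqref{Snx=} (the polynomial $\mcal S_n(x)$ has only real negative roots up to the product structure, or at least its coefficient sequence is unimodal), together with the restriction to the lower half of the $m$-range secured by Lemma~\ref{halfm}. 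Once the $q_{n,m}(k)\in[0,1]$ is established, assembling the slices gives an admissible $\rho_{n,m}$ and closes the induction.
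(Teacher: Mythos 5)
Your proposal reproduces the paper's construction almost verbatim: your $q_{n,m}(k)$ is the paper's $\beta_{k+1}$, the two-block-diagonal structure of the transition matrix is the same, the reduction to $m<\binom{n}{2}/2$ via Lemma~\ref{halfm} is the same, and the bound $q\in[0,1]$ is indeed established from the log-concavity of $m\mapsto s(n,m)$ (the paper cites B\'ona's combinatorial proof) together with the half-range restriction. The one point you elide is that the column-sum conditions give $n$ equations (one per value of $y_n$) in only $n-1$ free parameters, so after solving the triangular recurrence from one end you must still check that the leftover equation is automatically satisfied; the paper does this by showing it is a weighted sum of the others via the recurrence~\eqref{eq: recurrence s(n,m)}.
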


\begin{note}
Before we start our proof we make a comment why we should expect to find the matrices $\rho_{n,m}$. The entries of $\rho_{n,m}$ must satisfy [$s(n,m)+s(n,m+1)$] equations given in  \eqref{s2:e1} and \eqref{s2:e2}. On the other hand, by equation \eqref{eq: third condition}, the number of variables $\rho(\x,\y)$ is equal to $\sum_{\ell=1}^{n-1}\ell\cdot s(n,m+1,\ell)$, where $s(n,m+1,\ell)$ is the number of elements in $\mcal X(n,m+1)$ with exactly $\ell$ nonzero terms. For $m\ge 1$ and $n\ge 4$, it is not hard to see that the number of variables is bigger than the number of equations that must be satisfied. In fact, for large values of $m$ and $n$, the number of variables is much bigger than the number of equations.
\end{note}

\begin{proof}[Proof of Theorem \ref{rhoexists}]
In our proof, which is by induction on $n$, we reduce the number of variables and consider more special types of matrices.
The basis of induction is  the cases $n=2$, $n=3$ already considered 
above. Inductively, let $n\ge 4$ and suppose that the matrices $\rho_{n-1,m^\prime}$ exist for all possible values of $m^\prime$. By Lemma ~\ref{halfm}, we only need to prove that $\rho_{n,m}$ exists for an arbitrary $m<{n \choose 2}/2$, so  in the rest of the proof we assume $m<{n\choose 2}/2$.

We denote by $\preceq$ the reverse lexicographic order on $n$-long integer sequences. Thus, if $\x=x_1x_2\dots x_n$ and $\y=y_1y_2\dots y_n$ are two sequences, then $\boldsymbol x\preceq \boldsymbol y$  if and only if   $y_i>x_i$ for $i=\max\{j\,:\,y_j\neq x_j\}$.  For example $0110\textbf{2}32 \preceq 0010\textbf{3}32$, and $i=5$. 

We now introduce a matrix $M$ that will serve as $\rho_{n,m}$ for an appropriate choice of parameters. Its $s(n,m)$ rows  and $s(n,m+1)$ columns  are labeled by the inversion sequences $\boldsymbol x$, with $\|\boldsymbol x\|=m$, and  by the inversion
sequences $\boldsymbol y$,
with $\|\boldsymbol y\|=m+1$ resp.,  both rows and columns being listed in the increasing order with respect to $\preceq$. 
Specifically,
\begin{enumerate}[(i)]
\item $M(\boldsymbol x,\boldsymbol y)=0$ if $\boldsymbol y$ does not cover $\boldsymbol x$;
\item if $\boldsymbol y$ covers $\boldsymbol x$, and $y_n=i=x_n+1$, then $M(\boldsymbol x,\boldsymbol y)=\beta_i$, where $\beta_i$ is to be determined for $1\leq i\leq n-1$;
\item if $\boldsymbol y$ covers $\boldsymbol x$, and $y_n=x_n=j$, then 
\[	 M(\boldsymbol x,\boldsymbol y)=(1-\beta_{j+1})\cdot \rho_{n-1,m-j}(\boldsymbol{\hat x},\boldsymbol{\hat y}),\quad \boldsymbol{\hat x}:=x_1\dots x_{n-1},\,
\,\boldsymbol{\hat y}:=y_1\dots y_{n-1}, \]
\end{enumerate}
with $\beta_n:=0$. $\beta_j$ has the meaning of (conditional) probability that the next inversion sequence is obtained by increasing the last component of the current one. With complementary probability $1-\beta_j$ we change one of the other components of the current inversion  sequence using the transition matrices for $n-1$. We need to determine the probabilities $\beta_1,\dots,\beta_r$ from \textit{exactly} $r$ equations, where $r:=\min\{n-1,m+1\}$. 

Note that, condition (iii) guarantees that all the row sums are equal to 1. Since $M(\boldsymbol x,\boldsymbol y)=0$ unless $\boldsymbol y$ covers $\boldsymbol x$, $M$ has a two-diagonal block structure, see Figure ~\ref{figure1}. (Some of the rows and columns are empty if $m<n-1$.) From (iii),  the 
diagonal of $M$, that starts at the left uppermost block,  consists of the submatrices  $\rho^{\prime}_{n-1,m-j}:=(1-\beta_{j+1})\rho_{n-1,m-j}$, of order $s(n-1,m-j)\times s(n-1,m+1-j)$. The second, upper, diagonal of $M$
consists of the matrices $\beta_j I_j$, where $I_j$ is an identity matrix, of order
$s(n-1,m+1-j)\times s(n-1,m+1-j)$.   All the other block-submatrices are $0$ matrices. 
Clearly if  there exist  $\beta_1,\beta_2,\dots,\beta_{n-1}\in [0,1]$ such that $M$ is a stochastic matrix,  then $M$ is a sought-after $\rho_{n,m}$.

\begin{figure}\caption*{}
\begin{center}
\begin{tabular}{r|c|c|c|c|c|c|}
	\multicolumn{1}{c}{}
&	\multicolumn{1}{c}{$y_n=0$}
&	\multicolumn{1}{c}{$y_n=1$} 
&	\multicolumn{1}{c}{$y_n=2$} 
&	\multicolumn{1}{c}{$\qquad \dots \qquad$}
&	\multicolumn{1}{c}{$y_n=n-2$} 
&	\multicolumn{1}{c}{$y_n=n-1$}   \\
\cline{2-7}
$x_n=0$ & $\rho^{\prime}_{n-1,m}$ & $\beta_1 I$ 	&	  &  & &			\\
\cline{2-7}
$x_n=1$ & & $\rho^{\prime}_{n-1,m-1}$  & $\beta_2 I$    &	 &	 & 			\\
\cline{2-7}
$x_n=2$ & &   &  $\rho^{\prime}_{n-1,m-2}$ & $\ddots$	 &	 &		\\
\cline{2-7}
$\vdots$ &  &   & & $\ddots$ & $\ddots$   &									\\
\cline{2-7}
$x_n=n-2$ &  & & 	&	 &$\rho^{\prime}_{n-1,m-n+2}$ & $\beta_{n-1} I$				\\
\cline{2-7}
$x_n=n-1$ &  &   & 	&	 &	 &$\rho^{\prime}_{n-1,m-n+1}$ 				\\
\cline{2-7}
\end{tabular}
\caption{Block structure of $M$ for $m\ge n-1$.} \label{figure1}
\end{center}
\end{figure}

Before we go into details, let us illustrate how $\rho_{4,2}$ is obtained from $\rho_{3,0}$, $\rho_{3,1}$, and $\rho_{3,2}$. The matrix $M$ will have $5$ rows and $6$ columns since $s(4,2)=5$ and $s(4,3)=6$. The sequences corresponding to the rows are $0110,0020,0101,0011,0002$, and the sequences corresponding to the columns are $0120,0111,0021,0102,0012,0003$. The submatrices $\rho^{\prime}_{3,2}$, $\rho^{\prime}_{3,1}$, and $\rho^{\prime}_{3,0}$ are $2\times 1$, $2\times 2$, and $1\times 2$ matrices, respectively. Likewise, the matrices $\beta_1 I$, $\beta_2 I$, and $\beta_3 I$ are $2\times 2$, $2\times 2$, and $1\times 1$ matrices, respectively. Using the matrices for $n=3$ we see that $M$ has the following structure.

\[
M =\quad \bbordermatrix{  & 0120 & 0111 & 0021 & 0102 & 0012 & 0003\cr 
                0110 & 1-\beta_1& \beta_1 & 0 & 0 & 0 & 0 \cr
                0020 & 1-\beta_1 & 0 & \beta_1 & 0 & 0 & 0  \cr
		 0101 & 0 & 1-\beta_2 & 0 & \beta_2 & 0 & 0  \cr
		 0011 & 0 & 0 & 1-\beta_2 & 0 & \beta_2 & 0  \cr	
		 0002 & 0 & 0 & 0 & (1-\beta_3)/2 & (1-\beta_3)/2 & \beta_3		}
\]\\

For $M$ to serve as $\rho_{4,2}$, $M$'s row sums must be $1$, and $M$'s column sums must be $5/6$. Solving the resulting linear equations, we obtain  $\beta_1=7/12$, $\beta_2=9/12$, and $\beta_3=10/12$. Hence we have

\[
\rho_{4,2} =\quad \bbordermatrix{  & 0120 & 0111 & 0021 & 0102 & 0012 & 0003\cr 
                0110 & 5/12 & 7/12 & 0 & 0 & 0 & 0 \cr
                0020 & 5/12 & 0 & 7/12 & 0 & 0 & 0  \cr
		0101 & 0 & 3/12 & 0 & 9/12 & 0 & 0  \cr
		0011 & 0 & 0 & 3/12 & 0 & 9/12 & 0  \cr	
		0002 & 0 & 0 & 0 & 1/12 & 1/12 & 10/12		}
\]\\

Turn to the general case. There are two possibilities: $m \ge n-1$ or $m \le n-2$. The system of equations are slightly different in the two cases.

\textbf{Case 1:} $m \geq n-1$.

Recall that $\beta_n=0$, and let $\beta_0:=0$ also. The row sums in $M$ are $1$ by (iii). As
for the column sums, we have 
\[	\sum_{\x}M(\x,\boldsymbol y)= \beta_i+ (1-\beta_{i+1})\ga_i, \quad \textrm{ if } y_n=i, \quad 0\leq i \leq n-1,		\]
where
\[ 		 \ga_i:=\dfrac {s(n-1,m-i)}{s(n-1,m-i+1)}, 	\quad  0\leq i \leq n-1,	\]
represents the column sum of the stochastic matrix $\rho(n-1,m-i)$.
If we set 
\[	\ga:=\dfrac {s(n,m)}{s(n,m+1)}, 	\] 
we need to show that the system of equations
\begin{equation}\label{eq: betas}
\beta_{k-1}+(1-\beta_{k})\gamma_{k-1}= \gamma, \quad 1\leq k \leq n 
\end{equation}
has a solution $\beta_1,\beta_2,\dots,\beta_{n-1}\in [0,1]$. In this system of equations there are $n-1$ unknowns and $n$ equations. Obviously the first $n-1$ equations have a unique solution $\beta_1, \beta_2,\dots, \beta_{n-1}$. To show
that this solution satisfies the last equation, we need to prove that this equation
is just a linear combination of the first $n-1$ equations. 

To this end, for $1\leq k \leq n-1$, we multiply the $k$-th equation in \eqref{eq: betas} by $s(n-1, m+2-k)$  to obtain
\[		\lb \beta_{k-1}	+ (1-\beta_k)\ga_{k-1} \rb \cdot s(n-1,m+2-k) = \ga \cdot s(n-1,m+2-k).		\tag{$k$}	\]
Then we add the equations $(1),\dots, (n-1)$. The right hand side (RHS) of the resulting
equation is
\[ 		\textrm{RHS}= \ga \cdot \dsum_{k=1}^{n-1}s(n-1,m+2-k) 	= \ga \cdot \dsum_{k=0}^{n-2}s(n-1,m+1-k).		\]
Using the recurrence relation  (see equation~\eqref{eq: recurrence s(n,m)})
\[		s(p,q)= \dsum_{i=0}^{p-1}s(p-1,q-i),		\]
we simplify the expression above to  
\[	\textrm{RHS}= \ga \cdot \lb s(n,m+1)-s(n-1,m+2-n)\rb = s(n,m) -\ga \cdot s(n-1,m+2-n).			\]
As for the left hand side of the resulting equation (LHS),  we have
\[	\textrm{LHS}= \dsum_{k=1}^{n-1} \beta_{k-1} \cdot s(n-1, m+2-k) +\dsum_{k=1}^{n-1}(1-\beta_k)\ga_{k-1}\cdot s(n-1, m+2-k).			\]
Plugging $\ga_i= \frac{s(n-1,m-i)}{s(n-1,m+1-i)}$ into the second sum, and using $\beta_0=0$
together with the recurrence relation for $s(p,q)$, 
we can write the above equation as
{\allowdisplaybreaks
\begin{align*}
\textrm{LHS}	&= \dsum_{k=1}^{n-1} \beta_{k-1}  s(n-1, m+2-k) +\dsum_{k=1}^{n-1}(1-\beta_k)s(n-1, m+1-k)\\
	&=  (1-\beta_{n-1})s(n-1,m+2-n) + \dsum_{k=1}^{n-2} s(n-1,m+1-k) \\
	&= (1-\beta_{n-1}) s(n-1,m+2-n) +  s(n,m)-s(n-1,m+2-n)-s(n-1,m+1-n) \\
	&= s(n,m) -s(n-1,m+1-n)- \beta_{n-1}s(n-1,m+2-n).
\end{align*}}
By equating LHS and RHS, the resulting equation becomes
\[	\ga\cdot s(n-1,m+2-n)= s(n-1,m+1-n)+ \beta_{n-1}\cdot s(n-1,m+2-n).		\]
Dividing this equation by $s(n-1,m+2-n)$ we get 
\[	\gamma= \beta_{n-1}+\gamma_{n-1},									\] 
which corresponds to the last equation in \eqref{eq: betas}.

We have proved that the system of equations \eqref{eq: betas} has a unique solution. 
It remains to show that  $\beta_1,\beta_2,\dots,\beta_{n-1}$ are all in $[0,1]$.

The sequence $\{s(n,m)\}$ is known to be log-concave, i.e., $s(n,m-1)s(n,m+1)\leq s(n,m)^2$ for $m=1,2,\dots,{n \choose 2}-1$. (For a combinatorial proof see B\'{o}na ~\cite{Bona05}). Thus, we have 
\[		\ga_i= \frac{s(n-1,m-i)}{s(n-1,m+1-i)} \geq 	\frac{s(n-1,m-i-1)}{s(n-1,m-i)} =\ga_{i+1},	\]
so that  
\be \label{s2:e4}	 \ga_0\geq \ga_1\geq \dots \geq \ga_{n-1} \geq 0.		\ee
The log-concavity of $\{s(n,m)\}$ and Lemma ~\ref{Symmetry} together imply that $\{s(n,m)\}$ is unimodal with the maximum term corresponding to $m=\lf {n \choose 2}/2 \rf$ or $m=\lc {n \choose 2}/2 \rc$. Since  $m<\binom{n}{2}/2$, we have
\be\label{s2:e5}	
0\le \gamma= \frac{s(n,m)}{s(n,m+1)} \le 1.		
\ee

Suppose first that $\beta_i >1$ for some $1\leq i\leq n-1$. Solving for $\beta_{i+1}$ in the equation 
$	\beta_{i}+(1-\beta_{i+1})\gamma_i=\gamma	$
we get
\[	\beta_{i+1} = 1+ \frac{\beta_i-\ga}{\ga_i}	>1.	\] 
Iterating this $n-1-i$ times, we get 
$\beta_{n-1} > 1$, which contradicts the equation $\beta_{n-1}+\ga_{n-1}=\ga$. Hence we must have $\beta_i \leq 1$ for all  $i\in \{1,2,\dots,n-1\}$. What's left is to show that $\beta_i$'s are all
non-negative.

By equations ~\eqref{s2:e4}--\eqref{s2:e5} there exists $i^* \in \{0,1,\dots,n\}$ such that $\ga_{i^*-1} \geq \ga \geq \ga_{i^*}$ (we can take $\ga_{-1}=1$ and $\ga_n=0$ if necessary). We have three cases to consider.

\begin{enumerate}[(1)]
\item  \label{firstcase}
$\boldsymbol{i^\ast=0}$.\\
We solve for $\beta_i$'s moving backward, starting with $\beta_{n-1}$.  By the last equation in \eqref{eq: betas}, we have $\beta_{n-1}=\ga-\ga_{n-1}\geq 0$. Using \eqref{eq: betas}, if $\beta_k \geq 0$, then
\[	\beta_{k-1} = \ga -(1-\beta_k)\ga_{k-1} \geq \ga -(1-\beta_k)\ga  = \beta_k\gamma  \geq 0.	\] 
Inductively, we get $\beta_{n-2},\beta_{n-3},\dots,\beta_1 \geq 0$.
\item $\boldsymbol{i^\ast=n}$.\\
 We solve for $\beta_i$'s moving forward, starting with $\beta_1$.  By the first equation in \eqref{eq: betas}, we have $\beta_1 = 1-\frac{\ga}{\ga_0} \geq 0$. If $\beta_k \geq 0$, then $\beta_{k+1}= \frac{\beta_k+ \ga_k-\ga}{\ga_k} \geq 0$ by \eqref{eq: betas}. Again, we get $\beta_1,\beta_2,\dots,\beta_{n-1} \geq 0$ inductively.
 \item $\boldsymbol{0< i^\ast< n}$.\\
In this case, starting with $k=1$ and moving forward, we first use the equation $\beta_{k}= \frac{\beta_{k-1}+ \ga_{k-1}-\ga}{\ga_{k-1}}$  to show that $\beta_k\geq 0$ for $k=1,2,\dots,i$. Then, we start from the last equation in \eqref{eq: betas} and go backwards (as in the case $i^\ast=0$) to see that $\beta_{i+1},\beta_{i+2},\dots,\beta_{n-1}$ are all non-negative.
\end{enumerate}

\textbf{Case 2:} $m \leq n-2$.

In this case $M$ has a block structure, with the submatrices $\rho^{\prime}_{n-1,m},\rho^{\prime}_{n-1,m-1},\dots,\rho^{\prime}_{n-1,0}$, and $ \beta_1I,\beta_2I,\dots,\beta_{m+1}I$,  forming
two block diagonals, with all other blocks filled with zeroes, see Figure ~\ref{figure1}  (the blocks corresponding to $x_n>m$ or $y_n>m+1$ do not exist).

The equations for the parameters $\beta_0=0,\beta_1,\dots,\beta_{m+1}$ are as follows:
\begin{equation}\label{eq: betas2}
\beta_{k-1}+(1-\beta_{k})\gamma_{k-1}= \gamma, \quad 1\leq k \leq m+1, \qquad \text{and} \qquad \beta_{m+1}=\gamma 
\end{equation}

As in the previous case, we multiply the equation $\beta_{k-1}+(1-\beta_{k})\gamma_{k-1}= \gamma$ in \eqref{eq: betas2} by $s(n-1,m+2-k)$ for $1\leq k \leq m+1$, add the resulting $m+1$ equations, and use equation~\eqref{eq: recurrence s(n,m)},
\[s(n,m)= s(n-1,m)+s(n-1,m-1)+\cdots+ s(n-1,0), \]
to simplify the resulting LHS and RHS. Equating the LHS and RHS, we arrive at the last equation in \eqref{eq: betas2}, which is $\beta_{m+1}=\gamma$. So, the system of equations \eqref{eq: betas2} has a unique solution. Then, arguing as in the case
$m\ge n-1$,  we show that all $\beta_i$'s lie in $[0,1]$. 

In either case, we set  $\rho_{n,m}=M$ for the corresponding parameters $\beta_i$'s.
The proof of Theorem \ref{rhoexists} is complete.
\end{proof}

\begin{definition}\label{decompositionpoint}
Let $\boldsymbol a=a_1a_2\dots a_n$ be a sequence of non-negative integers. For $j \in [n-1]$, we will say that $\boldsymbol a$ is \ti{decomposable at $j$} if $a_{j+i}\leq i-1$ for every $i \in [n-j]$. In this case, we also say that $j$ is a \textit{decomposition point}.
\end{definition}

If $\boldsymbol a$ is decomposable at $j$, then the tail $a_{j+1}a_{j+2}\dots a_n$ is an inversion sequence of a permutation. Note that a permutation $\bs$ is decomposable at $j$ if and only if its inversion sequence $\boldsymbol x(\bs)$ is decomposable at $j$. The sequence $\x= 002012014$ is decomposable at $3$ since $012014$ is the inversion sequence of the permutation $431652$.

The following theorem is a direct corollary  of Theorem \ref{rhoexists}.

\begin{thm}\label{Monotonicity}
\mbox{}
\begin{enumerate}[(i)]
\item $\bbP (\bs (n,m) \textrm{ is indecomposable}) \leq \bbP (\bs(n,m+1) \textrm{ is indecomposable})$.\\
\item More generally, $C(\bs(n,m+1))$ is stochastically dominated by $C(\bs(n,m))$, where $C(\bs)$ denotes the number of blocks in $\bs$, that is,
\[
\bbP[C(\bs(n,m+1))\geq j]\le \bbP[C(\bs(n,m))\geq j],\quad\forall\,j\in [n].
\]
\end{enumerate}
\end{thm}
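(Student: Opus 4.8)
The idea is to extract everything from the Markov process of Theorem~\ref{rhoexists} plus one monotonicity observation about the ``covering'' relation on inversion sequences. Recall from the discussion after Definition~\ref{decompositionpoint} that a permutation $\bs$ of $[n]$ is decomposable at $j$ exactly when its inversion sequence $\x=\x(\bs)$ is, and that the decomposition points of $\x$ are precisely the internal block boundaries $k_1<\dots<k_{l-1}$ in the decomposition $\bs=(\bs^1,\dots,\bs^l)$. Writing $D(\x)$ for the set of decomposition points of $\x$, we therefore have $C(\bs)=|D(\x)|+1$, so it suffices to control $|D(\cdot)|$ along the process.

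The key step is the claim: \emph{if $\y$ covers $\x$, then $D(\y)\subseteq D(\x)$.} Suppose $y_q=x_q+1$ and $y_p=x_p$ for $p\neq q$, and let $j\in D(\y)$, so by Definition~\ref{decompositionpoint} $y_p\le (p-j)-1$ for every $p>j$. If $q\le j$, then $x_p=y_p$ for all $p>j$, so $j\in D(\x)$. If $q>j$, then $x_p=y_p\le(p-j)-1$ for $p>j$, $p\neq q$, while $x_q=y_q-1\le(q-j)-1$; hence again $j\in D(\x)$. (Informally: adding $1$ to a coordinate can only make the defining inequalities of a decomposition point lying to its left \emph{harder} to satisfy, and leaves those of a decomposition point lying to its right untouched, so a covering step destroys decomposition points but never creates one.) In particular $|D(\y)|\le|D(\x)|$, i.e.\ $C$ can only weakly decrease under a covering step.

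Now fix $n$ and $m$ with $0\le m\le\binom n2-1$, let $\rho_{n,m}$ be the stochastic matrix furnished by Theorem~\ref{rhoexists}, take $\bX=\bX(n,m)$ uniform on $\mcal X(n,m)$, and let $\bY=\bX(n,m+1)$ be obtained from $\bX$ by one step of $\rho_{n,m}$. By the uniformity-preservation condition \eqref{s2:e2}, $\bY$ is uniform on $\mcal X(n,m+1)$, so (via the bijection $\mcal S\leftrightarrow\mcal X$) $C(\bY)$ has the same law as $C(\bs(n,m+1))$ and $C(\bX)$ the same law as $C(\bs(n,m))$; by condition \eqref{eq: third condition}, $\bY$ covers $\bX$ almost surely, so the claim gives $C(\bY)\le C(\bX)$ pointwise. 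Hence for every $j\in[n]$,
\[
\bbP[C(\bs(n,m+1))\ge j]=\bbP[C(\bY)\ge j]\le\bbP[C(\bX)\ge j]=\bbP[C(\bs(n,m))\ge j],
\]
which is part (ii) (the range $0\le m\le\binom n2-1$ is exactly where both permutations are defined and $\rho_{n,m}$ exists). Part (i) is the case $j=2$: since $\bs$ is indecomposable iff $C(\bs)=1$, we get $\bbP(\bs(n,m)\text{ indecomposable})=1-\bbP[C(\bs(n,m))\ge 2]\le 1-\bbP[C(\bs(n,m+1))\ge 2]=\bbP(\bs(n,m+1)\text{ indecomposable})$.

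There is no real obstacle once Theorem~\ref{rhoexists} is available: the whole content is the inclusion $D(\y)\subseteq D(\x)$ for a covering pair, which is the one point worth stating carefully (one might a priori fear that perturbing the inversion sequence creates a new block boundary, but the direction of the inequality in Definition~\ref{decompositionpoint} forbids this). The remainder is the standard fact that an almost-sure inequality between two random variables passes to stochastic domination of their laws.
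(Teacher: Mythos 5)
Your proposal is correct and follows essentially the same route as the paper: couple $\bs(n,m)$ and $\bs(n,m+1)$ as consecutive snapshots of the Markov process of Theorem~\ref{rhoexists}, observe that a covering step can only destroy (never create) decomposition points, and read off stochastic domination from the resulting pointwise inequality $C(\bs(n,m+1))\le C(\bs(n,m))$. The only difference is presentational — you spell out the case analysis ($q\le j$ versus $q>j$) behind the inclusion $D(\y)\subseteq D(\x)$ and derive (i) as the special case $j=2$ of (ii), whereas the paper states the containment in words and says the proof of (ii) is ``similar.''
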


\begin{proof}
We couple the two random permutations $\bs(n,m)$ and $\bs(n,m+1)$ by considering their inversion sequences as two, consecutive, snapshots of the process described above. A  permutation $\boldsymbol \tau$ of $[n]$ is decomposable if and only if there exists some $j\in [n-1]$ such that $\boldsymbol \tau$'s inversion sequence is decomposable at $j$. Note that the inversion sequence of $\bs(n,m+1)$ covers the inversion sequence of $\bs(n,m)$, i.e., they agree at every position except a single $k\in [n]$, where $x_k(\bs(n,m+1))=x_k(\bs(n,m))+1$.  Consequently, if $\bs(n,m+1)$ is decomposable at some position $j$, then so is $\bs(n,m)$. This finishes the proof of $(i)$. The proof of
$(ii)$ is similar.
\end{proof}

\section{Phase Transition}

Our goal in this section is to identify a threshold value $m_n$ for $\bs(n,m)$
to become indecomposable with high probability (whp), i.e., with probability approaching 
$1$ as $n$ tends to infinity. In the rest of the paper, $\alpha$ denotes the fraction $m/n$.

\begin{thm}\label{main} Let  $C(\bs(n,m))$ denote the number of components of $G_{\bs(n,m)}$.
Let 
\begin{equation}\label{m/n=}
\al= \frac{6}{\pi^2}\left( \log n +\frac{1}{2}\log\log n+ \frac{1}{2}\log (12/\pi)-\frac{\pi^2}{12}+\mu_n\right).
\end{equation}
If $|\mu_n|=o(\log\log\log n)$ as $n \to \infty$,  then
$C(\bs(n,m))-1$ is asymptotic in distribution, to a Poisson random variable with mean $e^{-\mu_n}$.
More precisely, denoting by $d_{TV}(X,Y)$ the total variation distance between the distributions
of $X$ and $Y$,
\[
d_{\textup{TV}}\bigl[C(\bs(n,m))-1,\textup{Poisson}(\lambda_n)\bigr]\le (\log n)^{-1+\varepsilon},\quad\forall\,
\varepsilon>0,
\]
where
\[	\lambda_n= n\prod_{j=1}^{\infty} \left( 1-\frac{\alpha^j}{(\alpha+1)^j}\right) =\exp\left[ -\mu_n+O\left(\frac{\log\log n}{\log n}\right)\right].		\]
\end{thm}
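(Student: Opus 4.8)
The plan is to approach this via the Chen–Stein method for Poisson approximation, which is the natural tool for counting rare, weakly-dependent events. Recall from the preliminaries that $\bs(n,m)$ is decomposable at $j$ precisely when its inversion sequence $\bX=\bX(n,m)$ satisfies $X_{j+i}\le i-1$ for all $i\in[n-j]$, and that the number of components $C(\bs(n,m))$ equals $1$ plus the number of decomposition points $j\in[n-1]$. So I would set $W=\sum_{j=1}^{n-1} I_j$, where $I_j=\mathbf 1\{\bX \text{ is decomposable at } j\}$, and aim to show $W$ is asymptotically $\text{Poisson}(\lambda_n)$ with $\lambda_n=\sum_j \bbP(I_j=1)$, then identify $\lambda_n$ with the stated product. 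The first substantial step is therefore to compute $\bbP(I_j=1)=\bbP(X_{j+i}\le i-1,\ 1\le i\le n-j)$ under the uniform measure on $\mcal X(n,m)$. Because the uniform distribution on $\mcal X(n,m)$ is the conditioning of the independent family $\{X_i\}$ ($X_i$ uniform on $\{0,\dots,i-1\}$, as in the remark after \eqref{Snx=}) on the event $\|\bX\|=m$, this probability is a ratio of the generating-function coefficients: roughly $\bbP(I_j=1)=\frac{1}{s(n,m)}\,[x^m]\,\mcal S_j(x)\cdot\prod_{i=1}^{n-j}(1+x+\cdots+x^{i-1})$. The dominant contribution comes from the regime where the "tail" block absorbs $O(n)$ inversions and the "head" absorbs $m-O(n)$; a saddle-point / local-limit analysis of $s(n,m)$ (available from the cited works of Bender, Clark, Louchard–Prodinger, Margolius) lets one evaluate this ratio and extract the leading asymptotics, with $\alpha=m/n$ playing the role of the saddle point $x/(1-x)\approx\alpha$, which is exactly why $\alpha^i/(\alpha+1)^i$ appears.

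Next I would assemble $\lambda_n$. Summing $\bbP(I_j=1)$ over $j$, the contributions are essentially translation-invariant in $j$ away from the endpoints (the head block is large and its effect on the ratio is insensitive to its exact length), so $\lambda_n\sim (n-O(1))\cdot\prod_{i\ge 1}\bigl(1-\alpha^i/(\alpha+1)^i\bigr)$, giving the claimed formula $\lambda_n=n\prod_{j\ge1}(1-\alpha^j/(\alpha+1)^j)$ up to negligible boundary terms. Plugging in \eqref{m/n=} and using $-\log\prod_{j\ge1}(1-q^j)\sim \frac{\pi^2}{6}\cdot\frac{q}{1-q}$ as $q\to1^-$ with $q=\alpha/(\alpha+1)$, so that $\frac{q}{1-q}=\alpha$, a direct (if delicate) expansion of $\log\lambda_n=\log n+\sum_{j\ge1}\log(1-q^j)$ reproduces $-\mu_n+O(\log\log n/\log n)$; this is the routine-but-careful calculation I would not grind through here, but it is the reason the precise lower-order terms $\frac12\log\log n+\frac12\log(12/\pi)-\pi^2/12$ appear in $\alpha$.

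For the Poisson approximation itself I would invoke the Chen–Stein bound: $d_{\mathrm{TV}}(W,\text{Poisson}(\lambda_n))\le b_1+b_2+b_3$, where with a neighborhood structure $B_j=\{j'\!:|j'-j|\le L\}$ for a suitable window $L$ (say $L=\text{polylog}\,n$), $b_1=\sum_j\sum_{j'\in B_j}\bbP(I_j)\bbP(I_{j'})$, $b_2=\sum_j\sum_{j'\in B_j,\,j'\ne j}\bbP(I_jI_{j'})$, and $b_3=\sum_j \bbE\bigl|\bbE[I_j-\bbP(I_j)\mid (I_{j'})_{j'\notin B_j}]\bigr|$. Since each $\bbP(I_j)=\Theta(\lambda_n/n)=\Theta((\log n)^{-?}/\!\cdots)$ — more precisely $\bbP(I_j)\asymp 1/n$ up to the factor making $\lambda_n$ bounded — we get $b_1\lesssim L\lambda_n^2/n\to0$. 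For $b_2$ one shows a pair of nearby decomposition points forces the intervening block to be decomposable too, and the joint probability is $O(1/n^2)$ times a convergent factor, summing to $o(1)$. The term $b_3$ is the one requiring the conditioning structure: far-apart decomposition events are nearly independent because they constrain disjoint coordinate blocks of $\bX$, and the only coupling is through the global constraint $\|\bX\|=m$; controlling it amounts to a local-limit estimate showing $\bbP(\|\bX\|=m\mid \text{block }[j+1,n]\text{ has given sum})$ varies by a $1+o(1)$ factor.

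The main obstacle, as I see it, is precisely this last point combined with the sharpness of the error bound $(\log n)^{-1+\varepsilon}$: one needs not just that $\bbP(I_j)\sim c/n$ but quite precise control of the generating-function ratio uniformly in $j$ and of the dependence between blocks, i.e. a good local central limit theorem for $\|\bX\|$ with explicit error terms, strong enough that after summing $O(n)$ terms the accumulated error is still $o(1/\log n)$. Establishing this uniform local-limit estimate for $s(n,m)$ in the relevant near-threshold window $m\sim(6/\pi^2)n\log n$ — where $\alpha=m/n\to\infty$ slowly — is the technical heart of the argument; everything else is bookkeeping around the Chen–Stein inequality.
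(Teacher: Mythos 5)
Your proposal takes a genuinely different route (Chen--Stein plus a local limit theorem for $s(n,m)$, versus the paper's Bonferroni / binomial-moments method after two reductions), but as written it contains a gap that would derail the Chen--Stein bookkeeping.

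The problem is that the decomposability indicator $I_j$ is \emph{not} a local function of $\bX$. The event $\{I_j=1\}$ is $\{X_{j+i}\le i-1 \text{ for all } i\in[n-j]\}$, which involves every coordinate $X_{j+1},\dots,X_n$. In particular, for $j<j'$ the events $I_j$ and $I_{j'}$ both depend on the entire suffix $X_{j'+1},\dots,X_n$, so the statement that ``far-apart decomposition events are nearly independent because they constrain disjoint coordinate blocks'' is false. With the neighborhood $B_j=\{j':|j'-j|\le L\}$, $L=\text{polylog}\,n$, the indicators $I_{j'}$ outside $B_j$ are not approximately independent of $I_j$, so $b_3$ does not become small by locality alone; you would have to control the coupling through the whole suffix, not merely through the global sum constraint $\|\bX\|=m$. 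The paper's essential extra idea is a localization step: Definition~\ref{markedpointdef} replaces ``decomposable at $j$'' by ``marked at $j$'', which only checks $Y_{j+t}\le t-1$ for $t\le\nu$ with $\nu=\lceil 2(\alpha+1)\log n\rceil$, a window of size $O(\log^2 n)$. Proposition~\ref{decomposition vs marked} shows the set of decomposition points coincides with the set of marked points whp, and only after this replacement does one have a family of genuinely short-range indicators suitable for any Poisson-approximation machinery (moment method or Chen--Stein alike). Without an analogous truncation your $b_3$ estimate cannot be salvaged.

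Secondly, the paper avoids the local-CLT route entirely, and it is worth noting why. Lemma~\ref{threesets} shows that, after conditioning on the first $\nu$ coordinates, the tail of $\bX$ (which lives on $\mathcal{Y}_2$, compositions of $m-a$ with part bounds) can be replaced, at total-variation cost $O(n^{-1})$, by a uniform unconstrained composition in $\mathcal{Y}_1$. On $\mathcal{Y}_1$ every finite-dimensional probability is an explicit ratio of binomial coefficients, and Lemma~\ref{Geom} and Corollary~\ref{ProbY} turn these into near-products of geometrics with controlled multiplicative error. This is what makes the explicit error rate $(\log n)^{-1+\varepsilon}$ achievable: the binomial moments in Lemma~\ref{factorialmoments} can be computed with $1+O(\ell^2/(nh\log n))$ accuracy and then fed into Bonferroni's inequalities. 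Your sketch would instead require a uniform local limit theorem for $s(n,m)$ in the window $m\sim(6/\pi^2)n\log n$ with error terms fine enough that $O(n)$ of them sum to $o(1/\log n)$; the cited asymptotic works on $s(n,m)$ are not stated at that precision, so that step would itself be a substantial piece of work, not a citation. The $\mathcal{Y}_1$ reduction is the paper's way of making that entire analytic step unnecessary.
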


\tb{Note:} Combining Theorem ~\ref{main} and Theorem ~\ref{Monotonicity}, we conclude:
(i) Let $\mu_n\to \infty$ in \eqref{m/n=};  then whp $C(\bs(n,m))=1$, i.e., $\bs(n,m)$ is indecomposable, whp.  (ii) Let $\mu_n\to -\infty$; then $C(\bs(n,m))\to \infty$, in
probability; that is $\bs(n,m)$ is highly decomposable, whp.  Thus $m_n:=
(6n/\pi^2)\log n$ is a sharp threshold for transition of $\bs(n,m)$ from being decomposable to
being indecomposable, whp.

The proof of Theorem \ref{main} is a culmination point of a series of intermediate claims.

Let $\bs\in S_n$ and let $\bs= (\bs^1,\bs^2,\dots, \bs^l)$ be its decomposition into indecomposable blocks.  The inversion sequence of $\bs$ is  the concatenation of the inversion sequences of $\bs^i$'s, i.e., we have  $\boldsymbol x (\bs) = \boldsymbol x (\bs^1),\boldsymbol x (\bs^2),\dots, \boldsymbol x(\bs^t)$. Using this decomposition of $\bs$  we define the function 
\[ \psi= \psi_n : S_n \longrightarrow S_n \]
such that the inversion sequence of the permutation $\psi(\bs)$ is given by $\bX(\bs^t)\bX(\bs^{t-1})\dots \bX(\bs^1)$, i.e., we have 
\be	\label{s3:e8}
\bX(\psi(\bs))= \bX(\bs^t)\bX(\bs^{t-1})\dots \bX(\sigma^1).	
\ee
Note that the function $\psi$ is a bijection from $S_n$ onto itself. Indeed, the inverse of $\psi$ is itself. It follows directly from the construction of the function that the number of inversions is invariant under $\psi$, so $\psi$ is a bijection from $\mathcal{S}(n,m)$ onto itself. In
particular, $\psi(\bs(n,m))$ is distributed uniformly on $\mathcal{S}(n,m)$.

We begin with  some symmetry results.
Let $A_i=\{\bs\,:\, \bs\text{ is decomposable at the position }i\}$.
\begin{lemma}\label{symmetry}
Let $r$ be a positive integer, and let $1\leq i_1< i_2< \cdots < i_r \leq n-1$. Then, 
\[	\bbP \bigg( \displaystyle \bigcap \limits _{j=1}^r A_{i_j}\bigg)	= \bbP\bigg( \displaystyle \bigcap \limits _{j=1}^r A_{n-i_j}\bigg)  \]
\end{lemma}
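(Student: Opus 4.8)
The plan is to deduce this simultaneous symmetry from a single measure‑preserving bijection of $\mathcal{S}(n,m)$, namely the block‑reversing involution $\psi$ introduced just above. First I would pin down the set of decomposition positions of an arbitrary $\bs$: writing $\bs=(\bs^1,\dots,\bs^l)$ for its decomposition into indecomposable blocks, $b_i:=|\bs^i|$, and $k_u:=b_1+\cdots+b_u$ (so $k_0=0$, $k_l=n$), the permutation $\bs$ is decomposable at position $i$ exactly when $i\in\{k_1,\dots,k_{l-1}\}$. By \eqref{s3:e8} the inversion sequence of $\psi(\bs)$ is $\bX(\bs^l)\bX(\bs^{l-1})\cdots\bX(\bs^1)$, so $\psi(\bs)$ is the permutation whose indecomposable blocks are $\bs^l,\bs^{l-1},\dots,\bs^1$ (each relabelled onto the appropriate consecutive interval), with block sizes $b_l,b_{l-1},\dots,b_1$; hence its decomposition positions form the set $\{b_l,\ b_l+b_{l-1},\ \dots,\ b_l+\cdots+b_2\}=\{n-k_{l-1},\,n-k_{l-2},\,\dots,\,n-k_1\}$. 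The net effect is the equivalence $\bs\in A_i\iff\psi(\bs)\in A_{n-i}$ for every $1\le i\le n-1$.

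The one point needing an actual argument is the claim just used, that a concatenation $\bX(\pi^1)\cdots\bX(\pi^q)$ of inversion sequences of indecomposable permutations (in any order) has decomposition positions exactly at the block boundaries. For "every boundary is a decomposition position" I would appeal directly to Definition \ref{decompositionpoint} (via the equivalence, noted in the text, between a permutation being decomposable at $j$ and its inversion sequence being decomposable at $j$): past a boundary at position $j$, the tail is again such a concatenation, and its $i$‑th entry is at most (its index within its own block) $-1\le i-1$, which is precisely the inequality the definition demands; for "nothing else is a decomposition position", a decomposition position lying strictly inside the $a$‑th block would, by restriction, produce an interior decomposition position of the indecomposable $\pi^a$, a contradiction. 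I expect this little structural lemma about concatenations of inversion sequences to be the only mildly technical step — it is immediate from Definition \ref{decompositionpoint}, but is worth isolating cleanly since the equivalence $\bs\in A_i\iff\psi(\bs)\in A_{n-i}$ rests on it.

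With the equivalence in hand, $\psi$ restricts to a bijection of $\bigcap_{j=1}^{r} A_{i_j}$ onto $\bigcap_{j=1}^{r} A_{n-i_j}$. Since $\psi$ is a bijection of $\mathcal{S}(n,m)$ onto itself — indeed its own inverse — and $\bs(n,m)$ is uniform on $\mathcal{S}(n,m)$ (so $\psi(\bs(n,m))$ is too, as already observed), $\psi$ preserves the relevant probability measure, and therefore $\bbP\bigl(\bigcap_{j=1}^{r} A_{i_j}\bigr)=\bbP\bigl(\bigcap_{j=1}^{r} A_{n-i_j}\bigr)$. The special case $r=1$ is just the statement that $A_i$ and $A_{n-i}$ are equiprobable, reflecting the left–right symmetry of block decompositions; the point of routing the general statement through the single involution $\psi$ is that one map handles all $r$ constraints simultaneously.
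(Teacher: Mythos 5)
Your proof is correct and follows the same route as the paper's: the paper likewise derives the claim from the fact (via equation \eqref{s3:e8}) that $\bs\in\bigcap_j A_{i_j}$ iff $\psi(\bs)\in\bigcap_j A_{n-i_j}$, together with $\psi$ being a self-inverse bijection of $\mathcal{S}(n,m)$. You simply spell out in full the structural fact about block boundaries of a concatenation of inversion sequences, which the paper leaves implicit.
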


\begin{proof}
It follows from equation ~\eqref{s3:e8} that  $\bs \in \displaystyle \bigcap \limits _{j=1}^s A_{i_j}$, 
if and only if $\psi(\bs) \in \displaystyle \bigcap \limits _{j=1}^s A_{n-i_j}$.
\end{proof}

\begin{cor}\label{EventA}
For an integer $1\leq k \leq \frac{n}{2}$, we have 
\be	\label{s3:e9}
\bbP\bigg( \displaystyle \bigcup \limits _{j=1}^k A_{j}\bigg)	= \bbP\bigg( \displaystyle \bigcup \limits _{j=1}^k A_{n-j}\bigg).		
\ee
\end{cor}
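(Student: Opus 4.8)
The plan is to derive \eqref{s3:e9} from Lemma~\ref{symmetry} by the inclusion--exclusion principle. First I would expand each side. By inclusion--exclusion,
\[
\bbP\Bigl(\bigcup_{j=1}^{k}A_{j}\Bigr)=\sum_{\emptyset\neq S\subseteq[k]}(-1)^{|S|+1}\,\bbP\Bigl(\bigcap_{j\in S}A_{j}\Bigr),
\]
while, since $\bigcup_{j=1}^{k}A_{n-j}=\bigcup_{i=n-k}^{n-1}A_{i}$,
\[
\bbP\Bigl(\bigcup_{j=1}^{k}A_{n-j}\Bigr)=\sum_{\emptyset\neq T\subseteq\{n-k,\dots,n-1\}}(-1)^{|T|+1}\,\bbP\Bigl(\bigcap_{i\in T}A_{i}\Bigr).
\]

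Next I would match the two sums via the reflection $j\mapsto n-j$. Because $k\le n/2$, this map sends $[k]$ injectively into $\{n-k,\dots,n-1\}\subseteq[n-1]$, so $S\mapsto T:=\{\,n-j:j\in S\,\}$ is a cardinality-preserving bijection from the nonempty subsets of $[k]$ onto the nonempty subsets of $\{n-k,\dots,n-1\}$; in particular $(-1)^{|S|+1}=(-1)^{|T|+1}$. For each nonempty $S=\{i_1<\dots<i_r\}\subseteq[k]$ the indices $n-i_1>\dots>n-i_r$ are distinct and lie in $[n-1]$, so Lemma~\ref{symmetry} (whose statement is insensitive to the order in which the indices are listed) gives
\[
\bbP\Bigl(\bigcap_{j\in S}A_{j}\Bigr)=\bbP\Bigl(\bigcap_{j\in S}A_{n-j}\Bigr)=\bbP\Bigl(\bigcap_{i\in T}A_{i}\Bigr).
\]
Substituting this into the first display and re-indexing the sum by $T$ via the bijection yields exactly the second display, which is \eqref{s3:e9}.

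There is no real obstacle here; the one point to watch is the hypothesis $k\le n/2$, which is precisely what keeps the reflected indices $n-j$ distinct and inside the admissible range $[1,n-1]$ so that Lemma~\ref{symmetry} applies termwise. When $n$ is even and $k=n/2$, some of these indices are fixed by the reflection, but then the corresponding terms on the two sides coincide verbatim, so nothing changes.
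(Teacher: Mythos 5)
Your proof is correct and follows the same route as the paper: the paper's own proof is the one-line remark that the claim follows from Lemma~\ref{symmetry} together with inclusion--exclusion applied to both sides, which is exactly the argument you spell out. One minor quibble: the injectivity of $j\mapsto n-j$ on $[k]$ and the containment $\{n-k,\dots,n-1\}\subseteq[n-1]$ hold for any $k\le n-1$, not just $k\le n/2$, so the hypothesis $k\le n/2$ is not what makes the termwise application of Lemma~\ref{symmetry} legitimate; the corollary is simply stated in the range the paper later needs.
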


\begin{proof}
The claim follows from Lemma \ref{symmetry} and the inclusion-exclusion formula
applied separately to both sides of  \eqref{s3:e9}.
\end{proof}

To proceed,  define $\al:=\frac{m}{n}$ and introduce $I:= \big[ 0.5 \log n,\log n\big]$.
Unless otherwise stated, we will assume that $\al\in I$.  Equivalently, the number of inversions $m$ lies in the interval $\big[ 0.5 n\log n, n\log n\big]$ unless otherwise stated.

Introduce $\bX=X_1\dots X_n=\boldsymbol x(\bs(n,m))$, the inversion sequence of $\bs(n,m)$. In
view of bijection between $\mathcal S(n,m)$ and $\mathcal X(n,m)$, the set of sequences 
$\boldsymbol x$ meeting the constraints ~\eqref{s1:e1}--\eqref{s1:e2} from Section ~\ref{Intro},
$\boldsymbol X$ is uniformly distributed on $\mathcal X(n,m)$. 

Next  we define $\nu=\lc  2 (\alpha+1) \log n \rc$ and introduce $\boldsymbol X^{\nu}=
X_{\nu+1}\dots X_n$, a tail of $\boldsymbol X$. We want to show that whp the number of (indecomposable) blocks in $\boldsymbol X^{\nu}$ is the same as that in $\boldsymbol X$ itself. Let $a_1,a_2,\ldots,a_{\nu}$ be integers such that $0\leq a_i \leq i-1$ for $i=1,2,\dots,\nu$, and let $a:=a_1+\cdots +  a_{\nu}$; in
particular, $a\leq \binom{\nu}{2}\ll m$.  Let $X_i=a_i$ for $1\leq i\leq \nu$. Then $Y_i:=X_{\nu+i}$
satisfy the constraints $0\le Y_i\le \nu+i-1.$
Introduce the following three sets of sequences $\boldsymbol y=y_1\dots y_{n-\nu}$:
\begin{align*}
\mathcal{Y}_1= \mathcal{Y}_1(\boldsymbol a) &:= \left\{ \boldsymbol y: \dsum_{i=1}^{n-\nu}y_i=m-a,\  0\leq y_i, \ \forall i \right\},	\\
\mathcal{Y}_2= \mathcal{Y}_2(\boldsymbol a) &:= \left\{ \boldsymbol y: \dsum_{i=1}^{n-\nu}y_i=m-a, \  0\leq y_i \leq \nu+i-1, \  \forall i \right\},	\\
\mathcal{Y}_3= \mathcal{Y}_3(\boldsymbol a) &:= \left\{ \boldsymbol y: \dsum_{i=1}^{n-\nu}y_i=m-a, \ 0\leq y_i \leq \nu-1, \  \forall i \right\}.
\end{align*}
From the definition, it is clear that $\mathcal{Y}_1 \supset \mathcal{Y}_2 \supset \mathcal{Y}_3$. Note that, if we take a sequence from $\mathcal{Y}_2$ and append it to $\boldsymbol a$ from the right,  then we get an inversion sequence $\boldsymbol x\in \mathcal X(n,m)$. Conversely, any inversion sequence in $\mathcal X(n,m)$ starting with $\boldsymbol a$ ends with a sequence $\boldsymbol y\in \mathcal{Y}_2$. 

\begin{lemma}\label{threesets}
Uniformly for $\boldsymbol a$ as defined above,
\[ 		\frac{|\mathcal{Y}_3|}{|\mathcal{Y}_1|}=1-O(n^{-1}).		\]
Thus almost all $\boldsymbol y\in\mathcal{Y}_1$ are in $\mathcal{Y}_3$.
\end{lemma}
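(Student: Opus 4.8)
The statement compares the sizes of three sets of nonnegative integer sequences $\boldsymbol y=y_1\dots y_{n-\nu}$ with prescribed sum $m-a$, differing only in their upper bounds: unbounded ($\mathcal Y_1$), bounded by $\nu+i-1$ ($\mathcal Y_2$), and uniformly bounded by $\nu-1$ ($\mathcal Y_3$). Since $\mathcal Y_3\subset\mathcal Y_1$, it suffices to bound $|\mathcal Y_1\setminus\mathcal Y_3|/|\mathcal Y_1|$, i.e.\ the probability that a uniformly random element of $\mathcal Y_1$ has some coordinate $\ge\nu$. I would compute $|\mathcal Y_1|$ exactly as a composition count, $|\mathcal Y_1|=\binom{m-a+n-\nu-1}{n-\nu-1}$, and then estimate the number of ``bad'' sequences --- those with $y_i\ge\nu$ for at least one $i$ --- by a union bound over the index $i$.

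\emph{Key steps.} First, for a fixed index $i$, the sequences in $\mathcal Y_1$ with $y_i\ge\nu$ are counted by substituting $y_i'=y_i-\nu\ge0$, giving $\binom{m-a-\nu+n-\nu-1}{n-\nu-1}$. Hence
\[
\frac{|\mathcal Y_1\setminus\mathcal Y_3|}{|\mathcal Y_1|}\le (n-\nu)\cdot\frac{\binom{m-a-\nu+n-\nu-1}{n-\nu-1}}{\binom{m-a+n-\nu-1}{n-\nu-1}}
= (n-\nu)\prod_{t=0}^{\nu-1}\frac{m-a-t}{m-a+n-\nu-1-t}.
\]
Each factor in the product is at most $\dfrac{m-a}{m-a+n-\nu-1}\le\Bigl(1+\dfrac{n-\nu-1}{m-a}\Bigr)^{-1}$, so the product is at most $\exp\!\bigl(-\nu(n-\nu-1)/(2(m-a))\bigr)$ once $\nu(n-\nu-1)/(m-a)$ is, say, below $1$ (using $1+x\ge e^{x/2}$ for small $x$; here $m-a\sim m=\alpha n$ and $\nu\asymp\log n$, so the exponent is of order $\nu/\alpha\asymp\log n/\alpha\ge\log n/\log n$, actually growing). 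The point is that $\nu=\lceil 2(\alpha+1)\log n\rceil$ was chosen precisely so that $\nu/\alpha\ge 2\log n$ asymptotically, whence the product is $\le e^{-\log n}=n^{-1}$ up to lower-order corrections, and the prefactor $n-\nu\le n$ is absorbed --- one gets $O(1)$; to clear the extra factor of $n$ one notes $\nu(n-\nu-1)/(m-a)=(2+o(1))\log n\cdot(1+o(1))$, which gives the product $\le n^{-2+o(1)}$, so after multiplying by $n-\nu$ the ratio is $O(n^{-1})$. The uniformity over $\boldsymbol a$ is automatic because the only dependence on $\boldsymbol a$ is through $m-a$, and $a\le\binom{\nu}{2}=O((\log n)^2)=o(m)$, so all the above estimates hold with constants not depending on $\boldsymbol a$.

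\emph{Main obstacle.} The only delicate point is bookkeeping the constants in the exponent: one must verify that $\nu(n-\nu-1)/(m-a)\ge (2-o(1))\log n$ using $\alpha\in I=[0.5\log n,\log n]$ and $\nu=\lceil 2(\alpha+1)\log n\rceil$, so that $\nu\ge 2\alpha\log n$ and thus $\nu/(m-a)\ge \nu/(\alpha n)(1+o(1))\ge (2\log n/n)(1+o(1))$, giving the bound $(n-\nu)\cdot e^{-(2-o(1))\log n}=O(n^{-1})$. One also has to be slightly careful that $m-a-\nu\ge 0$ so the binomial identities are valid, which holds since $m\asymp n\log n\gg\nu$. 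None of this is conceptually hard; it is a routine union-bound-plus-tail-estimate argument, and the choice of $\nu$ is exactly what makes it work with room to spare.
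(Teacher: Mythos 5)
Your strategy is the same as the paper's: a union bound over coordinates $i$ with $y_i\ge\nu$, a composition count for the per-coordinate probability, and an exponential tail bound for the binomial ratio. (The paper phrases the union bound as linearity of expectation of the count $B$ of oversized coordinates; yours is equivalent, and your product $\prod_{t=0}^{\nu-1}(m-a-t)/(m-a+n-\nu-1-t)$ agrees with the paper's falling-factorial ratio.)

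However the final step as written does not deliver $O(n^{-1})$. Using $1/(1+x)\le e^{-x/2}$ costs a factor of $2$ in the exponent: since $\nu(n-\nu-1)/(m-a)\sim 2\log n$, you obtain only $n\cdot e^{-\log n}=O(1)$, which you acknowledge. Your fix --- ``$\nu(n-\nu-1)/(m-a)=(2+o(1))\log n$, so the product is $\le n^{-2+o(1)}$, hence the ratio is $O(n^{-1})$'' --- is a non sequitur: $n^{-2+o(1)}\cdot n=n^{-1+o(1)}$, which is $O(n^{-1})$ only if the $o(1)$ is in fact $O(1/\log n)$. It is (one has $\nu/\alpha=2\log n+2\log n/\alpha+O(1/\alpha)=2\log n+O(1)$ for $\alpha\in[\tfrac12\log n,\log n]$, and the corrections from $a$ and from $\nu+1$ are $O((\log n)^4/n)$), but you never verify this, and it is exactly where the constant in $O(n^{-1})$ comes from. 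To avoid the detour altogether, either use the tight inequality $1/(1+x)\le e^{-x+x^2/2}$, noting that the accumulated error $\nu\cdot\bigl((n-\nu-1)/(m-a)\bigr)^2/2=O(1)$ is harmless, or factor the ratio as the paper does, $\prod_{i=0}^{n-\nu-2}\bigl(1-\tfrac{\nu}{A-i}\bigr)\le\bigl(1-\tfrac{\nu}{A}\bigr)^{n-\nu-1}\le e^{-\nu(n-\nu-1)/A}$ with $A=m-a+n-\nu-1$, which loses nothing. Two further slips worth fixing: $\nu\asymp(\log n)^2$, not $\log n$ (since $\alpha\asymp\log n$); and the quantity that must be small for the $e^{-x/2}$ step is $x=(n-\nu-1)/(m-a)=O(1/\log n)$, not $\nu(n-\nu-1)/(m-a)$, which is $\sim 2\log n\to\infty$.
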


{\bf Note.\/} $\mathcal{Y}_1$ is just  the set of all compositions of $m-a$ into $n-\nu$ 
non-negative integer parts, 
and as such it is considerably easier to deal with than $\mathcal{Y}_2$, the set of all
tails of the inversion sequences $\boldsymbol x$, with $\boldsymbol a$ being their  first $\nu$ components.
And, denoting the uniform probability measure on $\mathcal{Y}_i$ by $\bbP_{\mathcal{Y}_i}$,
for any set $Q$ of compositions of $m-a$, we have
\[
\frac{|Q\cap \mathcal{Y}_1|}{|\mathcal{Y}_1|}=\frac{|Q\cap \mathcal{Y}_2|}{|\mathcal{Y}_2|}
\cdot\frac{|\mathcal{Y}_2|}{|\mathcal{Y}_1|}+\frac{|Q\cap (\mathcal{Y}_1\setminus 
\mathcal{Y}_2)|}{|\mathcal{Y}_1|}.
\]
So, by Lemma \ref{threesets},
\begin{equation}\label{PY1=PY2}
\bigl|\bbP_{\mathcal{Y}_1}(\boldsymbol Y\in Q)-\bbP_{\mathcal{Y}_2}(\boldsymbol Y\in Q)\bigr|=O(n^{-1}),
\end{equation}
uniformly for all $Q$.
\begin{proof}[Proof of Lemma \ref{threesets}]
Let $\boldsymbol Y=Y_1\dots Y_{n-\nu}$ be chosen uniformly at random from $\mathcal{Y}_1$. Let 
\[
B=\#\{1\le i\le n-\nu: Y_i\ge \nu\}.
\]
Since $\bbP_{\mathcal{Y}_1}(B=0)=|\mathcal{Y}_3|/|\mathcal{Y}_1|$, 
we need to prove $\bbP_{\mathcal{Y}_1}(B=0)= 1-O(n^{-1})$ as $n\to\infty$.  It is enough to show that 
$E_{\mathcal{Y}_1}[B] =O(n^{-1})$ as $n \to \infty$. Since the number of compositions of an integer $\mu$ into $\ell$ non-negative integer 
parts is $\binom{\mu+\ell-1}{\ell-1}=(\mu+\ell-1)_{\ell-1}/(\ell-1)!$, we compute
{\allowdisplaybreaks
\begin{align*}
E_{\mathcal{Y}_1}[B] 	&= (n-\nu)\, \bbP_{\mathcal{Y}_1}( Y_1 \geq \nu) \leq n\bbP_{\mathcal{Y}_1}( Y_1 \geq \nu)\\
	& = n\, \dfrac{(m-a-\nu+n-\nu-1)_{n-\nu-1}}{(m-a+n-\nu-1)_{ n-\nu-1}} \\
	& \leq n \left( \frac{m-a+n-2\nu-1}{m-a+n-\nu-1}\right) ^{n-\nu-1} 
	 \leq n\left(1- \frac{\nu}{m-a+n-\nu-1}\right)^{n-\nu-1}\\ 
	&\leq n\,\exp\left(-\frac{\nu(n-\nu-1)}{m-a+n-\nu}\right)=n\exp\left(-\frac{\nu n}{m}\bigl(1+O(1/\log n)    \bigr)\right)\\
	&=  \exp\left(\log n -2(\log n)\frac{m}{n} \frac{n}{m}+O(1)\right)=O(n^{-1}). \qedhere
	\end{align*}
}
\end{proof}

In light of Lemma \ref{threesets} and the note following it, we focus on the asymptotic
properties of $\bbP_{\mathcal{Y}_1}$. In what follows, we use the notation 
\[	q:=\frac{\alpha}{\alpha+1}.		\]
For the magnitudes of the quantities we have defined so far see Figure \ref{figure3}.
\begin{figure}\caption*{} 
\begin{center}
\begin{tabular}{|l|ll|}
\hline
Quantity              & Order &	(as $n \to \infty$)			\\
\hline
$m$ 					& 	$\Theta(n\log n)$  		&		\\
$\alpha=m/n$           		& 	$\Theta(\log n)$  		&		\\
$\nu\sim 2\alpha\log n$   	&	$\Theta(\log^2 n)$ 	&		\\
$a$						&	$O(\log^4n)$			& 		\\
$q=\alpha/(\alpha+1)$    	& 	$\sim 1$ 			&					\\
\hline
\end{tabular}
\caption{The orders of magnitude of critical quantities appearing in the paper.}\label{figure3}
\end{center}
\end{figure}

\begin{lemma}\label{Geom}
 Let $t$ be a positive integer,  and $d_1, d_2,\dots,d_t$ be non-negative integers such that 
 $t\leq \sqrt{n}/(\log n)^2$, $0\leq d_j \leq \nu$ for $1 \leq j\leq t$. Then,
 for $1\leq i_1<i_2<\cdots<i_t\leq n-\nu$,
\[
\bbP_{\mathcal{Y}_1}\bigl(Y_{i_1}=d_1, Y_{i_2} =d_2, \dots, Y_{i_t}=d_t\bigr) = \lb 1+ O\lp \frac{\nu^3t+\nu^2t^2}{m}\rp \rb  \prod_{j=1}^t (1-q)q^{d_j}.
\]
\end{lemma}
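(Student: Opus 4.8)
The plan is to compute the probability in Lemma~\ref{Geom} directly by counting compositions, exploiting the fact that $\mathcal{Y}_1$ is simply the set of all compositions of $m-a$ into $n-\nu$ non-negative parts, so that the uniform measure $\bbP_{\mathcal{Y}_1}$ is a ball-in-boxes model. Fixing the $t$ prescribed coordinates to the values $d_1,\dots,d_t$, the conditional event has probability
\[
\bbP_{\mathcal{Y}_1}\bigl(Y_{i_1}=d_1,\dots,Y_{i_t}=d_t\bigr)=\frac{\binom{m-a-D+n-\nu-t-1}{n-\nu-t-1}}{\binom{m-a+n-\nu-1}{n-\nu-1}},\qquad D:=\sum_{j=1}^t d_j,
\]
since removing $t$ boxes and $D$ balls leaves a composition of $m-a-D$ into $n-\nu-t$ parts. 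Note that by symmetry the answer does not depend on which indices $i_1<\dots<i_t$ are chosen, which is why the statement is uniform in that choice.

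The main work is then a careful asymptotic expansion of this ratio of binomial coefficients. First I would write the ratio as a product of falling factorials, cancelling the common factor to obtain a product of roughly $(n-\nu)$ terms of the shape $\frac{(\text{something})-(\text{shift})}{(\text{something})}$, and then split it into the ``ideal'' factor $\prod_{j=1}^t (1-q)q^{d_j}$ (recalling $q=\alpha/(\alpha+1)$ and $1-q=1/(\alpha+1)$, and $m=\alpha n$) times a correction factor. The cleanest route is to take logarithms: $\log$ of the ratio becomes a telescoping-like sum, each summand of the form $\log\!\bigl(1 - \tfrac{c_i}{m-a+n-\nu-i}\bigr)$ where the $c_i$ are bounded by $O(t)$ or involve the $d_j\le\nu$; expanding $\log(1-x)=-x+O(x^2)$ and summing, the leading term reproduces $t\log(1-q)+D\log q$ up to relative error, and the accumulated error terms are $O\!\bigl(\tfrac{\nu^2 t^2 + \nu^3 t}{m}\bigr)$ after bounding $\sum d_j^2 \le \nu\sum d_j \le \nu^2 t$, using $\nu = \Theta(\log^2 n)$, $m=\Theta(n\log n)$, and $t\le \sqrt n/(\log n)^2$ to guarantee the error stays $o(1)$ (so the $(1+O(\cdot))$ form is meaningful).

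The hard part will be the bookkeeping in the error estimate: one must keep track of three distinct small parameters — the ratio $\nu/m$, the ratio $t/n$, and the dependence on $D=\sum d_j$ which can be as large as $\nu t$ — and show that all cross-terms and second-order terms from the $\log(1-x)$ expansion collapse into the stated $O\bigl((\nu^3 t+\nu^2 t^2)/m\bigr)$. A convenient device is to compare the falling-factorial product to the corresponding power $\bigl(\tfrac{m-a-D+n-\nu-t}{m-a+n-\nu}\bigr)^{?}$ as was done in the proof of Lemma~\ref{threesets}, and to use that $a=O(\log^4 n)$ and $D\le \nu t$ are both negligible against $m$, so the base of the power is $q\bigl(1+O((\nu t + t^2)/m + 1/\log n)\bigr)$ raised to a power $\Theta(D)$ resp.\ $\Theta(t)$; controlling $(1+\varepsilon)^{\Theta(D)}$ with $\varepsilon D$ small is exactly what forces the $\nu^2 t^2/m$ term, while the $\nu^3 t/m$ term comes from the $\varepsilon t$ contribution scaled by the individual $d_j\le\nu$. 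Once the logarithmic sum is organized this way, exponentiating gives the claimed multiplicative error, and the lemma follows.
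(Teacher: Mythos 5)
Your proposal is correct and follows essentially the same route as the paper: both start from the exact identity $\bbP_{\mathcal{Y}_1}(Y_{i_1}=d_1,\dots,Y_{i_t}=d_t)=\binom{m-a-D+n-\nu-t-1}{n-\nu-t-1}/\binom{m-a+n-\nu-1}{n-\nu-1}$, factor it as a product of $D$ ratios tending to $q=m/(m+n)$ and $t$ ratios tending to $1-q=n/(m+n)$, and track the relative error per factor (of size $O((\nu^2+\nu t)/m)$) multiplied by the number of factors ($D\le\nu t$, resp.\ $t$). Your logarithmic bookkeeping is just a presentational variant of the paper's direct product-manipulation; the decomposition, the dominant $\nu^3t/m$ and $\nu^2t^2/m$ error sources, and the conclusion are the same.
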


{\bf Note.\/} Probabilistically, Lemma \ref{Geom} asserts that the joint
distributions of the components $Y_1,\dots,Y_{n-\nu}$, of dimension $t\leq \sqrt{n}/(\log n)^2$, are close to those of $(n-\nu)$ independent geometrics with success probability $1-q$, as long as the generic values of $Y_j$'s are of order $(\log n)^2$, at most.

\begin{proof}[Proof of Lemma \ref{Geom}]
By the definition of $\mathcal{Y}_1$ and $\bbP_{\mathcal{Y}_1}$,
\begin{flalign*}
&\bbP_{\mathcal{Y}_1}\bigl(Y_{i_1}=d_1, Y_{i_2}=d_2, \dots, Y_{i_t}=d_t\bigr) = \frac{{(m-a-d)+(n-\nu-t)-1 \choose n-\nu-t-1}}{{m-a+(n-\nu)-1 \choose n-\nu-1}} \\
=&\prod_{i=0}^{d-1}\frac{m-a-i}{m-a+n-\nu-1-i}\,\prod_{j=1}^{t}\frac{n-\nu-j}{m-a+n-\nu-d-j},
\end{flalign*}
where $d:=d_1+\cdots+d_t$. 
Using the inequalities
\[	i< d \leq \nu t, \quad  j\leq t, \quad a\leq \frac{\nu^2}{2},	\]
we get
\[	\frac{m-a-i}{m-a+n-\nu-1-i}=\frac{m}{m+n} \lb 1+O\lp \frac{\nu^2+\nu t}{m}\rp\rb, 	\]
and
\[	\frac{n-\nu-j}{m-a+n-\nu-d-j}=\frac{n}{m+n} \lb 1+O\lp \frac{\nu^2+\nu t}{m}\rp\rb,		\]
uniformly for $i$ and $j$ in question. Then,
{\allowdisplaybreaks
\begin{align} \label{s3:e6}
\prod_{i=0}^{d-1}\frac{m-a-i}{m-a+n-\nu-1-i} &= \lp \frac{m}{m+n}\rp^d \lp 1+O\lp \frac{\nu^2+\nu t}{m}\rp \rp^d \notag \\
&= \lp \frac{m}{m+n}\rp^d \lb 1+O\lp \frac{\nu^3t+\nu^2t^2}{m}\rp \rb ,
\end{align}
}	
and similarly,
\begin{align} \label{s3:e7}
\prod_{j=1}^{t}\frac{n-\nu-j}{m-a+n-\nu-d-j} &= \lp \frac{n}{m+n}\rp^t \lp 1+O\lp \frac{\nu^2+\nu t}{m}\rp \rp^t 	\notag	\\
&= \lp \frac{n}{m+n}\rp^t \lb 1+O\lp \frac{\nu^2t+\nu t^2}{m}\rp \rb. 
\end{align}
Combining Eqs. \eqref{s3:e6} and \eqref{s3:e7} we get the desired result.
\end{proof}

\begin{cor}\label{ProbY}
Let $t$, $0 \leq d_i\leq \nu$, ($i\le t$), be integers as in Lemma \ref{Geom}.  Then, 
\[	 \bbP_{\mathcal{Y}_1}(Y_{i_1}\leq d_1, Y_{i_2}\leq d_2, \dots, Y_{i_t}\leq d_t)= \lb 1+O\lp \frac{\nu^3t+\nu^2t^2}{m}\rp \rb \prod_{j=1}^t \bigl(1-q^{d_j+1}\bigr).	\]
\end{cor}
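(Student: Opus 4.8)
The plan is to deduce the corollary from Lemma \ref{Geom} by summing the pointwise estimate over all admissible coordinate values. First I would write the event in question as a disjoint union,
\[
\bbP_{\mathcal{Y}_1}(Y_{i_1}\le d_1,\dots,Y_{i_t}\le d_t)=\sum_{e_1=0}^{d_1}\cdots\sum_{e_t=0}^{d_t}\bbP_{\mathcal{Y}_1}(Y_{i_1}=e_1,\dots,Y_{i_t}=e_t).
\]
Each tuple $(e_1,\dots,e_t)$ occurring here satisfies $0\le e_j\le d_j\le\nu$, so Lemma \ref{Geom} applies to every summand, and --- this is the key point --- the relative error $O\big((\nu^3t+\nu^2t^2)/m\big)$ supplied by that lemma is the same for all of them, since it does not depend on the particular values $e_1,\dots,e_t$.

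Next I would substitute the estimate of Lemma \ref{Geom} into each summand and factor the common error out of the multiple sum:
\[
\bbP_{\mathcal{Y}_1}(Y_{i_1}\le d_1,\dots,Y_{i_t}\le d_t)=\Big[1+O\Big(\frac{\nu^3t+\nu^2t^2}{m}\Big)\Big]\sum_{e_1=0}^{d_1}\cdots\sum_{e_t=0}^{d_t}\prod_{j=1}^t(1-q)q^{e_j}.
\]
Finally the multiple sum factorizes into one-dimensional geometric sums,
\[
\sum_{e_1=0}^{d_1}\cdots\sum_{e_t=0}^{d_t}\prod_{j=1}^t(1-q)q^{e_j}=\prod_{j=1}^t\sum_{e=0}^{d_j}(1-q)q^{e}=\prod_{j=1}^t\bigl(1-q^{d_j+1}\bigr),
\]
which gives the claimed identity.

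There is no genuine obstacle here beyond the bookkeeping already indicated: the one thing to verify carefully is that the error term in Lemma \ref{Geom} is uniform over all choices $d_j\in\{0,1,\dots,\nu\}$ (as it is stated to be), so that the estimate survives being summed over the up to $(\nu+1)^t$ tuples --- the point being that it is extracted as a single common factor \emph{before} summation rather than accumulated term by term, so the relative error does not grow with the number of terms.
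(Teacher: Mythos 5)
Your proof is correct and is essentially identical to the paper's: both expand the event as a sum over exact values, invoke Lemma \ref{Geom} on each summand, pull out the uniform relative error $O\bigl((\nu^3 t+\nu^2 t^2)/m\bigr)$ as a common factor, and then factorize the remaining geometric sums. Your closing remark about the uniformity of the error term being the only point to check is exactly right and is implicit in the paper's argument.
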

\begin{proof}
By Lemma \ref{Geom},
{\allowdisplaybreaks
\begin{align*}
&\bbP_{\mathcal{Y}_1}(Y_{i_1}\leq d_1, Y_{i_2}\leq d_2, \dots, Y_{i_t}\leq d_t) \\
=& \sum_{j_1=0}^{d_1}\cdots \sum_{j_t=0}^{d_t}	\bbP_{\mathcal{Y}_1}(Y_{i_1}= j_1, \dots, Y_{i_t}= j_t)	\\
=&  \lb 1+ 
O\lp \frac{\nu^3t+\nu^2t^2}{m}\rp \rb\sum_{j_1=0}^{d_1}(1-q)q^{j_1}\cdots \sum_{j_t=0}^{d_t} (1-q) q^{j_t}\\
=& \lb 1+O\lp \frac{\nu^3t+\nu^2t^2}{m}\rp \rb \prod_{i=1}^t \lp 1-q^{d_i+1}\rp. \qedhere
\end{align*}
} 
\end{proof}

We will use Lemma \ref{threesets} and Corollary \ref{ProbY}  to prove that $\bs(n,m)$, or equivalently its inversion sequence $\boldsymbol X$, is not decomposable at the positions $n-\nu,n-\nu+1,\dots,n-1$. We start with the following lemma.

\begin{lemma}\label{EventB}
With  probability $1-O\bigl((\log n)^{-1}\bigr)$, the random composition $\boldsymbol Y=Y_1Y_2\dots Y_{n-\nu}\in \mathcal{Y}_1$ is not decomposable at every one of the positions $n-2\nu,n-2\nu+1,\dots,n-\nu-1$.
\end{lemma}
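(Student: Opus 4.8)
The plan is to show that the expected number of "bad" positions—those $j\in\{n-2\nu,\dots,n-\nu-1\}$ at which $\boldsymbol Y$ happens to be decomposable—is $O((\log n)^{-1})$, and then invoke Markov's inequality. Recall that $\boldsymbol Y=Y_1\dots Y_{n-\nu}$ is decomposable at position $j$ (in the sense of Definition \ref{decompositionpoint}, applied to the length-$(n-\nu)$ sequence) precisely when $Y_{j+i}\le i-1$ for every $i\in[n-\nu-j]$. For $j$ in the stated range, $n-\nu-j$ lies between $1$ and $\nu$, so this is a conjunction of at most $\nu$ upper-bound events on distinct coordinates $Y_{j+1},Y_{j+2},\dots,Y_{n-\nu}$, each of the form $Y_{j+i}\le i-1\le\nu-1\le\nu$. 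That is exactly the shape of event covered by Corollary \ref{ProbY}, with $t=n-\nu-j\le\nu\le\sqrt n/(\log n)^2$ and $d_i=i-1$.

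So the first step is: for a fixed such $j$, write $\ell:=n-\nu-j\le\nu$ and apply Corollary \ref{ProbY} with $(d_1,\dots,d_\ell)=(0,1,\dots,\ell-1)$ to get
\[
\bbP_{\mathcal{Y}_1}\bigl(\boldsymbol Y\text{ is decomposable at }j\bigr)
=\Bigl[1+O\bigl(\tfrac{\nu^3t+\nu^2t^2}{m}\bigr)\Bigr]\prod_{i=1}^{\ell}\bigl(1-q^{i}\bigr)
\le 2\prod_{i=1}^{\ell}\bigl(1-q^{i}\bigr),
\]
the last bound holding because $\nu^3 t+\nu^2t^2=O(\nu^5)=O((\log n)^{10})=o(m)$, so the error factor is $1+o(1)$. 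The key point is that $q=\alpha/(\alpha+1)$ is close to $1$ but $1-q=1/(\alpha+1)=\Theta(1/\log n)$, so the dominant factor in the product is the $i=1$ term, $1-q=1/(\alpha+1)$, giving $\prod_{i=1}^{\ell}(1-q^i)\le 1-q=O(1/\log n)$ uniformly in $\ell\ge 1$ (and in $j$). Summing over the $\nu=\Theta((\log n)^2)$ choices of $j$ would only give $O(\log n)$, which is too weak, so one must keep more of the product: $\prod_{i=1}^{\ell}(1-q^i)\le(1-q)(1-q^2)$ for $\ell\ge2$, and $(1-q^2)=(1-q)(1+q)\le 2(1-q)=O(1/\log n)$ as well. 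Hence for every $j$ with $\ell\ge 2$ the probability is $O((\log n)^{-2})$, while there is exactly one value of $j$ (namely $\ell=1$, i.e.\ $j=n-\nu-1$) with probability $O((\log n)^{-1})$.

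The second step is the union bound / linearity of expectation: letting $N$ be the number of positions $j\in\{n-2\nu,\dots,n-\nu-1\}$ at which $\boldsymbol Y$ is decomposable,
\[
E_{\mathcal{Y}_1}[N]\le \sum_{j=n-2\nu}^{n-\nu-1}\bbP_{\mathcal{Y}_1}(\boldsymbol Y\text{ decomp.\ at }j)
= O\bigl((\log n)^{-1}\bigr)+\sum_{\ell\ge 2}O\bigl((\log n)^{-2}\bigr)
= O\bigl((\log n)^{-1}\bigr)+O(\nu)\cdot O\bigl((\log n)^{-2}\bigr),
\]
and since $\nu=\Theta((\log n)^2)$ the second term is $O(1)$—still too weak. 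The fix is to be slightly more careful and keep \emph{three} factors: $\prod_{i=1}^{\ell}(1-q^i)\le(1-q)(1-q^2)(1-q^3)=O((\log n)^{-3})$ for $\ell\ge 3$, so the tail sum is $O(\nu)\cdot O((\log n)^{-3})=O((\log n)^{-1})$; the two exceptional terms $\ell=1,2$ contribute $O((\log n)^{-1})+O((\log n)^{-2})$. Altogether $E_{\mathcal{Y}_1}[N]=O((\log n)^{-1})$, and then $\bbP_{\mathcal{Y}_1}(N\ge 1)\le E_{\mathcal{Y}_1}[N]=O((\log n)^{-1})$. Finally, since the statement is about the tail $\boldsymbol X^\nu$ of the genuine inversion sequence, one transfers from $\bbP_{\mathcal{Y}_1}$ to $\bbP_{\mathcal{Y}_2}$ using \eqref{PY1=PY2}, which costs only an extra $O(n^{-1})$, and averages over the admissible prefixes $\boldsymbol a$ (the bound above being uniform in $\boldsymbol a$). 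This yields that $\boldsymbol Y$—equivalently $\boldsymbol X^\nu$—is, with probability $1-O((\log n)^{-1})$, not decomposable at any of $n-2\nu,\dots,n-\nu-1$.

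**Main obstacle.** The only real subtlety is bookkeeping in the union bound: a naive single-factor estimate $O((\log n)^{-1})$ per position, multiplied by the $\Theta((\log n)^2)$ positions, overshoots badly, so one has to exploit that $\prod_{i=1}^{\ell}(1-q^i)$ decays geometrically in $\ell$ (with ratio $\sim(\log n)^{-1}$) and keep enough factors—three suffice—to beat the count of positions. One must also double-check that the error term $O((\nu^3 t+\nu^2 t^2)/m)$ from Corollary \ref{ProbY}, with $t\le\nu$, is genuinely $o(1)$ (it is $O((\log n)^{10}/(n\log n))=o(1)$), so that it never interferes with the geometric decay; and that the range $\{n-2\nu,\dots,n-\nu-1\}$ really forces $1\le\ell\le\nu$ so that $t=\ell$ is within the regime where Lemma \ref{Geom} and Corollary \ref{ProbY} apply.
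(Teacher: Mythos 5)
Your proof is correct and follows essentially the same route as the paper's: apply Corollary \ref{ProbY} to each of the $\nu$ candidate decomposition positions, bound each probability by $2\prod_{i=1}^{\ell}(1-q^i)$, and sum, keeping three factors of $(1-q^i)$ so that the $\Theta(\nu)=\Theta((\log n)^2)$-term tail contributes $O(\nu(1-q)^3)=O((\log n)^{-1})$. The only extraneous bit is the closing remark about transferring from $\bbP_{\mathcal{Y}_1}$ to $\bbP_{\mathcal{Y}_2}$: the lemma as stated lives entirely in $\mathcal{Y}_1$, and the $\mathcal{Y}_1\to\mathcal{Y}_2$ transfer is the business of the subsequent Corollary \ref{lastk}, not of this lemma.
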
 

\begin{proof}
Let $B_i$ be the event that the sequence $Y_1Y_2\dots Y_{n-\nu}\in \mathcal{Y}_1$ is decomposable at $i$. We need to prove that $\bbP_{\mathcal{Y}_1} \lp \bigcup_{\ell=0}^{\nu-1}B_{n-2\nu+\ell}\rp \to 0$. It suffices to show that  
\[
\sum_{\ell=0}^{\nu-1}\bbP_{\mathcal{Y}_1} (B_{n-2\nu+\ell}) \to 0.
\]
 By the definition of decomposability,
\[	 B_{n-2\nu+\ell} =\bigcap_{j=n-2\nu+\ell+1}^{n-\nu}\{Y_j\le j-(n-2\nu+\ell+1)\},
\quad \ell=0,1,\dots,\nu-1.\]
By Corollary ~\ref{ProbY} we have
\[	 \bbP_{\mathcal{Y}_1} (B_{n-2\nu+\ell})\le 2 \prod_{j=1}^{\nu-\ell} \lp 1-q^j \rp,	\]
uniformly for all $\ell$  in the range. Then
\begin{align*}
&\sum_{\ell=0}^{\nu-1}\bbP_{\mathcal{Y}_1} (B_{n-2\nu+\ell})	\le 2		\sum_{\ell=0}^{\nu-1}\prod_{j=1}^{\nu-\ell} \lp 1-q^j \rp = 2\sum_{d=1}^{\nu}\prod_{j=1}^{d} \lp 1-q^j \rp\\
\le&\,2\bigl[(1-q)+(1-q)(1-q^2)+(\nu-3)(1-q)(1-q^2)(1-q^3)\bigr]\\
=&\,O(\alpha^{-1})+O(\alpha^{-2})+O(\nu\alpha^{-3})=O(\alpha^{-1})=O((\log n)^{-1}),
\end{align*}
which finishes the proof.
\end{proof}

\begin{cor}\label{lastk}
With probability $1-O((\log n)^{-1})$, the tail $X_{\nu+1}X_{\nu+2}\dots X_n\,(\in \mathcal{Y}_2)$ of $\bs(n,m)$ is not decomposable at any of the positions $n-\nu, n-\nu+1,\dots, n-1$.
\end{cor}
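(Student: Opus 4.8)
The plan is to deduce Corollary \ref{lastk} directly from Lemma \ref{EventB}, the comparison estimate \eqref{PY1=PY2}, and a routine re-indexing that matches the tail of $\bX$ to a free composition. First I would record the elementary observation that, for $k\in[n-\nu-1]$, the tail $X_{\nu+1}\dots X_n$ — viewed as a sequence of length $n-\nu$ — is decomposable at position $k$ if and only if $\bs(n,m)$ (equivalently $\bX$) is decomposable at position $\nu+k$. This is immediate from Definition \ref{decompositionpoint}: the constraints "$X_{(\nu+k)+i}\le i-1$ for $i\in[n-\nu-k]$" defining decomposability of $\bX$ at $\nu+k$ involve only the coordinates $X_{\nu+k+1},\dots,X_n$, all of which lie in the tail, and these are exactly the constraints defining decomposability of the tail at $k$. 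Since $n-2\nu\ge 1$ and $n-2\nu+\ell$ corresponds to $\nu+(n-2\nu+\ell)=n-\nu+\ell$ for $\ell=0,\dots,\nu-1$, the event "$\bs(n,m)$ is not decomposable at any of $n-\nu,\dots,n-1$" coincides with "the tail $X_{\nu+1}\dots X_n$ is not decomposable at any of $n-2\nu,\dots,n-\nu-1$", so it suffices to bound the probability of the latter.

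Next I would condition on the first $\nu$ coordinates. Fix admissible $\boldsymbol a=a_1\dots a_\nu$ (that is, $0\le a_i\le i-1$), put $a=a_1+\dots+a_\nu\le\binom{\nu}{2}\ll m$; as noted just before Lemma \ref{threesets}, conditionally on $X_1=a_1,\dots,X_\nu=a_\nu$ the tail $\boldsymbol Y=X_{\nu+1}\dots X_n$ is distributed uniformly on $\mathcal{Y}_2(\boldsymbol a)$, i.e.\ according to $\bbP_{\mathcal{Y}_2(\boldsymbol a)}$. Let $Q=Q(\boldsymbol a)$ be the set of compositions of $m-a$ into $n-\nu$ nonnegative parts that are not decomposable at any of $n-2\nu,\dots,n-\nu-1$. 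Lemma \ref{EventB} says precisely $\bbP_{\mathcal{Y}_1(\boldsymbol a)}(\boldsymbol Y\in Q)=1-O((\log n)^{-1})$, and — this is the one point to check carefully — its proof is uniform in $\boldsymbol a$: the estimate of Corollary \ref{ProbY} depends on $\boldsymbol a$ only through $m-a$ (with $a\ll m$), $\nu-\ell\le\nu\ll\sqrt{n}/(\log n)^2$ so Lemma \ref{Geom} applies throughout, and the final bound $\sum_\ell\bbP_{\mathcal{Y}_1}(B_{n-2\nu+\ell})=O(\alpha^{-1})$ has no $\boldsymbol a$-dependence. Applying the comparison bound \eqref{PY1=PY2} (which is uniform over all sets $Q$ of compositions of $m-a$, and uniform in $\boldsymbol a$ by Lemma \ref{threesets}) yields
\[
\bbP_{\mathcal{Y}_2(\boldsymbol a)}(\boldsymbol Y\in Q)=\bbP_{\mathcal{Y}_1(\boldsymbol a)}(\boldsymbol Y\in Q)+O(n^{-1})=1-O((\log n)^{-1}),
\]
uniformly in $\boldsymbol a$.

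Finally I would remove the conditioning by averaging over $\boldsymbol a$ against the true law of $X_1\dots X_\nu$; since the bound above is uniform in $\boldsymbol a$,
\[
\bbP\bigl(X_{\nu+1}\dots X_n\text{ not decomposable at any of }n-2\nu,\dots,n-\nu-1\bigr)=\sum_{\boldsymbol a}\bbP(X_1=a_1,\dots,X_\nu=a_\nu)\,\bbP_{\mathcal{Y}_2(\boldsymbol a)}(\boldsymbol Y\in Q)=1-O((\log n)^{-1}),
\]
and the event identification from the first paragraph converts this into the assertion of Corollary \ref{lastk}. I do not expect a genuine analytic obstacle here: the substance is already in Lemmas \ref{threesets} and \ref{EventB}, and the only real care needed is to make the uniformity of those error bounds in the conditioning value $\boldsymbol a$ explicit, so that the averaging in the last display is legitimate.
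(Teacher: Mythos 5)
Your proof is correct and takes essentially the same route as the paper, whose proof of Corollary~\ref{lastk} is the one-liner that it follows immediately from Lemma~\ref{EventB} combined with Lemma~\ref{threesets}. You have simply made explicit the ingredients the paper leaves implicit: the re-indexing that matches positions $n-\nu,\dots,n-1$ to positions $n-2\nu,\dots,n-\nu-1$ of the tail, the passage from $\bbP_{\mathcal{Y}_1}$ to $\bbP_{\mathcal{Y}_2}$ via \eqref{PY1=PY2}, and the uniformity in $\boldsymbol a$ that justifies averaging over the conditioning.
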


\begin{proof} This follows immediately from Lemma \ref{EventB} combined with Lemma ~\ref{threesets}.
\end{proof}

\begin{cor}\label{first-last-k}
With probability $1-O((\log n)^{-1})$, the random permutation $\bs(n,m)$ is not decomposable at any of the positions $1,2,\dots,\nu$ and $n-\nu,n-\nu+1,\dots,n-1$.
\end{cor}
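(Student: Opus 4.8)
The plan is to combine Corollary \ref{lastk}, which already handles the positions $n-\nu,\dots,n-1$, with a symmetric statement for the positions $1,\dots,\nu$, and then apply a union bound. Writing $A_i$ for the event that $\bs(n,m)$ is decomposable at position $i$ (as in Lemma \ref{symmetry}), Corollary \ref{lastk} gives $\bbP\bigl(\bigcup_{i=n-\nu}^{n-1}A_i\bigr)=O((\log n)^{-1})$. First I would invoke the bijection $\psi=\psi_n$ from \eqref{s3:e8}, which is measure-preserving on $\mcal S(n,m)$ and sends a permutation decomposable at position $i$ to one decomposable at position $n-i$; equivalently, by Corollary \ref{EventA} with $k=\nu$ (note $\nu=\Theta(\log^2 n)\le n/2$ for large $n$), we have
\[
\bbP\Bigl(\bigcup_{i=1}^{\nu}A_i\Bigr)=\bbP\Bigl(\bigcup_{i=1}^{\nu}A_{n-i}\Bigr)=O((\log n)^{-1}).
\]
Then a final union bound yields $\bbP\bigl(\bigcup_{i=1}^{\nu}A_i\cup\bigcup_{i=n-\nu}^{n-1}A_i\bigr)=O((\log n)^{-1})$, which is exactly the claim.

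The only point requiring a little care is that Corollary \ref{EventA} is stated for $1\le k\le n/2$, so I should note explicitly that $\nu=\lceil 2(\alpha+1)\log n\rceil=\Theta(\log^2 n)$ is well within this range for all sufficiently large $n$ (and for small $n$ there is nothing to prove since the $O(\cdot)$ absorbs finitely many exceptions). I expect no real obstacle here: the substance is entirely in Corollary \ref{lastk} (and hence in Lemmas \ref{threesets}, \ref{EventB} and Corollary \ref{ProbY}), and this corollary is just the reflection-and-union-bound packaging of that result, using the symmetry bijection $\psi$ to transfer the tail estimate to the head.
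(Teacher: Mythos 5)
Your proof is correct and is essentially the same as the paper's: both combine Corollary \ref{lastk} (which handles positions $n-\nu,\dots,n-1$) with the symmetry in Corollary \ref{EventA} to transfer the bound to positions $1,\dots,\nu$, and finish with a union bound. The explicit remark that $\nu=\Theta(\log^2 n)\le n/2$ for large $n$ is a minor courtesy the paper leaves implicit.
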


\begin{proof}
It suffices to show that
\[	\bbP\bigg(\bigcup_{i=1}^{\nu}A_i\bigg)+\bbP \bigg( \bigcup_{i=1}^{\nu}A_{n-i}\bigg) =O\bigl((\log n)^{-1}\bigr),	\]
where $A_i$ is the event that $\bs(n,m)$ is decomposable at position $i$. By Corollary ~\ref{EventA},  we have 
\[	\bbP\bigg(\bigcup_{i=1}^{\nu}A_i\bigg)= \bbP\bigg( \bigcup_{i=1}^{\nu}A_{n-i}\bigg),	\]
and by Corollary \ref{lastk},
\[	\bbP\bigg(\bigcup_{i=1}^{\nu}A_{n-i}\bigg) =O\bigl((\log n)^{-1}\bigr),		\]
from which we get the desired result.
\end{proof}

So far we have shown that $\bs(n,m)$, equivalently its inversion sequence $\bX$, is not decomposable at the positions $j\in \{1,2,\dots,\nu\} \cup \{n-\nu,\dots,n-1\}$ whp as $n \to \infty$. Then, whp, the number of indecomposable blocks of $\bs(n,m)$ is equal to the number of blocks in each of the sequences $X_{\nu+1}X_{\nu+2}\dots X_{n-\nu}$ and $X_{\nu+1}X_{\nu+2}\dots X_n$ as $n$ tends to infinity. By Lemma  \ref{threesets}
(see Eq. ~\eqref{PY1=PY2}), the distribution of the number of indecomposable blocks in $X_{\nu+1}X_{\nu+2}\dots X_n$ is  within max-distance $O\bigl(n^{-1}\bigr)$ from the distribution  of the number of indecomposable blocks in $\boldsymbol Y=Y_1Y_2\dots Y_{n-\nu}\in \mathcal Y_1$. Thus, it is enough to consider the random sequence $\boldsymbol Y\in \mathcal{Y}_1$.
\smallskip

\begin{definition}\label{markedpointdef}
Let $y_1\dots y_{n-\nu}\in \mathcal{Y}_1$. A point $ i \in [n-2\nu]$ is called a \textit{marked point} if $y_{i+t}\leq t-1$ for all $t \in [\nu]$. 
\end{definition}

The probability that a point $i$ is marked in $\bY$ is asymptotically $\prod_{j=1}^{\nu}(1-q^j)$ by Corollary~\ref{ProbY}, where $q=\alpha/(\alpha+1)$. We will use the following technical lemma later.

\begin{lemma}\label{Approximate h}
Let $h(q)= \prod_{j=1}^{\infty}(1-q^j)$, where $q=\frac{\al}{\al+1}$. Then, as $n\to \infty$,
\be \label{h,asymp}
	\prod_{j=1}^{\nu}(1-q^j)	= h(q)\cdot \lp 1+O(\alpha/n^2)\rp.				
\ee
\end{lemma}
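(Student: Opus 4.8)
The plan is to bound the ratio $\prod_{j=1}^{\nu}(1-q^j)\big/\prod_{j=1}^{\infty}(1-q^j)$ from above and below and show it is $1+O(\alpha/n^2)$. Writing $h(q)=\prod_{j=1}^\nu(1-q^j)\cdot\prod_{j=\nu+1}^\infty(1-q^j)$, the task reduces to estimating the tail product $T:=\prod_{j=\nu+1}^\infty(1-q^j)$ and showing $T=1+O(\alpha/n^2)$, since then $h(q)=\prod_{j=1}^\nu(1-q^j)\cdot(1+O(\alpha/n^2))$ and one inverts the relation (the factor $\prod_{j=1}^\nu(1-q^j)$ is bounded, being a product of numbers in $(0,1)$, so dividing preserves the error form). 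First I would take logarithms: $\log T=\sum_{j>\nu}\log(1-q^j)$, and since all $q^j$ are small for $j>\nu$ (see below), $-\log T=\sum_{j>\nu}\bigl(q^j+O(q^{2j})\bigr)=\frac{q^{\nu+1}}{1-q}\bigl(1+O(q^{\nu+1})\bigr)$.

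Now I would plug in the orders from Figure \ref{figure3}: $q=\alpha/(\alpha+1)$, so $1-q=1/(\alpha+1)$, whence $\frac{q^{\nu+1}}{1-q}=(\alpha+1)\,q^{\nu+1}$. The point is that $q^\nu$ is extremely small. Indeed $\log q=\log(1-(1-q))=-(1-q)+O((1-q)^2)=-\frac{1}{\alpha+1}+O(\alpha^{-2})$, and $\nu\sim 2\alpha\log n$, so $\nu\log q\sim -\frac{2\alpha\log n}{\alpha+1}=-2\log n\,(1+O(\alpha^{-1}))$, giving $q^\nu=n^{-2+o(1)}$. More carefully one gets $q^{\nu}=O(n^{-2})$ once the $\lceil\,\cdot\,\rceil$ in the definition $\nu=\lceil 2(\alpha+1)\log n\rceil$ is used: $\nu\ge 2(\alpha+1)\log n$ forces $\nu\log q\le -2(\alpha+1)\log n\cdot\frac{1}{\alpha+1}(1+O(\alpha^{-1}))=-2\log n+O(\log n/\alpha)=-2\log n+O(\log n/\log n)$, so $q^{\nu+1}=O(n^{-2})$. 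Hence $(\alpha+1)q^{\nu+1}=O(\alpha n^{-2})$, so $-\log T=O(\alpha/n^2)$ and therefore $T=1+O(\alpha/n^2)$, which is what we need.

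Assembling: $\prod_{j=1}^\nu(1-q^j)=h(q)/T=h(q)\bigl(1+O(\alpha/n^2)\bigr)$, which is \eqref{h,asymp}. The only mild subtlety — and the step I would be most careful about — is pinning down the constant in the exponent of $n$ when passing from $\nu\log q$ to $-2\log n$: one must verify that the correction terms (from the ceiling in $\nu$, from the expansion $\log q=-(1-q)+O((1-q)^2)$, and from $\alpha\in I=[0.5\log n,\log n]$ so that $\log n/\alpha=O(1)$) only perturb the exponent by $O(\log n/\alpha)=O(1)$, i.e. by a constant factor, and in particular keep $q^{\nu+1}=O(n^{-2})$. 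Everything else is a routine geometric-series tail estimate.
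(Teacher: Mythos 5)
Your proposal is correct and follows essentially the same route as the paper: both reduce the lemma to showing the tail product $\prod_{j>\nu}(1-q^j)=1+O(\alpha/n^2)$, and both do this by observing that $(\alpha+1)q^{\nu}=O(\alpha/n^2)$ because the ceiling in $\nu=\lceil 2(\alpha+1)\log n\rceil$ forces $q^{\nu}\le n^{-2}$. The only cosmetic difference is that the paper bounds the tail directly via $\prod_{j>\nu}(1-q^j)\ge 1-\sum_{j\ge\nu}q^j$, whereas you take logarithms and expand $\log(1-q^j)$; these are interchangeable one-line estimates.
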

 
\begin{proof}
 We have 
\begin{align*}
1\ge \prod_{j>\nu}(1-q^j)\ge&\, 1-\sum_{j\ge\nu}q^j=1-\frac{q^{\nu}}{1-q}	\notag	\\
=&\,1-(\alpha+1)\left(1-\frac{1}{\alpha+1}\right)^{\lc 2(\alpha+1)\log n \rc}=1-O(\alpha/n^2).
\end{align*}
Consequently, 
\[	\prod_{j=1}^{\nu}(1-q^j)	= h(q) \cdot	\prod_{j>\nu}\frac{1}{1-q^j} = h(q) \cdot \lp 1+O(\alpha/n^2)\rp.  \qedhere	\]
\end{proof}

Let ${\mcal M}$ and $D$ denote the number of marked points and the number of decomposition points, respectively, in the random sequence $\boldsymbol Y \in \mathcal{Y}_1$. The next proposition allows us to just focus on the marked points.

\begin{prop}\label{decomposition vs marked}
The set of decomposition points and the set of marked points in the random sequence $\boldsymbol Y \in \mathcal{Y}_1$ are the same with probability $1-O((\log n)^{-1})$. In particular,
\begin{equation}\label{dTV,D,M}
d_{\textup{TV}}\bigl(D,{\mcal M})=O((\log n)^{-1}),
\end{equation}
where $d_{\textup{TV}}(X,Y)$ is the total variation distance between the distributions of two
integer-valued random variables $X$ and $Y$.
\end{prop}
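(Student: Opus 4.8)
The plan is to show that the only discrepancy between decomposition points and marked points can occur "far out" in the sequence, and that this is a low-probability event. Recall the definitions: $i\in[n-2\nu]$ is a marked point if $y_{i+t}\le t-1$ for all $t\in[\nu]$, whereas $i$ is a decomposition point (of the composition $\boldsymbol y=y_1\dots y_{n-\nu}$) if $y_{i+t}\le t-1$ for all $t\in[n-\nu-i]$. Thus every decomposition point $i\le n-2\nu$ is automatically a marked point, because the defining inequalities for marked points are a sub-family of those for decomposition points. Conversely, a marked point $i\le n-2\nu$ need \emph{not} be a decomposition point: it could fail one of the extra constraints $y_{i+t}\le t-1$ for some $t$ with $\nu<t\le n-\nu-i$. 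So the symmetric difference of the two point-sets is contained in the union of (a) the decomposition points $i$ with $n-2\nu<i\le n-\nu-1$, and (b) the marked points $i\le n-2\nu$ that fail to be decomposition points.

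First I would dispose of region (a): by Lemma \ref{EventB}, with probability $1-O((\log n)^{-1})$ there is no decomposition point of $\boldsymbol Y$ among the positions $n-2\nu,\dots,n-\nu-1$, so (a) is empty off an event of probability $O((\log n)^{-1})$. Next, for region (b), I would bound the expected number of marked points $i\le n-2\nu$ that are not decomposition points. If $i$ is such a point, then $y_{i+t}\le t-1$ fails for some $t\in(\nu,n-\nu-i]$; let $t^\ast$ be the smallest such $t$, so $y_{i+t^\ast}\ge t^\ast\ge\nu+1$ while $y_{i+t}\le t-1$ for all $t\le t^\ast-1$, in particular $y_{i+t}\le \nu$ for $t\le\nu$. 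The key estimate: using Lemma \ref{Geom} (the components of $\boldsymbol Y$ behave like independent geometrics with parameter $1-q$ on the relevant scales) together with the tail bound $\bbP_{\mathcal{Y}_1}(Y_j\ge \nu+1)=O(q^{\nu+1})=O(\alpha/n^2)$ coming from the same computation as in Lemma \ref{Approximate h}, the probability that a fixed $i$ is marked and has $y_{i+t^\ast}\ge\nu+1$ for some $t^\ast$ is at most $\sum_{t^\ast>\nu}\bigl[\prod_{j=1}^{\nu}(1-q^j)\bigr]\cdot O(q^{\nu+1})$, and summing the geometric-type series in $t^\ast$ together with the extra decay from the additional "$\le t-1$" constraints keeps this $O(q^{\nu+1})$ per $i$. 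Multiplying by $n$ choices of $i$ gives an expected count of size $O(n q^{\nu+1})=O(\alpha/n)=o(1)$; actually, since $\prod_{j=1}^\nu(1-q^j)=h(q)=O(\alpha^{-1})$-type small by the reasoning in Lemma \ref{EventB}, one gets a comfortable $O((\log n)^{-1})$ bound on this expected count, hence on the probability that region (b) is nonempty, by Markov's inequality.

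Combining, off an event of probability $O((\log n)^{-1})$, the decomposition point set and the marked point set of $\boldsymbol Y$ coincide, which is the first assertion. For the total variation bound \eqref{dTV,D,M}, note that on the complement of that event $D={\mcal M}$ exactly, so using the standard coupling inequality $d_{\textup{TV}}(D,{\mcal M})\le \bbP(D\ne{\mcal M})=O((\log n)^{-1})$.

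The main obstacle I expect is getting the tail estimate in region (b) genuinely uniform and summable: Lemma \ref{Geom} controls joint distributions only for index-sets of size $t\le\sqrt n/(\log n)^2$ and for target values of order $(\log n)^2$ at most, so one must be careful that the event "$i$ marked and $y_{i+t^\ast}\ge\nu+1$" is expressed in terms of only $O(\nu)$ coordinates with values $O(\nu)=O(\log^2 n)$ before invoking the lemma — i.e. one should truncate at the first violation $t^\ast$ rather than describing the whole tail — and then sum over $t^\ast$ using the crude bound $\prod_{j=1}^{t^\ast-1}(1-q^j)\le\prod_{j=1}^{\nu}(1-q^j)$ plus the single factor $\bbP(Y_{i+t^\ast}\ge t^\ast)\le q^{t^\ast}/(1-q)$. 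Once the series $\sum_{t^\ast>\nu}q^{t^\ast}$ is seen to contribute only a further $O(q^{\nu})$, the whole argument collapses to the same order of magnitude already encountered in Lemmas \ref{EventB} and \ref{Approximate h}, namely $O((\log n)^{-1})$.
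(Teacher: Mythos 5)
Your decomposition into regions (a) (decomposition points in $(n-2\nu,n-\nu-1]$, killed by Lemma~\ref{EventB}) and (b) (marked points $\le n-2\nu$ that are not decomposition points) is correct, as is the one-line observation that decomposition points $\le n-2\nu$ are automatically marked. The handling of (a) matches the paper exactly. For (b), however, your per-$i$, per-$t^\ast$ first-moment argument is considerably more elaborate than necessary and has a couple of soft spots. The paper disposes of (b) in one stroke: if $\boldsymbol Y\in \mathcal{Y}_3$, i.e., every $Y_j\le \nu-1$, then for any marked $i$ the extra constraints $y_{i+t}\le t-1$ for $t>\nu$ hold for free (since $t-1\ge \nu>\nu-1\ge y_{i+t}$), so every marked point is automatically a decomposition point; and $\bbP_{\mathcal Y_1}(\boldsymbol Y\notin\mathcal Y_3)=O(n^{-1})$ is exactly Lemma~\ref{threesets}. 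This global inclusion avoids everything you flagged as an obstacle.

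Two issues with your version of the (b) estimate. First, you attribute the tail bound $\bbP_{\mathcal Y_1}(Y_j\ge\nu+1)=O(q^{\nu+1})$ to ``the same computation as in Lemma~\ref{Approximate h}''; but Lemma~\ref{Approximate h} is a purely analytic statement about the product $\prod_{j>\nu}(1-q^j)$. The probabilistic tail estimate you need is the explicit composition count carried out inside the \emph{proof} of Lemma~\ref{threesets} (which shows $n\,\bbP(Y_1\ge\nu)=O(n^{-1})$). Second, the single-coordinate bound $\bbP(Y_{i+t^\ast}\ge t^\ast)\le q^{t^\ast}/(1-q)$ for $t^\ast>\nu$, and its combination with the marking event across disjoint coordinates, is not delivered by Lemma~\ref{Geom} or Corollary~\ref{ProbY}, which are stated only for target values $d_j\le \nu$. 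You are aware of this and propose truncating at the first violation, which is reasonable, but making it rigorous requires a separate composition-count argument precisely of the type already done in Lemma~\ref{threesets} — at which point one might as well invoke the lemma globally, as the paper does. Finally, your closing remark that region (b) contributes ``$O((\log n)^{-1})$, the same order as Lemma~\ref{EventB}'' is an overstatement of its size: (b) is $O(n^{-1})$, and it is region (a) alone that sets the $O((\log n)^{-1})$ rate in the proposition. None of this breaks your argument, but tightening it essentially reproduces the paper's shorter proof.
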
 

\begin{proof}
 By Lemma ~\ref{EventB}, with probability $1-O((\log n)^{-1})$,  none of $i\in [n-2\nu+1,n-1]$ is a decomposition point. Also, if $i\le n-2\nu$  is a decomposition point, then it is also a marked point (see Definition~\ref{decompositionpoint}). On the other hand, if  $i$ is marked, then the sequence $\boldsymbol Y$ is decomposable at $i$ provided that $Y_j\leq \nu$ for $j=1,2,\dots,n-\nu$, and  by  Lemma ~\ref{threesets}, the latter holds with
probability $1-O(n^{-1})$.
\end{proof}

Let $\xi_j$ be indicator of the event $\{j\text{ is marked}\}$. Then,
\[
{\mcal M}=\sum_{j=1}^{n-2\nu}\xi_j.
\]
We will analyze a limiting distribution of ${\mcal M}$ via the binomial moments $E_{\mathcal{Y}_1}\left[\binom{{\mcal M}}{\ell}\right]$. 

\begin{lemma}\label{factorialmoments}
Let $h(q)= \prod_{j=1}^{\infty}(1-q^j)$, where $q=\frac{\al}{\al+1}$. Let $\kappa_n:=nh(q) + \frac{1}{nh(q)}$ and suppose that
\begin{equation}\label{nh+1/nh}
\kappa_n=o(\log\log n).
\end{equation}
Then, $\alpha\sim (6/\pi^2)\log n$ and, introducing $\varepsilon_n:=\frac{2\kappa_n}{\log\log n}\to 0$, for any  constant $\Delta \in (0,1)$,
\begin{equation*}  
\bbP_{\mathcal{Y}_1}({\mcal M}=j)= e^{-nh(q)}\,\frac{(nh(q))^j}{j!}\,\bigl[1+O((\log n)^{-\Delta
+\varepsilon_n})\bigr],
\end{equation*}
uniformly for $j\le (\log n)^{(1-\Delta)/2}$.
Consequently, for any constant $\Delta\in (0,1)$,
\begin{equation}\label{dTV,M,Poi}
d_{\textup{TV}}\bigl[{\mcal M},\textup{Poisson}(nh(q))\bigr]=O\bigl((\log n)^{-\Delta}\bigr).
\end{equation}
\end{lemma}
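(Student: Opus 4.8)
The plan is to compute the binomial moments $E_{\mathcal{Y}_1}\!\left[\binom{\mcal M}{\ell}\right]$ asymptotically and then invoke the method of moments (in its Poisson-approximation, total-variation form) to pin down the distribution of $\mcal M$. Writing $\mcal M=\sum_{j=1}^{n-2\nu}\xi_j$ with $\xi_j$ the indicator that $j$ is marked, we have
\[
E_{\mathcal{Y}_1}\!\left[\binom{\mcal M}{\ell}\right]=\sum_{1\le j_1<\cdots<j_\ell\le n-2\nu}\bbP_{\mathcal{Y}_1}(j_1,\dots,j_\ell\text{ all marked}).
\]
The event ``$j_1,\dots,j_\ell$ are all marked'' is, by Definition~\ref{markedpointdef}, an intersection of events of the form $\{Y_{j_r+t}\le t-1\}$ for $t\in[\nu]$ and $r\in[\ell]$, i.e.\ an upper-bound constraint on at most $\nu\ell$ of the coordinates of $\boldsymbol Y$. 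So I would apply Corollary~\ref{ProbY} with $t$ there replaced by (at most) $\nu\ell$: this is legitimate provided $\nu\ell\le\sqrt n/(\log n)^2$, which holds because $\nu=\Theta(\log^2 n)$ and we only need $\ell$ up to $(\log n)^{(1-\Delta)/2}$. The catch is that when two marked points are within distance $\nu$ of each other the constraint sets overlap, but there are only $O((n\nu)^{\ell-1}\cdot\nu)$ such ``clustered'' $\ell$-tuples out of $\binom{n-2\nu}{\ell}\sim n^\ell/\ell!$, so their total contribution is $O(n^{\ell-1}\nu^{\ell}/(\ell-1)!)=o(n^\ell/\ell!\cdot n^{-1}\mathrm{poly}\log n)$, negligible against the main term as long as $nh(q)$ stays bounded away from $0$ and $\infty$ in the relevant polylog sense — which is exactly what hypothesis~\eqref{nh+1/nh} buys. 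For a ``spread-out'' $\ell$-tuple the probability factorizes, up to the Corollary's relative error, into $\prod_{r=1}^\ell\prod_{t=1}^{\nu}(1-q^{t})=\bigl(\prod_{t=1}^\nu(1-q^t)\bigr)^\ell$, which by Lemma~\ref{Approximate h} equals $h(q)^\ell(1+O(\alpha/n^2))^\ell$.

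Putting this together gives
\[
E_{\mathcal{Y}_1}\!\left[\binom{\mcal M}{\ell}\right]=\frac{(n-2\nu)^{\underline{\ell}}}{\ell!}\,h(q)^\ell\bigl[1+O(\text{rel.\ err.})\bigr]+O\!\left(\frac{n^{\ell-1}\nu^\ell}{(\ell-1)!}\right)=\frac{(nh(q))^\ell}{\ell!}\bigl[1+O((\log n)^{-\Delta+\varepsilon_n})\bigr],
\]
uniformly for $\ell\le(\log n)^{(1-\Delta)/2}$, where the error is obtained by bounding the relative error $\nu^3(\nu\ell)+\nu^2(\nu\ell)^2)/m$ from Corollary~\ref{ProbY} (which is $O(\mathrm{polylog}/\log n)$), the $\nu$-clustering error, and the $(n-2\nu)^{\underline\ell}$ vs.\ $n^\ell$ discrepancy, all against the fact that $(nh(q))^\ell$ is $(\log n)^{o(1)\cdot\ell}$ under~\eqref{nh+1/nh}. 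Along the way one extracts $\alpha\sim(6/\pi^2)\log n$: indeed $\kappa_n=o(\log\log n)$ forces $\log(nh(q))=o(\log\log n)$, and since $-\log h(q)=\sum_{j\ge1}\frac{q^j}{j}\cdot(\text{something})$ behaves (via the classical Euler/theta asymptotics for $\prod(1-q^j)$ as $q\uparrow1$) like $\frac{\pi^2}{6}\cdot\frac{q}{1-q}+O(\log\frac1{1-q})=\frac{\pi^2}{6}\alpha+O(\log\alpha)$, the relation $\log n\sim\frac{\pi^2}{6}\alpha$ drops out.

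Finally, to pass from binomial moments to the pointwise estimate $\bbP_{\mathcal{Y}_1}(\mcal M=j)=e^{-nh(q)}(nh(q))^j/j!\,[1+O((\log n)^{-\Delta+\varepsilon_n})]$ and then to~\eqref{dTV,M,Poi}, I would use the standard Bonferroni/inclusion–exclusion inversion: $\bbP(\mcal M=j)=\sum_{\ell\ge j}(-1)^{\ell-j}\binom{\ell}{j}E\binom{\mcal M}{\ell}$, truncated at $\ell=L:=(\log n)^{(1-\Delta)/2}$, with the tail controlled because $E\binom{\mcal M}{\ell}\le(nh(q))^\ell/\ell!$ decays super-exponentially once $\ell\gg nh(q)$ (and $nh(q)$ is tiny compared to $L$). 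Summing the resulting pointwise bound over $j>L$ — where the Poisson mass is super-polynomially small — yields the total variation bound $O((\log n)^{-\Delta})$, absorbing the $\varepsilon_n\to0$ correction into the constant by shrinking $\Delta$ slightly if needed. The main obstacle, I expect, is the bookkeeping of error terms: one must verify that \emph{every} source of error — the relative error in Corollary~\ref{ProbY} evaluated at $t=\nu\ell$, the overlap/clustering correction, the falling-factorial-vs-power correction, and the moment-truncation tail — stays below $(\log n)^{-\Delta+\varepsilon_n}$ \emph{simultaneously and uniformly} in $j,\ell\le L$, and this is delicate precisely because $nh(q)$ is allowed to be as large as $(\log n)^{o(1)}$, so the $\ell$-th power of a slowly-growing quantity must be shown not to overwhelm a $(\log n)^{-\Delta}$ gain; keeping $\varepsilon_n=2\kappa_n/\log\log n\to0$ in the exponent is exactly the device that makes this work.
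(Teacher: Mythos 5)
Your overall strategy — binomial moments via inclusion–exclusion, factorization of spread‑out $\ell$‑tuples using Corollary~\ref{ProbY} and Lemma~\ref{Approximate h}, then Bonferroni inversion and a tail bound to pass to a pointwise and total‑variation estimate — is exactly the paper's strategy, and the asymptotic $\alpha\sim(6/\pi^2)\log n$ is extracted the same way (Freiman's $\prod(1-e^{-jz})$ asymptotics as $q\uparrow 1$). But there is a genuine gap in the step you yourself flag as ``the catch'': your bound on the contribution of clustered $\ell$‑tuples is a \emph{count}, not a contribution. You write that there are $O(n^{\ell-1}\nu^{\ell})$ clustered tuples and that ``their total contribution is $O(n^{\ell-1}\nu^\ell/(\ell-1)!)$,'' but nowhere is the per‑tuple probability inserted, and you then compare this against the \emph{count} $n^\ell/\ell!$ rather than against the actual main term $(nh(q))^\ell/\ell!$. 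Since $h(q)\sim e^{-\mu_n}/n$ (so $nh(q)$ is only polylogarithmic), the main term is $(\text{polylog})^\ell/\ell!$, vastly smaller than any probability‑free count of order $n^{\ell-1}$. A naive bound ``probability $\le 1$'' is therefore useless, and even inserting $h^\ell$ is not quite right, because clustered tuples have \emph{fewer} active constraints and hence a \emph{larger} per‑tuple probability of being marked.

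What is actually needed — and what the paper does — is the stratification of $\ell$‑tuples by the ``type'' $S=\{k:a_k=\nu\}$, combined with the identity
\[
E_{\mathcal{Y}_1}[\xi_{i_1}\cdots\xi_{i_\ell}]\le 3\,h^{|S|}\prod_{k\notin S}\prod_{j=1}^{a_k}(1-q^j),
\]
and the key quantitative input
\[
f(q):=\sum_{a=1}^{\nu-1}\prod_{j=1}^{a}(1-q^j)=O\bigl((1-q)+(1-q)^2+\nu(1-q)^3\bigr)=O\bigl((\log n)^{-1}\bigr),
\]
so that summing over gaps in $A_3$ and over locations gives $\sum_{|S|=s}\sum_{\boldsymbol i\text{ of type }S}E[\cdots]\le 3\binom{\ell}{s}\frac{(nh)^s}{s!}(c/\log n)^{\ell-s}$, which after optimization over $s$ produces the clean relative error $O\bigl(\ell^2/(nh\log n)\bigr)$ on $E_\ell$. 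Without this $f(q)$ device your clustered correction cannot be shown small. Relatedly, your truncation point $L=(\log n)^{(1-\Delta)/2}$ in the Bonferroni step is in general too small: to make the alternating‑sum remainder super‑polynomially small uniformly in $j\le L$ one must truncate at a larger $\ell^*\asymp\sqrt{nh\log n}$ (the paper's choice), and then separately show that the cumulative effect of the $O(\ell^2/(nh\log n))$ relative errors across $j\le\ell\le\ell^*$ is $O((\log n)^{-\Delta+\varepsilon_n})$; the paper does this via the elementary identity $\ell^2\le 2[j^2+(\ell-j)+(\ell-j)_2]$ to sum the errors against the Poisson‑like kernel.
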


\begin{proof} 
That $\alpha\sim (6/\pi^2)\log n$ follows from a sharp asymptotic formula that will be proven in the next lemma. Consider the binomial moments of ${\mcal M}$.
Let $\boldsymbol i$ denote a generic $\ell$-tuple $(i_1,i_2,\dots,i_{\ell})$ with
$1\leq i_1<i_2<\cdots<i_{\ell}\leq n-2\nu$. Then
\be	\label{s3:e10}
E_{\mathcal{Y}_1}\left[\binom{{\mcal M}}{\ell}\right]=\sum_{\boldsymbol i} E_{\mathcal{Y}_1}[\xi_{i_1}\xi_{i_2}\cdots \xi_{i_{\ell}}] 
=\sum_{\boldsymbol i} \bbP_{\mathcal{Y}_1}[\xi_{i_1}=\xi_{i_2}=\cdots =\xi_{i_{\ell}}=1] .
\ee
Let $a_{\ell}=\nu$ and $a_k=\min\{\nu,i_{k+1}-i_k\}$ for $k<\ell$. 
The event $\{\xi_{i_1}=\xi_{i_2}=\cdots = \xi_{i_{\ell}}=1\}$ holds if and only if, for all $k\in [\ell],$
\[
Y_{i_k+j}\le j-1 \quad (\forall j\in [a_k]).
\]
By Corollary ~\ref{ProbY}, where the quantity $t$ of the corollary is at most $\ell\cdot \nu$, we obtain
\begin{align} \label{s3:e12}
E_{\mathcal{Y}_1}[\xi_{i_1}\xi_{i_2}\cdots \xi_{i_{\ell}}] &= \bbP_{\mathcal{Y}_1}
\left(\bigcap_{k=1}^{\ell}\bigl\{Y_{i_k+j}\leq j-1,\, \forall j \in [a_k]\bigr\}\right)	\notag 	\\
&= \bigl[1+O(\nu^4\ell^2/m)\bigr] \prod_{k=1}^{\ell}\prod_{j=1}^{a_k}(1-q^{j})
 \end{align}
for $\ell \leq (1/2)\cdot n^{1/2}\cdot (\log n)^{-4}$.

Given $S\subseteq [\ell]$, we call a tuple $\boldsymbol i$ of type $S$, if  $\{1\leq k\leq\ell: a_k = \nu \} = S$. We will show that the dominant contribution to the sum on the right hand side of equation~\eqref{s3:e10} comes from  tuples $\boldsymbol i$ of the type $S=[\ell]$. 
For a  tuple of type $[\ell]$, we have $a_k \equiv \nu$, and hence the double product on the right hand side of equation~\eqref{s3:e12} is  $\lp \prod_{j=1}^{\nu}(1-q^j)\rp^{\ell}$. Hence, using Lemma~\ref{Approximate h}, we get
\[ 	 E_{\mathcal{Y}_1}[\xi_{i_1}\xi_{i_2}\cdots \xi_{i_{\ell}}]=\bigl[1+O(\nu^4\ell^2/m)\bigr]\cdot
h^{\ell}	\cdot \lp 1+O(\alpha/n^2)\rp,	\]
where $h=h(q)= \prod_{j=1}^{\infty}(1-q^j)$. As $\alpha=\Theta(\log n)$, $\nu=\Theta(\log^2 n)$, and $m=\Theta(n\log n)$, we rewrite the previous equation as
\[
E_{\mathcal{Y}_1}[\xi_{i_1}\xi_{i_2}\cdots \xi_{i_{\ell}}]=\bigl[1+O(\ell^2\,n^{-1}\log^7n)\bigr]\cdot
h^{\ell}.
\]
Further,  let $1\leq i_1<i_2\cdots<i_{\ell}\leq n-2\nu$ be a tuple of type $[\ell]$. Setting $j_k=i_k-(k-1)(\nu-1)$, we have
\be \label{s3:e13}
1\leq j_1<j_2<\cdots <j_{\ell}\leq n-(\ell+1)\nu+(\ell-1) .					
\ee
Conversely any tuple $(j_1,\dots,j_{\ell})$ satisfying ~\eqref{s3:e13} gives rise to a type $[\ell]$ tuple. Therefore, the number of type $[\ell]$ tuples is 		
\[	{n-(\ell+1)\nu +(\ell-1)\choose\ell} = \frac{n^{\ell}}{\ell!} \lp 1-O\bigg( \frac{\ell^2 \nu}{n}\bigg)\rp.	\]
The contribution of type $[\ell]$ tuples to the sum in equation ~\eqref{s3:e10} is therefore asymptotic to 
\begin{equation}\label{s=ell}
\bigl[1+O(\ell^2\,n^{-1}\log^7n)\bigr]\cdot\frac{\bigl(nh\bigr)^{\ell}}{\ell!}.
\end{equation}
Now let $S$ be a proper subset of $[\ell]$. Let $i_1<i_2<\dots<i_{\ell}$ be a type $S$ tuple. By equation ~\eqref{s3:e12} and that $O(\nu^4\ell^2/m)=O(\ell^2n^{-1}\log^7n)$, we have
\begin{align} \label{expected-S-type}
E_{\mathcal{Y}_1}[\xi_{i_1}\xi_{i_2}\cdots \xi_{i_{\ell}}] & 	
= \bigl[1+O(\ell^2\,n^{-1}\log^7n)\bigr]\prod_{k=1}^{\ell}\prod_{j=1}^{a_k}(1-q^{j})	\\
&\le 2 \prod_{k \in S}\prod_{j_k=1}^{\nu}(1-q^{j_k}) \cdot \prod_{k \notin S}\prod_{j_k=1}^{a_k}(1-q^{j_k})	\notag	 \\
&\le 3  h^{s} \cdot \prod_{k \notin S}\prod_{j_k=1}^{a_k}(1-q^{j_k}), 	\notag
\end{align}
where $a_k=i_{k+1}-i_k<\nu$ and $s:=|S|$. The elements whose locations in $\boldsymbol i$
form $S$ and the set of $a_k$'s  together uniquely determine such a tuple. There are at most 
\[		{n-2\nu \choose s}\le {n \choose s}		\]
ways to choose those elements. Then,
 \begin{align*}
\sum_{\boldsymbol i\textrm{ is of type } S} E_{\mathcal{Y}_1}[\xi_{i_1}\xi_{i_2}\cdots \xi_{i_{\ell}}]	&\leq {n \choose s} 3h^{s} 	\prod_{k \notin S}\sum_{a_k=1}^{\nu-1}\prod_{j_k=1}^{a_k}(1-q^{j_k})		\\
&= {n \choose s}3h^{s}  f(q)^{\ell-s},
\end{align*}
where 
\be	\label{eq: defn f_a} f(q)= \sum_{a=1}^{\nu-1} f_a(q), \quad f_a(q)= \prod_{j=1}^{a}(1-q^j). \ee
Note that $f_a(q)$ is decreasing with $a$, and $f_a(q)=O((1-q)^a)$, for a fixed $a$. Then, 
\be \label{s3:e14}	f(q)= \sum_{a=1}^{\nu-1} f_a(q) = O \lp (1-q)+ (1-q)^2+\nu(1-q)^3 \rp	.	\ee
It follows from  equation~\eqref{s3:e14}, $1-q=O((\log n)^{-1})$, and $\nu=O(\log^2 n)$  that, for an absolute constant $c>0$,
\[	 f(q)\le\frac{c}{\log n}.	\]
Therefore, for  a proper subset $S$ of $[\ell]$,
\[	\sum_{\boldsymbol i \textrm{ is of type } S} E_{\mathcal{Y}_1}[\xi_{i_1}\xi_{i_2}\cdots \xi_{i_{\ell}}] \le {n\choose s}
 3h^{s} \left(\frac{c}{\log n}\right)^{\ell-s} \le 3\frac{(nh)^{s}}{s!} \left(\frac{c}{\log n}\right)^{\ell-s} . 	\]
Furthermore, given $1\le s\le\ell-1$, there are ${\ell \choose s}$ ways to choose a subset $S$ of size $s$. Then,
\[
\sum_{S\subset [\ell],\atop |S|=s} \ 	 \sum_{\boldsymbol i\textrm{ is of}\atop \textrm{ type } S} E_{\mathcal{Y}_1}[\xi_{i_1}\xi_{i_2}\cdots \xi_{i_{\ell}}]
\le \beta(s,\ell):=3\, \binom{\ell}{s}\frac{(nh)^s}{s!}\left(\frac{c}{\log n}\right)^{\ell-s}.
\]
Here,
\[
\frac{\beta(s,\ell)}{\beta(s-1,\ell)}=\frac{nh\log n}{c}\cdot\frac{\ell-s+1}{s^2}\ge\frac{nh\log n}{c}\frac{1}{\ell^2}\ge 2
\]
if
\begin{equation}\label{def,ell*}
\ell\le \ell^*:=\left\lfloor\sqrt{\frac{nh\log n}{2c}}\right\rfloor.
\end{equation}
Consequently, for $1\le \ell\le\ell^*$,
\begin{equation}\label{s<ell}
\sum_{S\subset [\ell],\atop |S|<\ell} \ 	 \sum_{\boldsymbol i\textrm{ is of}\atop \textrm{ type } S} E_{\mathcal{Y}_1}[\xi_{i_1}\xi_{i_2}\cdots \xi_{i_l}]
\le 6\binom{\ell}{\ell-1}\frac{(nh)^{\ell-1}}{(\ell-1)!}\left(\frac{c}{\log n}\right)=\frac{6c\ell^2}{nh\log n}\,\frac{(nh)^{\ell}}{\ell!}.
\end{equation}
Let
\[
E_{\ell}:=E_{\mathcal{Y}_1}\left[\binom{{\mcal M}}{\ell}\right].
\]
By equations \eqref{s=ell} and \eqref{s<ell}, we have
\[	E_{\ell}= 	\lb 1+O\lp \frac{\ell^2 \log^7n}{n}\rp  + O\lp \frac{\ell^2}{nh\log n}\rp \rb \cdot \frac{(nh)^{\ell}}{\ell!},	\]
uniformly for $1\le \ell \le \ell^*$.
By the assumption of the lemma (see equation~\eqref{nh+1/nh}) we have
\[	\frac{(\ell^2 \log^7n)/n}{\ell^2/(nh\log n)} = \frac{(nh)\cdot \log^8n}{n}= o\lp\frac{(\log\log n) \log^8n}{n} \rp	,		\]
and hence 
\be	\label{binom,ell,approx}	
E_{\ell}= 	\lb 1+ O\lp \frac{\ell^2}{nh\log n}\rp \rb \cdot \frac{(nh)^{\ell}}{\ell!}.
\ee

By Bonferroni's inequalities (see Feller \cite{Feller}, page 110) we have
\[
	\sum_{\ell=j}^{\ell+2k+1}(-1)^{\ell-j}\binom{\ell}{j} E_{\ell}	\leq	\bbP_{\mathcal{Y}_1} ({\mcal M}=j)	\leq	\sum_{\ell=j}^{\ell+2k}(-1)^{\ell-j}\binom{\ell}{j} E_{\ell}
\]
for any non-negative integer $k$. Thus, for $j<\ell^*$, we have
\begin{equation}\label{def,bonf}
\bbP_{\mathcal{Y}_1} ({\mcal M}=j)=\sum_{\ell=j}^{\ell^*-1}(-1)^{\ell-j}\binom{\ell}{j} E_{\ell}+
\mathcal{R}_{j},
\end{equation}
where
\[
|\mathcal{R}_{j}|\le \binom{\ell^*}{j} E_{\ell^*}\le 2^{\ell^*}E_{\ell^*}.
\]
It follows easily from equation \eqref{binom,ell,approx}, the definition of $\ell^*$,  condition \eqref{nh+1/nh}, and the inequality $(\ell^*)! \geq (\ell^*/e)^{\ell^*}$, that
\begin{equation}\label{calRj<}
|\mathcal{R}_{j}|\ll e^{-\sqrt{\log n}}.
\end{equation}
Next we need to bound the total contribution of the remainder terms $O\bigl[\ell^2/(nh\log n)\bigr]$ in
\eqref{binom,ell,approx} to the sum in \eqref{def,bonf}. Using
\[
\ell^2\le 2\bigl[j^2+(\ell-j)^2\bigr]=2\bigl[j^2+(\ell-j) +(\ell-j)_2],
\]
we have
\begin{align*}
\sum_{\ell\ge j} \ell^2\binom{\ell}{j}\,\frac{(nh)^{\ell}}{\ell!}
\le&\, 2\sum_{\ell\ge j}\frac{j^2+(\ell-j)+(\ell-j)_2}{j!(\ell-j)!}\,(nh)^{\ell}\\
=&\, 2e^{-nh}\,\frac{(nh)^j}{j!}\cdot e^{2nh}\bigl[j^2+nh+(nh)^2\bigr].
\end{align*}
So the absolute value of the contribution in question is at most of the order
\begin{equation}\label{cont<}
e^{-nh}\,\frac{(nh)^j}{j!}\cdot\frac{e^{2nh}}{\log n}\bigl[j^2/(nh)+1+nh\bigr].
\end{equation}
Since $nh \leq \kappa_n$,  we have
\[	\frac{e^{2nh}}{\log n}\leq \frac{e^{2\kappa}}{\log n}	 =	(\log n)^{-1+\varepsilon_n}.		\]
For $j$ satisfying
\begin{equation}\label{j<}
j\le (\log n)^{(1 -\Delta)/2},\quad\Delta\in (0,1),
\end{equation}
the sum in the square brackets is of order $(\log n)^{1-\Delta}\log\log n$.
Therefore, for $j$ satisfying \eqref{j<}, the expression \eqref{cont<} is of order
\begin{equation}\label{cont,expl}
e^{-nh}\,\frac{(nh)^j}{j!}\,\times\,(\log n)^{-\Delta+\varepsilon_n}.
\end{equation}
Combining \eqref{def,bonf}, \eqref{calRj<} and \eqref{cont,expl}, we get
\[
\bbP_{\mathcal{Y}_1} ({\mcal M}=j)=\sum_{\ell=j}^{\ell^*-1}(-1)^{\ell-j}\binom{\ell}{j} \frac{(nh)^{\ell}}{\ell!}
+O\left[e^{-nh}\frac{(nh)^j}{j!}\,\times\,(\log n)^{-\Delta+\varepsilon_n}\right].
\]
Finally,
\begin{equation*}
\sum_{\ell=j}^{\ell^*-1}(-1)^{\ell-j}\binom{\ell}{j} \frac{(nh)^{\ell}}{\ell!}=\,\sum_{\ell=j}^{\infty}(-1)^{\ell-j}\binom{\ell}{j} \frac{(nh)^{\ell}}{\ell!} - \sum_{\ell=\ell^*}^{\infty}(-1)^{\ell-j}\binom{\ell}{j} \frac{(nh)^{\ell}}{\ell!}
\end{equation*}
The first sum above is just $e^{-nh}(nh)^j/j!$. The second sum is bounded above by its first term since the sum is an alternating sum whose terms in absolute value decrease. Thus 
\begin{align*}
\sum_{\ell=j}^{\ell^*-1}(-1)^{\ell-j}\binom{\ell}{j} \frac{(nh)^{\ell}}{\ell!}=& e^{-nh}\,\frac{(nh)^j}{j!}+O\left[\binom{\ell^*}{j}\frac{(nh)^{\ell^*}}{\ell^*!}\right]\\
=&e^{-nh}\,\frac{(nh)^j}{j!}+o\bigl(e^{-\sqrt{\log n}}\bigr),
\end{align*}
and
\be \label{eq: small j}
\bbP_{\mathcal{Y}_1} ({\mcal M}=j)=e^{-nh}\,\frac{(nh)^j}{j!}\,\bigl[1+O((\log n)^{-\Delta
+\varepsilon_n})\bigr],\quad
j\le (\log n)^{(1-\Delta)/2}.			
\ee
Let $N:= (\log n)^{(1-\Delta)/2}$ and $Z$ be a Poisson random variable with mean $nh$. By equation~\eqref{eq: small j}, we have 
\be \label{DTV part1}
\sum_{j\leq N} \big|\bbP_{\mathcal{Y}_1}(\mcal M=j)- \bbP(Z=j)\big| = O\lp (\log n)^{-\Delta+\varepsilon_n}\rp.
\ee
Further, using a Chernoff-type bound,
\[
	\bbP(Z> N) = \bbP(e^{xZ}> e^{xN}) \le \frac{E[e^{xZ}]}{e^{xN}}= \frac{e^{nh(e^x-1)}}{e^{xN}},			
\]
for any $x>0$. Optimizing for $x$, we find that
\be 		\label{Chernoff}
	\bbP(Z> N)	\le e^{-nh}\frac{(enh)^N}{N^N}\ll (\log n)^{-t}, \quad \forall t>0.	
\ee
On the other hand,
\begin{align} \label{calM>N}
\bbP_{\mathcal{Y}_1}(\mcal M> N) &= 1 -\bbP_{\mathcal{Y}_1}(\mcal M\le N)	=1- \bbP(Z\le N)+O((\log n)^{-\Delta+\varepsilon_n})	\notag \\
&=\bbP(Z> N)+O((\log n)^{-\Delta+\varepsilon_n})	= O((\log n)^{-\Delta+\varepsilon_n}).
\end{align}
Combining \eqref{Chernoff} and \eqref{calM>N}, we obtain
\begin{align} \label{DTV part2}
\sum_{j> N} \big|\bbP_{\mathcal{Y}_1}(\mcal M=j)- \bbP(Z=j)\big| &\le \sum_{j> N} \big|\bbP_{\mathcal{Y}_1}(\mcal M=j)\big| + \sum_{j> N} \big|\bbP(Z=j)\big|  \notag	\\
&= O\lp (\log n)^{-\Delta+\varepsilon_n}\rp.
\end{align}
Equations \eqref{DTV part1} and \eqref{DTV part2} taken together imply: for any $\Delta \in (0,1)$,
\[	d_{\textup{TV}}\big[ \mcal M, \textup{Poisson}(nh)\big]= O\lp (\log n)^{-\Delta}\rp.	\qedhere		\]
\end{proof} 

The next lemma identifies the values of $\alpha$ for which the condition \eqref{nh+1/nh}
of Lemma \ref{factorialmoments} holds.

\begin{lemma}\label{technical2} Let 
\begin{equation}\label{alphafine}	
\al = \frac{6}{\pi^2}\lb \log n+\frac{1}{2}\log\log n+\frac{1}{2}\log(12/\pi)-\frac{\pi^2}{12}+\mu_n\rb,\quad |\mu_n|=o(\log\log n). 
\end{equation}	
Then
\begin{equation}\label{nh(q)=}
 nh(q)= \exp 
\lb -\mu_n +O\lp\frac{\log\log n}{\log n} \rp \rb,	 \textrm{ as } n\to \infty.
\end{equation}	
\end{lemma}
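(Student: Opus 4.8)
The plan is to reduce the statement to a precise asymptotic evaluation of Euler's function $h(q)=\prod_{j\ge 1}(1-q^j)$ as $q\uparrow 1$, and then substitute the prescribed value of $\alpha$. Since $\alpha=\Theta(\log n)\to\infty$, write $q=e^{-t}$ with $t=t(\alpha):=-\log\bigl(1-\tfrac1{\alpha+1}\bigr)$; expanding the logarithm gives $t=\tfrac1{\alpha+1}+\tfrac1{2(\alpha+1)^2}+O(\alpha^{-3})$, hence $t\to 0^+$ and
\[
\frac1t=\alpha+\frac12+O(\alpha^{-1}),\qquad t=O(\alpha^{-1})=O\bigl((\log n)^{-1}\bigr).
\]

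Next I would invoke the modular transformation of the Dedekind eta function, in the form
\[
\prod_{j\ge1}\bigl(1-e^{-jt}\bigr)=\sqrt{\frac{2\pi}{t}}\;\exp\!\Bigl(\frac{t}{24}-\frac{\pi^2}{6t}\Bigr)\prod_{j\ge1}\bigl(1-e^{-4\pi^2 j/t}\bigr)\qquad(t>0),
\]
either citing it or deriving it from $\eta(-1/\tau)=\sqrt{-i\tau}\,\eta(\tau)$ with $\tau=it/(2\pi)$. As $t\to0^+$ the rightmost product is $1+O\bigl(e^{-4\pi^2/t}\bigr)$ and $e^{t/24}=1+O(t)$, so $h(q)=\sqrt{2\pi/t}\,e^{-\pi^2/(6t)}\bigl(1+O(t)\bigr)$. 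Inserting the expansion of $1/t$ turns the exponent into $\tfrac{\pi^2}{6t}=\tfrac{\pi^2}{6}\alpha+\tfrac{\pi^2}{12}+O(\alpha^{-1})$ and the prefactor into $\sqrt{2\pi/t}=\sqrt{2\pi\alpha}\,\bigl(1+O(\alpha^{-1})\bigr)$, whence
\[
h(q)=\sqrt{2\pi\alpha}\;e^{-\frac{\pi^2}{6}\alpha-\frac{\pi^2}{12}}\bigl(1+O(\alpha^{-1})\bigr).
\]

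Finally I would substitute \eqref{alphafine}. It is arranged precisely so that $\tfrac{\pi^2}{6}\alpha=\log n+\tfrac12\log\log n+\tfrac12\log(12/\pi)-\tfrac{\pi^2}{12}+\mu_n$, which gives $n\,e^{-\frac{\pi^2}{6}\alpha-\frac{\pi^2}{12}}=\sqrt{\pi/(12\log n)}\;e^{-\mu_n}$; at the same time $\alpha=\tfrac{6}{\pi^2}\log n\bigl(1+O(\log\log n/\log n)\bigr)$ (here $|\mu_n|=o(\log\log n)$ enters), so $\sqrt{2\pi\alpha}=\sqrt{12\log n/\pi}\,\bigl(1+O(\log\log n/\log n)\bigr)$. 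Multiplying these, all powers of $\log n$ and all numerical constants cancel, and since the various error factors $1+O(t)$, $1+O(\alpha^{-1})$, $1+O(\log\log n/\log n)$ are all of the form $1+O(\log\log n/\log n)$, one obtains
\[
nh(q)=e^{-\mu_n}\bigl(1+O(\log\log n/\log n)\bigr)=\exp\!\Bigl[-\mu_n+O\bigl(\tfrac{\log\log n}{\log n}\bigr)\Bigr],
\]
which is \eqref{nh(q)=}; the same computation records $\alpha\sim(6/\pi^2)\log n$, the fact borrowed in Lemma \ref{factorialmoments}. The one genuinely nontrivial ingredient is the second step --- the asymptotics of $h(q)$ near the unit circle with an error term sharp enough to be absorbed into $O(\log\log n/\log n)$; once that is in hand, the rest is the routine substitution sketched above.
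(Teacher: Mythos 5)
Your proof is correct and follows essentially the same route as the paper's: you derive the asymptotic $h(q)=\sqrt{2\pi/t}\,e^{-\pi^2/(6t)}\bigl(1+O(t)\bigr)$ (the paper quotes it directly as Freiman's formula, citing Pittel, rather than deriving it from the eta modular transformation, but it is the identical estimate), then expand $1/t=\alpha+\tfrac12+O(\alpha^{-1})$ with $q=\alpha/(\alpha+1)$ and substitute the prescribed $\alpha$ exactly as the paper does. The only cosmetic difference is that the paper keeps the $\tfrac12\log\alpha+\tfrac12\log 2\pi$ inside the exponent rather than pulling out $\sqrt{2\pi\alpha}$ as a prefactor; the bookkeeping and the final error term $O(\log\log n/\log n)$ coincide.
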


\begin{proof}

By Freiman's formula (see Pittel ~\cite{Pittel}),
\[	 \prod_{j=1}^{\infty}(1-e^{-jz}) =\exp\left[-\frac{\pi^2}{6z}-\frac{1}{2}\log\frac{z}{2\pi}+O(z)\right], \]
as $z \downarrow 0$. Then,
\begin{align}\label{s3:e1}
  h(q)= \dprod_{j=1}^{\infty} (1-q^j) 	&= \prod_{j=1}^{\infty}\lp1-e^{-j\log(1/q)}\rp \notag	\\
						&= 	\exp \lb -\frac{\pi^2}{6\log (1/q)}-\frac{1}{2}\log\frac{\log(1/q)}{2\pi}+O(1-q)\rb	
\end{align}
as $q \to 1$. Letting $q= \frac{\al}{\al+1}$, and using the Taylor expansion of logarithm we get
\[
\log(1/q)=\log\left(1+\frac{1}{\al}\right)= \frac{1}{\al} \lp 1- \frac{1}{2\al}+ O(\al^{-2})\rp.	
\]
Consequently
\[
\frac{1}{\log(1/q)}= \al + 1/2+O(\al^{-1})	
\]
and so we obtain
\be \label{s3:e4}
h(q)=	\exp \lb -\frac{\pi^2}{6}\al -  \frac{\pi^2}{12} +  \frac{1}{2}\log \al+	 \frac{1}{2}\log 2\pi+ O(\al^{-1})	 \rb.	
\ee
The formula \eqref{nh(q)=}  follows from plugging \eqref{alphafine} into
 equation~\eqref{s3:e4}, and multiplying the resulting expression by $n$. 
\end{proof}

\begin{cor}\label{DclosetoPoi} For
\[ \al = \frac{6}{\pi^2}\lb \log n+\frac{1}{2}\log\log n+\frac{1}{2}\log(12/\pi)-\frac{\pi^2}{12}+\mu_n\rb,\quad |\mu_n|=o(\log\log\log n)\]
we have
\begin{equation}\label{dTV,D,Poi} 
d_{\textup{TV}}(D,\textup{Poisson}(nh(q)))=O\bigl((\log n)^{-\Delta}\bigr),\quad\forall\,\Delta\in (0,1).
\end{equation}
\end{cor}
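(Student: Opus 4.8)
The plan is to obtain \eqref{dTV,D,Poi} purely by assembling the three preceding results via the triangle inequality for total variation distance, so essentially no new computation is required. First I would feed the given value of $\al$ into Lemma \ref{technical2}. Since the hypothesis $|\mu_n|=o(\log\log\log n)$ implies in particular $|\mu_n|=o(\log\log n)$, that lemma applies and yields
\[
nh(q)=\exp\!\left[-\mu_n+O\!\left(\frac{\log\log n}{\log n}\right)\right].
\]
Note also that for this $\al$ we have $\al=\frac{6}{\pi^2}\log n\,(1+o(1))\in I$, so the standing assumption $\al\in I$ under which Lemmas \ref{Geom}--\ref{factorialmoments} were proved is in force.

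The one point that genuinely uses the strengthened control $|\mu_n|=o(\log\log\log n)$ (rather than just $o(\log\log n)$, as in Lemma \ref{technical2}) is verifying condition \eqref{nh+1/nh} of Lemma \ref{factorialmoments}. From the displayed formula for $nh(q)$, both $nh(q)$ and $1/(nh(q))$ are of the form $\exp[\pm\mu_n+o(1)]$, hence
\[
\kappa_n=nh(q)+\frac{1}{nh(q)}\le 2\exp\bigl(|\mu_n|+o(1)\bigr).
\]
Writing $|\mu_n|=\varepsilon_n\log\log\log n$ with $\varepsilon_n\to 0$ and using the identity $e^{\log\log\log n}=\log\log n$, we get $e^{|\mu_n|}=(\log\log n)^{\varepsilon_n}=o(\log\log n)$, so $\kappa_n=o(\log\log n)$, which is precisely \eqref{nh+1/nh}.

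With \eqref{nh+1/nh} established, Lemma \ref{factorialmoments} gives $d_{\textup{TV}}[{\mcal M},\textup{Poisson}(nh(q))]=O\bigl((\log n)^{-\Delta}\bigr)$ for every $\Delta\in(0,1)$, while Proposition \ref{decomposition vs marked} gives $d_{\textup{TV}}(D,{\mcal M})=O\bigl((\log n)^{-1}\bigr)$. I would then simply invoke the triangle inequality for $d_{\textup{TV}}$:
\[
d_{\textup{TV}}\bigl(D,\textup{Poisson}(nh(q))\bigr)\le d_{\textup{TV}}(D,{\mcal M})+d_{\textup{TV}}\bigl[{\mcal M},\textup{Poisson}(nh(q))\bigr]=O\bigl((\log n)^{-1}\bigr)+O\bigl((\log n)^{-\Delta}\bigr),
\]
which for $\Delta\in(0,1)$ is $O\bigl((\log n)^{-\Delta}\bigr)$, giving \eqref{dTV,D,Poi}. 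There is no real obstacle here: the corollary is a packaging step combining Proposition \ref{decomposition vs marked}, Lemma \ref{factorialmoments}, and Lemma \ref{technical2}, and the only mild subtlety — the reason the hypothesis on $\mu_n$ is tightened relative to Lemma \ref{technical2} — is exactly the verification in the second paragraph that $nh(q)$ stays in the regime $\kappa_n=o(\log\log n)$ where Lemma \ref{factorialmoments} is valid.
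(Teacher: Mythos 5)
Your proof is correct and takes essentially the same approach as the paper: verify the hypothesis $\kappa_n = o(\log\log n)$ of Lemma~\ref{factorialmoments} via Lemma~\ref{technical2}, then combine the resulting Poisson approximation for $\mcal M$ with the bound $d_{\textup{TV}}(D,\mcal M)=O((\log n)^{-1})$ from Proposition~\ref{decomposition vs marked}. You spell out the verification of condition~\eqref{nh+1/nh} more explicitly than the paper, which merely asserts that it holds, but the logical structure is identical.
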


\begin{proof}
For the given $\alpha$ and $\mu_n$, condition \eqref{nh+1/nh} of Lemma~\ref{factorialmoments} holds by Lemma~\ref{technical2}. The proof of the corollary follows immediately from Lemma \ref{factorialmoments} and equation~\eqref{dTV,D,M}.
\end{proof}

\begin{proof}[\textbf{Proof of Theorem \ref{main}}]
The proof follows directly from Lemma~\ref{threesets},  Corollary~\ref{DclosetoPoi}, and Lemma~\ref{technical2}.
\end{proof}

\section{Block sizes in a near-critical phase}\label{se: block sizes}

We now turn our attention to the sizes of the largest and the smallest indecomposable blocks
for $m$ close to the threshold value  for indecomposability of $\bs(n,m)$ whp in
Theorem \ref{main}. Of course, that $m=m_n$ is also the threshold for
connectedness of the attendant permutation graph $G_{\bs(n,m)}$, thus it is a permutation
counterpart of the connectedness threshold for Erd\H{o}s-R\'{e}nyi graph $G(n,m)$. However
the component sizes behavior in $G_{\bs(n,m)}$ and $G(n,m)$, for $m$ relatively close
to the respective threshold from below, differ significantly. In $G(n,m)$, whp there is a single giant
component and a number of isolated vertices, i.e., components of size $1$. 
In this section we will show that, for $m$ close to that in Theorem \ref{main},  whp the length of the shortest block in $\bs(n,m)$ 
(i.e., the size of the smallest component in $G_{\bs(n,m)}$) is fairly large, and there is no 
component dwarfing in size all other components. To be precise, we consider the range
\be	\label{rangealpha}
\alpha=\frac{6}{\pi^2}\left[ \log n+0.5 \log \log n+\log(12/\pi)-\pi^2/12 +\mu_n \right]	
\ee
where $\mu_n\to -\infty$ such that $\mu_n=o(\log \log \log n)$.

\begin{definition}
The size of an indecomposable block is the number of letters in it. For example, the permutation $\bs=24135867$ has three indecomposable blocks, which are $2413$, $5$, and $867$,  and the respective sizes of the blocks are $4$, $1$, and $3$. In the rest of the paper we use the notation $L_{\text{first}}$, $L_{\text{last}}$, $L_{\text{min}}$, and $L_{\max}$ for the sizes of the first block, the last block, the shortest block, and the longest block, respectively. In the example above, we have $L_{\text{first}}=L_{\max}=4$, $L_{\text{last}}=3$, and $L_{\text{min}}=1$.
\end{definition}

Recall that a decomposition point indicates where an indecomposable block ends. In other words, any indecomposable block lies between two decomposition points, that is, if $i<j$ are two decomposition points of $\bs$ and there is no other decomposition point between them, then there is an indecomposable block starting with $\sigma(i+1)$ and ending with $\sigma(j)$. The size of this indecomposable block is $j-i$. By Corollary~\ref{first-last-k}, the permutation $\bs(n,m)$ is not decomposable at the first $\nu$ positions whp. Thus, it is enough to study the decomposition points in the tail $\bX^{\nu}=X_{\nu+1}\dots X_n$, where $X_1\dots X_n$ denotes the inversion sequence of $\bs(n,m)$.  As in the previous section, equation~\eqref{PY1=PY2}
enables us to focus on the uniformly chosen random sequence $\boldsymbol Y \in \mathcal{Y}_1$. By Proposition \ref{decomposition vs marked},  whp the set of decomposition points in $\boldsymbol Y$ is the same as the set of marked points, and therefore considering the locations of marked points suffices. Provided that the first block is not the smallest or the largest block, by Lemma ~\ref{threesets} and Corollary~\ref{first-last-k}, whp, $L_{\text{min}}$  and $L_{\max}$ are the same as the sizes of the smallest and the largest blocks in $\boldsymbol Y$, respectively.

The total number of marked points is asymptotically 
Poisson($nh$) by Lemma~\ref{factorialmoments}, where $nh\to\infty$, since $\mu_n\to-\infty$. Our guiding intuition is that, since Poisson($nh$) is sharply concentrated around $nh$, its expected value, the sizes of the smallest block and the largest block, scaled by $n$, should be asymptotically close to the lengths of the
shortest subinterval and the longest subinterval, respectively, in a partition of the unit interval $[0,1]$ by $r:=\lfloor nh\rfloor$ points chosen
uniformly, and independently of each other. It is known that those two lengths are
asymptotic, respectively, to $r^{-2}Y$, with $\bbP(Y\le y)=1-e^{-y}$, and to $r^{-1}(\log r+~Z)$, with
$\bbP(Z\le z)=e^{-e^{-z}}$, see for instance Holst~\cite{Holst} and Pittel~\cite{Pittel1989}. Consequently, we expect the size of the smallest block to be asymptotic to $n\cdot  Y(nh)^{-2}$, and the size of the largest block to be asymptotic to $n\cdot (nh)^{-1} (\log (nh)+Z)$, where the distributions of $Y$ and $Z$ are as given above.
\smallskip

We call a set of consecutive integers an interval. The interval $\{a,a+1,\dots,b\}$ is denoted by $[a,b]$. The length of the interval $[a,b]$ is the number of integers in it, that is, the length of $[a,b]$ is $b-a+1$.

\begin{definition}
Let $A$ be a subset of $[n-2\nu]$. We say that $A$ is marked when all of its elements are marked.
\end{definition}

\begin{lemma}\label{markedset}
Let $A=\{a_1,\dots,a_k\}$ with $1\le a_1<a_2<\cdots<a_k\le n-2\nu$, where $k$ is a fixed positive integer. Let $d_k=\nu$, and $d_i:=a_{i+1}-a_i$ for $i=1,\dots,k-1$. Then, 
\begin{enumerate}[(i)]
\item	$\bbP_{\mathcal{Y}_1}(A \text{ is marked}) =\bigl(1+O(m^{-1}\log^8 n)\bigr) \prod\limits_{i=1}^{k}
\prod\limits_{j=1}^{\min(d_i,\nu)}(1-q^j)$, and consequently	
\item    if $\min_i d_i\ge \nu$, then		
\[
\bbP_{\mathcal{Y}_1}(A \text{ is marked}) =	\bigl(1+O(m^{-1}\log^8 n)\bigr)h^k.
\]
where $h=h(q)=\prod_{k=1}^{\infty}(1-q^k)$, and $q=\alpha/(\alpha+1)$.
\end{enumerate}
\end{lemma}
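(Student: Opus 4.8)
The plan is to rewrite the event $\{A\text{ is marked}\}$ as an intersection of upper-bound events on finitely many coordinates of $\boldsymbol Y$, discard the redundant constraints created by overlapping windows, and then invoke Corollary~\ref{ProbY}. Throughout, recall (Figure~\ref{figure3}) that $\nu=\Theta(\log^2 n)$, $\alpha=\Theta(\log n)$, and $m=\Theta(n\log n)$, while $k$ is a fixed constant.

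First I would unwind Definition~\ref{markedpointdef}: since $d_k:=\nu$, we have $\{A\text{ is marked}\}=\bigcap_{i=1}^{k}\bigcap_{t=1}^{\nu}\{Y_{a_i+t}\le t-1\}$. Consecutive windows $\{a_i+1,\dots,a_i+\nu\}$ and $\{a_{i+1}+1,\dots,a_{i+1}+\nu\}$ overlap precisely when $d_i<\nu$, and on an index $a_i+t$ with $t>d_i$ the bound inherited from $a_i$ is implied by a strictly stronger bound coming from the first later $a_j$ still covering that index. The structural claim, which I expect to be the only delicate point, is that
\[
\{A\text{ is marked}\}=\bigcap_{i=1}^{k}\ \bigcap_{t=1}^{\min(d_i,\nu)}\bigl\{Y_{a_i+t}\le t-1\bigr\}=:E',
\]
and that the coordinates $a_i+t$ occurring in $E'$ are pairwise distinct. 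The inclusion $\{A\text{ marked}\}\subseteq E'$ is trivial. For the reverse inclusion I would, given an index $a_i+t$ with $d_i<t\le\nu$, let $a_{j^\ast}$ be the largest element of $A$ smaller than $a_i+t$ (necessarily $j^\ast\ge i+1$, since $t>d_i$ forces $a_{i+1}<a_i+t$) and set $r:=(a_i+t)-a_{j^\ast}$; then $1\le r\le t-1\le\nu-1$ because $a_{j^\ast}\ge a_i+1$, and moreover $r\le d_{j^\ast}$ when $j^\ast<k$ while $r<\nu=d_k$ when $j^\ast=k$, so $r\le\min(d_{j^\ast},\nu)$ and hence the constraint $Y_{a_i+t}=Y_{a_{j^\ast}+r}\le r-1\le t-1$ is one of those defining $E'$. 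Distinctness follows from $a_i+\min(d_i,\nu)\le a_i+d_i=a_{i+1}$, which confines the block of coordinates governed by $a_i$ to $\{a_i+1,\dots,a_{i+1}\}$; and each such coordinate is $\le a_k+\nu\le n-\nu$, so it is a genuine index of $\boldsymbol Y$.

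Granting this, the number of coordinates in $E'$ is $T:=\sum_{i=1}^{k}\min(d_i,\nu)\le k\nu$, which is $O(\log^2 n)$ and in particular below $\sqrt n/(\log n)^2$ for $n$ large, so Corollary~\ref{ProbY} applies with its parameter equal to $T$ and with the bounds $t-1\in\{0,\dots,\nu-1\}$; it gives
\[
\bbP_{\mathcal Y_1}(A\text{ is marked})=\bigl[1+O\bigl((\nu^3T+\nu^2T^2)/m\bigr)\bigr]\prod_{i=1}^{k}\prod_{t=1}^{\min(d_i,\nu)}\bigl(1-q^{t}\bigr).
\]
Since $\nu^3T+\nu^2T^2=O(\nu^4)=O(\log^8 n)$, the error is $O(m^{-1}\log^8 n)$, which is part~(i). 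For part~(ii), when $\min_i d_i\ge\nu$ every inner product equals $\prod_{t=1}^{\nu}(1-q^t)$, so the displayed product is $\bigl(\prod_{t=1}^{\nu}(1-q^t)\bigr)^{k}$; Lemma~\ref{Approximate h} rewrites this as $h^k\bigl(1+O(\alpha/n^2)\bigr)^{k}=h^k\bigl(1+O(\alpha/n^2)\bigr)$, and since $\alpha/n^2=O(n^{-2}\log n)$ is dominated by $m^{-1}\log^8 n=\Theta(n^{-1}\log^7 n)$ this is absorbed into the stated error, giving $\bbP_{\mathcal Y_1}(A\text{ is marked})=\bigl(1+O(m^{-1}\log^8 n)\bigr)h^k$.

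The only real obstacle is the bookkeeping behind the identity $\{A\text{ marked}\}=E'$: checking that the overlapping marked-point windows collapse exactly onto constraints over the disjoint coordinate blocks governed by the individual $a_i$'s. Once that is in place the rest is a direct application of Corollary~\ref{ProbY} followed by Lemma~\ref{Approximate h}.
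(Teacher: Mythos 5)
Your proof is correct and follows the same approach the paper intends: collapse the overlapping windows defining the marked-point events onto a set of constraints on pairwise distinct coordinates, apply Corollary~\ref{ProbY}, and use Lemma~\ref{Approximate h} for part~(ii). The paper's proof is essentially a one-liner citing Corollary~\ref{ProbY} and equation~\eqref{h,asymp}; you have filled in the bookkeeping (the identity $\{A\text{ marked}\}=E'$ and distinctness of the surviving coordinates) that the paper leaves implicit, and that bookkeeping is correct.
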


\begin{proof}
Part $(i)$ follows directly from Corollary~\ref{ProbY}, and part $(ii)$ follows from $(i)$ combined with equation~\eqref{h,asymp}.
\end{proof}

Here is a crucial corollary.

\begin{cor}\label{noclosemarkedpair}
Whp, there is no pair of marked points $i$, $j=i+a$ with $1\le a \le \nu$. 
\end{cor}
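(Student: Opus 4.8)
The plan is to run a first-moment (union bound) argument over all candidate pairs of nearby marked points and show the expected number of such pairs tends to $0$. Work under the measure $\bbP_{\mathcal{Y}_1}$, with $\boldsymbol Y=Y_1\dots Y_{n-\nu}\in\mathcal{Y}_1$, and let $W$ count the pairs $(i,i+a)$ with $1\le a\le\nu$ and both $i$ and $i+a$ marked; since marked points lie in $[n-2\nu]$, there are at most $n$ choices for $i$. Then
\[
E_{\mathcal{Y}_1}[W]=\sum_{a=1}^{\nu}\ \sum_{i}\bbP_{\mathcal{Y}_1}\bigl(\{i,i+a\}\text{ is marked}\bigr).
\]
First I would invoke Lemma~\ref{markedset}(i) with $A=\{i,i+a\}$, so that $k=2$, $d_1=a$, $d_2=\nu$; because $a\le\nu$ we have $\min(d_1,\nu)=a$, hence
\[
\bbP_{\mathcal{Y}_1}\bigl(\{i,i+a\}\text{ is marked}\bigr)=\bigl(1+O(m^{-1}\log^8 n)\bigr)\,f_a(q)\,\prod_{j=1}^{\nu}(1-q^j),
\]
and Lemma~\ref{Approximate h} replaces the final product by $h(q)\bigl(1+O(\alpha/n^2)\bigr)$. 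So each summand is $O\bigl(h(q)f_a(q)\bigr)$, uniformly in $i$ and $a$.

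Next I would carry out the summation. Using that $f_a(q)$ is non-increasing, $\sum_{a=1}^{\nu}f_a(q)\le f(q)+f_\nu(q)\le f(q)+f_1(q)=f(q)+(1-q)$; by \eqref{s3:e14} (and the bound $f(q)\le c/\log n$ recorded right after it), together with $1-q=O((\log n)^{-1})$, this gives $\sum_{a=1}^{\nu}f_a(q)=O((\log n)^{-1})$. Therefore
\[
E_{\mathcal{Y}_1}[W]=O\bigl(n\,h(q)\cdot(\log n)^{-1}\bigr)=O\bigl(nh(q)\,(\log n)^{-1}\bigr).
\]
Finally, in the range \eqref{rangealpha} Lemma~\ref{technical2} yields $nh(q)=\exp\bigl[-\mu_n+O(\log\log n/\log n)\bigr]$; since $|\mu_n|=o(\log\log\log n)$ this is $(\log n)^{o(1)}$, so $E_{\mathcal{Y}_1}[W]=(\log n)^{-1+o(1)}\to 0$, and Markov's inequality gives $\bbP_{\mathcal{Y}_1}(W\ge 1)\to 0$, which is the assertion.

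There is no substantive obstacle: the computation is a routine union bound, and everything needed has been assembled in Lemma~\ref{markedset}, Lemma~\ref{Approximate h}, the estimate \eqref{s3:e14} for $f(q)$, and Lemma~\ref{technical2}. The one point demanding a little attention is that the bound must beat $nh(q)$, which tends (slowly) to infinity because $\mu_n\to-\infty$; this is precisely where the hypothesis $\mu_n=o(\log\log\log n)$ enters, keeping $nh(q)$ of size $(\log n)^{o(1)}$ so that it is absorbed by the factor $(\log n)^{-1}$ produced by $\sum_{a\le\nu}f_a(q)$.
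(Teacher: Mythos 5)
Your proof is correct and follows essentially the same route as the paper: a first-moment bound on pairs of nearby marked points, using Lemma~\ref{markedset}(i) together with Lemma~\ref{Approximate h} to estimate the pair probability as $O(h(q)f_a(q))$, summing $f_a(q)$ via the estimate \eqref{s3:e14} to get a factor $O((\log n)^{-1})$, and then invoking Lemma~\ref{technical2} to see that $nh(q)\sim e^{-\mu_n}$ grows slowly enough to be absorbed. No substantive differences.
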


\begin{proof}
We show that the expected number of such pairs goes to 0. Let $P_a$ be the number of those  pairs $i$, $j$ for $j=i+a$. Given $i$, by Lemma \ref{markedset}, the probability that both $i$ and $i+a$ are marked is asymptotic to
$	\prod_{k=1}^{a}(1-q^k)\prod_{l=1}^{\nu}(1-q^l)	$
by Corollary~\ref{ProbY} and
\[	\prod_{k=1}^{a}(1-q^k)\prod_{l=1}^{\nu}(1-q^l)\le 2h(q)\prod_{k=1}^{a}(1-q^k),		\]
by Lemma~\ref{Approximate h}.
 Therefore
\begin{equation*}
\EY[P_a] \le 2nh	\prod_{k=1}^{a}(1-q^k),
\end{equation*}
for all $a\le\nu$ and large enough $n$. Here, by Lemma~\ref{technical2}, we have $nh\sim e^{-\mu_n}$. Summing over $a$, we bound the expected number of pairs in question:
\begin{equation}\label{sumEPd}
\sum_{a=1}^{\nu}\EY[P_a] \le 2nh\sum_{a=1}^{\nu}f_a(q),			
\end{equation}
where $f_a(q)= (1-q)(1-q^2)\cdots(1-q^a)$. Since $f_a(q)$ is decreasing with $a$ and $f_k(q)=O((1-q)^k)$ for every fixed $k$, we have
\[	\sum_{a=1}^{\nu}f_a(q)= 	O\lp (1-q)+(1-q)^2+\nu(1-q)^3\rp	\]	
as in equation~\eqref{s3:e14}. Using $(1-q)=O((\log n)^{-1})$ and $\nu = \Theta((\log n)^2)$, it follows that the sum on the RHS
of \eqref{sumEPd} is $O\left( (\log n)^{-1} \right)$, whence
\[	\sum_{a=1}^{\nu}\EY[P_a]= O\left( nh (\log n)^{-1} \right)= O \left( \frac{e^{-\mu_n}}{\log n}	\right) = o(1). \qedhere	\] 
\end{proof}

Consider first the size of the smallest block. Let $y$ be a positive constant and define $d=d(y):=\lfloor y/(nh^2)\rfloor$. We first deal with the first and last blocks.
\begin{lemma}\label{Lfirst,Llast>} The sizes of the first and last blocks of $\bs(n,m)$ are greater than $d$ whp, i.e.,
\begin{equation*}
\lim_{n\to\infty}\bbP\{L_{\textup{first}},\,L_{\textup{last}}>d\}=1.
\end{equation*}
\end{lemma}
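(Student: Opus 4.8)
The plan is to reduce the statement to a union bound over marked points of the tractable model $\boldsymbol Y\in\mathcal{Y}_1$, and to deduce the ``last block'' half from the ``first block'' half using the front--back symmetry of Corollary~\ref{EventA}. First I would record the elementary observation that, writing $A_j$ for the event that $\bs(n,m)$ is decomposable at position $j$, one has $\{L_{\textup{first}}\le d\}=\bigcup_{j=1}^{d}A_j$ and $\{L_{\textup{last}}\le d\}=\bigcup_{j=1}^{d}A_{n-j}$, because $L_{\textup{first}}$ is the smallest decomposition point of $\bs(n,m)$ (or $n$, if there is none) and $L_{\textup{last}}$ equals $n$ minus the largest decomposition point. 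Next, since $\mu_n\to-\infty$ we have $nh(q)\sim e^{-\mu_n}\to\infty$ by Lemma~\ref{technical2}, while $nh(q)^2=(nh(q))^2/n\to 0$ because $|\mu_n|=o(\log\log\log n)$; hence $d=\lfloor y/(nh^2)\rfloor\to\infty$ and $d=o(n)$. In particular $d\le n/2$ for $n$ large, so Corollary~\ref{EventA} yields $\bbP\bigl(\bigcup_{j=1}^{d}A_{n-j}\bigr)=\bbP\bigl(\bigcup_{j=1}^{d}A_{j}\bigr)$, and it suffices to prove $\bbP\bigl(\bigcup_{j=1}^{d}A_{j}\bigr)\to 0$.

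To bound $\bbP\bigl(\bigcup_{j=1}^{d}A_j\bigr)$ I would split the union at $\nu$. The part $\bigcup_{j=1}^{\nu}A_j$ has probability $O((\log n)^{-1})$ by Corollary~\ref{first-last-k}. For a position $j\in[\nu+1,d]$, decomposability of $\bs(n,m)$ at $j$ constrains only the tail $X_{\nu+1}\dots X_n$; conditioning on $(X_1,\dots,X_\nu)$ and invoking \eqref{PY1=PY2}, the probability of $\bigcup_{j=\nu+1}^{d}A_j$ differs by $O(n^{-1})$ from $\bbP_{\mathcal{Y}_1}$ of the event that the random $\boldsymbol Y=Y_1\dots Y_{n-\nu}\in\mathcal{Y}_1$ is decomposable at some $k$ with $1\le k\le d-\nu$. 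If $\boldsymbol Y$ is decomposable at such a $k$, then (since $k\le d\le n-2\nu$ for $n$ large) in particular $Y_{k+t}\le t-1$ for all $t\in[\nu]$, i.e.\ $k$ is a marked point; by Corollary~\ref{ProbY} together with Lemma~\ref{Approximate h}, $\bbP_{\mathcal{Y}_1}(k\text{ is marked})\le 2h$ uniformly in $k$. A union bound then gives
\[
\bbP_{\mathcal{Y}_1}\bigl(\exists\,k\in[1,d-\nu]:\ \boldsymbol Y\text{ decomposable at }k\bigr)\ \le\ 2dh\ \le\ \frac{2y}{nh}\ \longrightarrow\ 0,
\]
where the middle inequality uses $d\le y/(nh^2)$ and the limit uses $nh\to\infty$. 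Collecting the three estimates, $\bbP\bigl(\bigcup_{j=1}^{d}A_j\bigr)=O((\log n)^{-1})+2y/(nh)+O(n^{-1})\to 0$, which settles both the first and the last block.

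There is essentially no analytic difficulty here; the care goes entirely into the chain of reductions — from $\bs(n,m)$ to its tail $X_{\nu+1}\dots X_n$ (Corollary~\ref{first-last-k}), from that tail to the tractable model $\boldsymbol Y\in\mathcal{Y}_1$ (equation~\eqref{PY1=PY2}), and from ``decomposition point'' down to the weaker ``marked point'' condition — together with the observation that $d$ has been calibrated so that $dh=\Theta(1/(nh))$, which tends to $0$ precisely because $\mu_n\to-\infty$. The two hypotheses on $\mu_n$ enter only through $nh\sim e^{-\mu_n}$ with $nh\to\infty$ and $nh^2\to0$; no finer control of $nh$ is required.
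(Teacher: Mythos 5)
Your proof is correct and follows essentially the same route as the paper's: after reducing (via symmetry and the chain $\bX\to\bX^{\nu}\to\boldsymbol Y$) to counting marked points in an interval of length $\asymp d$, one applies a first-moment bound, each marked point contributing $\asymp h$, so the total is $O(dh)=O(y/(nh))\to 0$. The only difference is cosmetic — you transfer to $L_{\textup{first}}$ via Corollary~\ref{EventA} and work at the front end, while the paper transfers to $L_{\textup{last}}$ via Lemma~\ref{symmetry} and works at the back end.
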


\begin{proof} Since $L_{\text{first}}$ and $L_{\text{last}}$ are equidistributed (see Lemma~\ref{symmetry}), it 
suffices to show that 
\[         \lim_{n \to \infty}\bbP(L_{\text{last}}>d)=1.	\]
Equivalently, it is enough to show that the last block of $\bX^{\nu}=X_{\nu+1}\dots X_n$ has size greater than $d$ whp, where $\bX^{\nu}$ denotes the tail of the inversion sequence of $\bs(n,m)$. Then, by Lemma~\ref{threesets}, Lemma~\ref{EventB}, and Proposition~\ref{decomposition vs marked}, it is enough to show that whp there is no marked point of $\bY \in \mathcal{Y}_1$ in the interval $[n-d, n-2\nu]$. The last assertion is immediate as the expected number of marked points in that interval
is of order $hd$, where
\[
hd\le \frac{y}{nh}\to 0. \qedhere
\]
\end{proof}

It remains to study the number of internal blocks of $\bs(n,m)$. As in the previous lemma, it is enough to consider the internal blocks of the random sequence $\bY \in \mathcal{Y}_1$. For a sequence $\y=y_1\dots y_{n-\nu} \in \mathcal{Y}_1$, we color a pair $(i,j)$ red if $\nu \le j-i \le d-1$ and both $i$ and $j$ are marked in $\y$. If there is no red pair in the random sequence $\bY$, then whp there is no block of size in $[\nu,d-1]$ of $\boldsymbol Y$ and consequently of $\boldsymbol X^{\nu}$. Let $R$ be the number of red pairs. Then,  by equation~\eqref{PY1=PY2} and Corollary~\ref{noclosemarkedpair}, the probability of the event
$	\{R=0, \ L_{\textup{min}} <d\}			$
approaches 0, and so
\be \label{R,L_min}
\lim \bbP\{L_{\text{min}}\ge d\}=\lim_{n\to\infty}\bbP_{\mathcal{Y}_1}\{R=0\}.
\ee

\begin{thm}\label{RedPoisson} For each $j$,
\[
\lim_{n\to\infty}\bbP_{\mathcal{Y}_1}\{R=j\}=e^{-y}\,\frac{y^j}{j!},
\]
i.e.,  $R$ is in the limit Poisson($y$). Consequently
\[
\lim_{n\to\infty}\bbP_{\mathcal{Y}_1}\left\{L_{\textup{min}}\ge \frac{y}{nh^2}\right\}=e^{-y}.
\]
\end{thm}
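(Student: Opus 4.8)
The goal is to show $R$, the number of red pairs, is asymptotically $\mathrm{Poisson}(y)$; the corollary about $L_{\min}$ then follows from \eqref{R,L_min} by taking $j=0$. The natural tool is the method of moments via binomial (factorial) moments: I would show that for each fixed $\ell$,
\[
E_{\mathcal Y_1}\!\left[\binom{R}{\ell}\right]\longrightarrow \frac{y^\ell}{\ell!}\qquad (n\to\infty),
\]
which, together with the standard fact that these are the binomial moments of $\mathrm{Poisson}(y)$ and that $\mathrm{Poisson}(y)$ is determined by its moments, yields convergence in distribution. This mirrors exactly the structure of the proof of Lemma~\ref{factorialmoments}, so I would reuse its machinery wherever possible.

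**Key steps, in order.** First, write $R=\sum \eta_{(i,j)}$ over pairs $i<j$ with $\nu\le j-i\le d-1$, where $\eta_{(i,j)}$ is the indicator that both $i$ and $j$ are marked. Then $\binom{R}{\ell}$ expands as a sum over $\ell$-tuples of distinct red-eligible pairs, i.e. over configurations of marked points. Second, I would argue that the dominant contribution comes from configurations consisting of $2\ell$ distinct marked points that are pairwise $\ge\nu$ apart: by Corollary~\ref{ProbY} (equivalently Lemma~\ref{markedset}(ii)), the probability that a set of $2\ell$ well-separated points is all marked is $(1+o(1))h^{2\ell}$; the number of ways to choose $\ell$ red pairs on $2\ell$ such points, with each "short side" of length in $[\nu,d-1]$, is asymptotically
\[
\frac{1}{\ell!}\bigl(n\cdot (d-\nu)\bigr)^{\ell}\sim \frac{1}{\ell!}\Bigl(\frac{y}{nh^2}\cdot n\Bigr)^{\ell}=\frac{1}{\ell!}\Bigl(\frac{y}{h^2}\Bigr)^{\ell},
\]
since $d\sim y/(nh^2)$ and $\nu=O(\log^2 n)=o(d)$ (here $nh\to\infty$ but $nh^2\to 0$, so $d\to\infty$). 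Multiplying by $h^{2\ell}$ gives $y^\ell/\ell!$, the claimed limit. Third, I would control the error terms: (a) configurations where two of the chosen marked points are within $\nu$ of each other — these are killed by Corollary~\ref{noclosemarkedpair} (the expected number of close marked pairs is $o(1)$, and this bound is inherited by any super-configuration containing such a pair); (b) configurations where the $2\ell$ points are not all distinct (two red pairs share an endpoint) — there are $O(n^{\ell-1}d^{\ell-1})$ of these against probability $O(h^{2\ell-1})$, contributing $O((nh^2 d)^{\ell-1}\cdot h)=O(h)=o(1)$; (c) the multiplicative $(1+O(m^{-1}\log^8 n))$ error in Lemma~\ref{markedset}, which is negligible since the number of tuples is polynomial in $n$ while the relative error is $o(n^{-1/2})$. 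Finally, transfer from $\mathcal Y_1$ back: the statement about $L_{\min}$ follows from \eqref{R,L_min}, Lemma~\ref{Lfirst,Llast>} (so the extremal block is internal whp), and the $j=0$ case $\bbP_{\mathcal Y_1}\{R=0\}\to e^{-y}$.

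**Main obstacle.** The delicate point is the bookkeeping that separates the "main term" tuples from all the degenerate ones, and in particular getting the counting of red-pair configurations on $2\ell$ points exactly right — one must be careful that a given $\ell$-tuple of red pairs is not overcounted, and that the constraint $\nu\le j-i\le d-1$ on each pair, rather than just $j-i\le d-1$, does not change the leading asymptotics (it does not, because $\nu/d\to 0$). A secondary subtlety is that here $nh\to\infty$ (unlike the regime $nh=O(\log\log n)$ of Lemma~\ref{factorialmoments}), so I cannot quote Lemma~\ref{factorialmoments} verbatim; however I only need the \emph{marked-set probability} estimates of Corollary~\ref{ProbY}/Lemma~\ref{markedset}, which hold for all $\alpha\in I$, and the fixed-$\ell$ moment computation never needs $\ell$ to grow, so the argument goes through. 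The inputs Corollary~\ref{noclosemarkedpair} and Lemma~\ref{Lfirst,Llast>} are exactly tailored to dispatch the two ways the heuristic could fail, so once the counting is organized cleanly the proof is routine.
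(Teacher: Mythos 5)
Your plan — compute the binomial moments $E_{\mathcal Y_1}\bigl[\binom{R}{\ell}\bigr]$ for each fixed $\ell$, show they converge to $y^{\ell}/\ell!$, and invoke the method of moments to get the Poisson($y$) limit — is exactly the paper's strategy, and your main-term computation (well-separated configurations, each contributing $\sim h^{2\ell}$, count $\sim (nd)^{\ell}/\ell!$, product $\sim (ndh^2)^{\ell}/\ell!=y^{\ell}/\ell!$) matches the paper's $E_1$. The transfer to $L_{\min}$ via \eqref{R,L_min} and Lemma~\ref{Lfirst,Llast>} is also correctly identified.

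The gap is in your error category (a). Corollary~\ref{noclosemarkedpair} says that \emph{with high probability no close marked pair exists}, i.e.\ $\sum_{|p-p'|\le\nu}\bbP_{\mathcal Y_1}(p,p'\text{ marked})=o(1)$; this is an event bound, and it does not by itself bound the quantity you actually need, namely $\sum_{\boldsymbol\tau}\bbP_{\mathcal Y_1}(\text{all of }T(\boldsymbol\tau)\text{ marked})$ over $\ell$-tuples $\boldsymbol\tau$ whose endpoint-set contains a close pair. Passing from the former to the latter requires an explicit factorization of the marking probability together with a count of the ways to extend a close pair to a full $\ell$-tuple of red pairs, and one must do this uniformly over \emph{all} the degenerate shapes: several close gaps at once, close gaps combined with shared endpoints, and the boundary case where the rightmost endpoint sits within $d+\nu$ of $n-2\nu$ (which your (a)--(c) split does not explicitly cover). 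The paper handles all of these simultaneously by associating to each $\boldsymbol\tau$ the set $T(\boldsymbol\tau)$ of distinct endpoints, partitioning the gaps into $A_1$ (gap $>d+\nu$), $A_2$ (gap in $[\nu,d+\nu]$), $A_3$ (gap $<\nu$), proving the structural Claim that eligibility forces $a_1\le a_2+a_3$ with the equality case pinned down as a boundary configuration, and then bounding each type's contribution by $O\bigl((nh)^{a_1-a_2}(\log n)^{-a_3}\bigr)$ using $nh=o(\log n)$ and $f(q)=O(1/\log n)$. That combinatorial argument is the substantive content of the paper's $E_2\to0$ step, and it is what your proposal would need to supply. (Two smaller points: in (b) the count of configurations with a shared endpoint should be $O(n^{\ell-1}d^{\ell})$, not $O(n^{\ell-1}d^{\ell-1})$, though the conclusion $o(1)$ still holds since $dh\to0$; and you do not need Corollary~\ref{noclosemarkedpair} at all in the moment computation — it is used elsewhere, in deriving \eqref{R,L_min}.)
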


\begin{proof} We need to show that, for every fixed $k\ge 1$,
\begin{equation}\label{EbinomRk}
\lim_{n\to\infty}\EY\left[\binom{R}{k}\right]=\frac{y^k}{k!}.
\end{equation}
Introducing $1_{(i,j)}$, the indicator of $\{i<j\text{ is red}\}$, we have $R=\sum_{i<j}1_{(i,j)}$.
So, denoting by $\boldsymbol\tau$ a generic $k$-tuple $\{(i_1,j_1)\prec\dots\prec (i_k,j_k)\}$
($\prec$ standing for lexicographical order on the plane), we have
\[
E^k:=\EY \left[\binom{R}{k}\right]=\sum_{\boldsymbol\tau} \EY[1_{(i_1,j_1)}\cdots 1_{(i_k,j_k)}].
\]
To evaluate $E^k$ asymptotically, 
we write $E^k=E_1+E_2$ where $E_1$ is the contribution of 
 $\boldsymbol{\tau}$'s  satisfying 
\begin{multline}\label{easytau}
1\le i_1< i_2-(d+\nu)<i_3-2(d+\nu)\\
<\cdots< i_k-(k-1)(d+\nu) \le n-(k+1)\nu-kd+1,
\end{multline}
and 
$E_2$ is the contribution of the remaining tuples $\boldsymbol{\tau}$. If a $\boldsymbol\tau$
meets \eqref{easytau} then the intervals $[i_r,j_r]$ are all disjoint  with in-between
gaps of size $\nu$ at least, and $j_k\le n-2\nu$. The number of
summands in $E_1$ is the number of ways to choose $i_1,\dots,i_k$, i.e.,
\[	\binom{n-(k+1)\nu-kd+1}{k}	\sim \binom{n}{k},		\]
times 
\[	(d-\nu)^k	\sim d^k,		\]
the number of ways to choose the accompanying $j_1,\dots,j_k$. And each of the summands
in $E_1$ is asymptotic to $(h^2)^k$ by Lemma \ref{markedset}.
Therefore
\[		E_1 \sim \frac{(ndh^2)^k }{k!} \sim \frac{y^k}{k!}.						\]

It remains to show that  $E_2\to 0$. For a generic $\boldsymbol\tau$ contributing to 
$E_2$, we introduce a set $T=T(\boldsymbol\tau)$ that consists of all distinct points in 
$\boldsymbol\tau$, i.e., 
\[
T=T(\boldsymbol\tau)=\{e\,:\,e=i_r \text{ or } j_s,\text{ for some }r\le k,\,s\le k\}.
\]
Then, 
\[	E_2= \sum_{\boldsymbol\tau} \bbP_{\mathcal{Y}_1}(T(\boldsymbol\tau) \text{ is marked}).			\]
Uniformly over sets $T\subset [n-2\nu]$ ($|T|\le 2k$),  the number of $\boldsymbol\tau$'s such that $T=T(\boldsymbol\tau)$ is bounded as $n\to\infty$. So it is enough to show that 
\[	\sum_{T}\, \bbP_{\mathcal{Y}_1}(T \text{ is marked}) \to 0, \quad n\to\infty,			\]
where the sum is taken over all {\it eligible\/} $T$'s with $|T|=t$, $t\le 2k$. By eligibility of $T$
we mean a set of conditions $T$ needs to satisfy in order to correspond to a $k$-tuple 
$\boldsymbol\tau$. To identify one such condition, we write 
\[
T= \{1\le e_1<e_2<\cdots <e_t \le n-2\nu  \},
\]			
and define 
\begin{align*}	
d_s: &=e_{s+1}-e_s \quad (1\le s \le t-1),\\
u_s: &=
\begin{cases}
\min(d_s,\nu), & \text{if } s<t ;\\
\nu, & \text{if } s=t.
\end{cases}			
\end{align*}
Clearly, the set $T$ is uniquely determined by $e_t$, the rightmost point in $T$, and $\boldsymbol d= (d_1,\dots,d_{t-1})$.
We partition $[t]$ into three sets $A_1,A_2,A_3$  as follows: $t \in A_1$, and for $s<t$,
\[		s \in \begin{cases} A_1, & \text{if } d_s>d+\nu;\\
A_2, & \text{if } \nu\le d_s\le d+\nu;\\
A_3, & \text{if } 1\le d_s<\nu.
 \end{cases}			\]
We denote $a_j=|A_j|$, and $T_j=\{e_s:s\in A_j\}$, $j=1,2,3$.

\noindent \textbf{Claim}: A necessary condition for
$T$ to be eligible is that, the numbers $a_i$ must satisfy 
\begin{equation}\label{a1<a2+a3}
a_1\le a_2+a_3.
\end{equation}
Moreover, if the equality occurs, then $a_1=a_2=k$, all the even numbers in $[2k]$ are in $A_1$, all the odd numbers in $[2k]$ are in $A_2$, and $e_{t-1}>n-  2\nu-d+1$.

\begin{proof} 
Let $\boldsymbol \tau$ be a tuple contributing to $E_2$, and consider $T=T(\boldsymbol \tau)$. If $e_s=i_r$ for some $r\in~[k]$,  then $j_r-i_r<d$, so that $e_{s+1}-e_s<d$, whence
$s\in A_2\cup A_3$. Thus if $s\in A_1$ then $e_s=j_r$ for some $r\in [k]$. Since $i_r<j_r$,  we must have $e_{s-1} \ge i_r$, whence  $e_s-e_{s-1}\le j_r-i_r <d$.  Therefore, necessarily $s-1\in A_2\cup A_3$. This shows that $a_1\le a_2+a_3$.

Now suppose $i_{r+1}-i_r \le d+\nu$ for some $r \in [k-1]$. If $i_r=i_{r+1}$, then $j_{r+1}>j_r$, and both differences $j_{r+1}-j_r$ and $j_r-i_r$ are less than $d$. Thus, all the elements of $T \cap [i_r,j_r]$, in particular the first and the last elements, lie in  $A_2\cup A_3$. Therefore a consecutive set of elements of $T$ (at least 2) lie in $A_2\cup A_3$, and as a result the inequality in \eqref{a1<a2+a3} is strict. If $i_r<i_{r+1}$, then any point in $T\cap [i_r,i_{r+1}]$  lies in $A_2\cup A_3$, and again the inequality is strict. 

 Thus, if $a_1=a_2+a_3$, then for any $1\le r \le t-1$, $j_r-i_r \ge \nu$, and $i_{r+1}-j_r\ge \nu$. Hence all the elements are distinct, $t=2k$, the odd numbers in $[t]$ belong to $A_2$, and even numbers in $[t]$ belong to $A_1$. Since the elements of $T$ form a tuple that contributes to $E_2$, there must be a violation of \eqref{easytau}, and that is the violation of the last inequality. Then, $i_{k}=e_{t-1}> n-2\nu-d+1$.
\end{proof}

To generate such a (minimally) eligible set $T$, first we choose $a_1$, $a_2$, and $a_3$ such that $a_1\le a_2+a_3$, and $a_1+a_2+a_3=t$. Next, we partition $[t]$ into subsets $A_1$, $A_2$, and $A_3$ with given cardinalities and with the rule that, if an element lies in $A_1$, then the previous element must lie in $A_2\cup A_3$. Finally, we  choose the last element $e_t$ and the vector $\boldsymbol d$ according to the restrictions imposed by $A_1$, $A_2$, and $A_3$.  Note that the
total number of  choices in these steps does not depend on $n$. Hence it is enough to show that
$\sum_{T}\Bbb P(T\text{ is marked})$ coming from the eligible $T$'s  with  given, admissible,  $A_1$, $A_2$, and $A_3$ goes to $0$ as $n\to\infty$.
 We have
\[	\prod_{j=1}^{\mu} (1-q^j)\sim \prod_{j=1}^{\infty}(1-q^j)=h=h(q),			\]
uniformly for  $\mu \ge \nu$. If $A_1$, $A_2$, and the $d_i$'s corresponding to set $A_3$ are known, then
\[	\bbP_{\mathcal{Y}_1}(T \text{ is marked}) \sim h^{a_1}h^{a_2} \prod_{i \in A_3}\prod_{j=1}^{d_i}(1-q^j).			\]
Suppose first that $a_1<a_2+a_3$. Given $A_1$, $A_2$, and $A_3$, there are at most $n^{a_1}(d+\nu)^{a_2}$ ways to choose the elements of $T$ corresponding to $A_1$ and $A_2$. Taking sum over all values of $d_i$'s corresponding to set $A_3$,
\[	\sum_{T} \bbP_{\mathcal{Y}_1}(T \text{ is marked}) \le 2 n^{a_1}d^{a_2}h^{a_1}h^{a_2} \left(\sum_{i=1}^{\nu-1}\prod_{j=1}^{i}(1-q^j)\right)^{a_3} = O \left( (nh)^{(a_1-a_2)}(\log n)^{-a_3}\right).				\]
Since $nh\sim e^{-\mu_n}=o(\log n)$, and $a_1< a_2+a_3$, we have 
\[	(nh)^{a_1-a_2}(\log n)^{-a_3}\to 0.				\]
Now suppose $a_1=a_2+a_3$. Then, by the claim, for admissibility of $A_1,A_2$, and $A_3$
with cardinalities $a_1,a_2$ and $a_3$, it is necessary that $a_1=a_2=k$ and $a_3=0$. In this case $e_{t}\in A_1$, $e_{t-1}=i_k \in A_2$, and $e_t>e_{t-1}>n-2\nu-d+1$. Thus, there are at most $d$ choices for $e_t$. Then, there are at most $n^{k-1}(d+\nu)^{k+1}$ ways to choose the elements of $T$ corresponding to $A_1$ and $A_2$. By Lemma~\ref{markedset}, the probability that such a $T$ is marked is asymptotical to $h^{2k}$. Then, for these $A_1,A_2$ and $A_3$, 
\begin{multline*}
\sum_{T}\bbP_{\mathcal{Y}_1}(T\text{ is marked})=	O\lp n^{k-1}	d^{k+1}h^{2k} \rp\\
   =
 O\lp n^{-1}d\, (ndh^2)^k\rp=O(n^{-1}d)= O\lp (nh)^{-2}\rp \to 0.	
 \end{multline*}			
In summary, we conclude that $E_2 \to 0$.

Since all the binomial moments of $R$ approach those of Poisson$(y)$, we conclude that $R$ approaches Poisson$(y)$, in distribution. Thus,
\[	\lim_{n\to\infty}\bbP_{\mathcal{Y}_1}\left\{L_{\text{min}}\ge \frac{y}{nh^2}\right\}=e^{-y}	\]
by equation~\eqref{R,L_min}.
\end{proof}

For the distribution of the size of the largest block, we define $d:= \lfloor (\log(nh)+z)/h \rfloor$, where $z$ is fixed real number. If a point $i \in [n-2\nu-d]$ is marked and the interval $[i+\nu+1,i+d]$ does not contain any marked point, then we color $i$ with blue. We denote by $B$ the number of blue points. Conditioned on the event that there is no pair of marked points within distance at most $\nu$, which happens whp by Corollary ~\ref{noclosemarkedpair}, $B$ counts the number of internal blocks (the blocks other than the first and last ones) of $\bY$ whose sizes exceed $d$. Thus, existence of a blue point implies existence of  a block in $\bY$ of size at least $d$ whp. Conversely, non-existence of a blue point implies  non-existence of an internal block whose size exceeds $d$. 

A key step  is to show that the number of blue points approaches a Poisson random variable. 
We begin with a lemma.

\begin{lemma}\label{technical3}
Let $\omega=\omega(n)$ be such that $\omega h \to \infty$. Let $k$ be a fixed positive integer, $\{i_1<\cdots< i_k\}\subset [n]$, and $I_j=[a_j,b_j], 1\le j \le k$, be intervals of length $\omega$ each, i.e., $b_j-a_j=\omega-1$. Let $i_1,\dots,i_k$
alternate with $I_1,\dots, I_k$ in such a way  that 
\begin{equation} \label{eq: conditions, i_j, I_j}
	a_{j}-i_j \ge \nu, \quad \forall j \in [k]; \qquad  i_{j+1}-b_j\ge \nu, \quad \forall j \in [k-1]; \quad  b_k\le n-2\nu.		
\end{equation}
 Let $M_j$ be the number of marked points in $I_j$ and $M= \sum_{j=1}^kM_j$. Then,
\[	\bbP_{\mathcal{Y}_1}\lp i_1,\dots,i_k \text{ are marked, } M=0 \rp  \sim h^ke^{-k\omega h}.	\]
\end{lemma}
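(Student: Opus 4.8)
The plan is an inclusion--exclusion (Bonferroni) argument that turns the event $M=0$ into an alternating sum of marked-set probabilities, each controlled by Corollary~\ref{ProbY}. Put $J=\bigcup_{j=1}^k I_j$ (so $|J|=k\omega$), let $\xi_e$ be the indicator that $e$ is marked, and note $\mathbf{1}[M=0]=\prod_{e\in J}(1-\xi_e)$. Expanding the product and using Bonferroni's inequalities gives, for every even threshold $r^\ast$,
\[
\Bigl|\bbP_{\mathcal{Y}_1}(i_1,\dots,i_k\text{ marked},\,M=0)-\sum_{r=0}^{r^\ast}(-1)^r\sigma_r\Bigr|\le\sigma_{r^\ast+1},\qquad
\sigma_r:=\sum_{S\subseteq J,\ |S|=r}\bbP_{\mathcal{Y}_1}\bigl(\{i_1,\dots,i_k\}\cup S\text{ marked}\bigr),
\]
and I would take $r^\ast$ of order $(\log n)^{1/2}$. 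In the regime \eqref{rangealpha} one has $\omega\le n$, $h=h(q)=n^{-1+o(1)}$ (by Lemma~\ref{technical2}), $\nu=\Theta(\log^2 n)$, and $\omega h\le nh=(\log\log n)^{o(1)}$, so this $r^\ast$ is far past the ``peak'' at $r\approx k\omega h$ yet small enough for Corollary~\ref{ProbY} to apply to every set of size $\le k+r^\ast+1$ (relative error $O((k+r)^2\nu^4/m)=O(n^{-1}(\log n)^{8+o(1)})$). The hypotheses \eqref{eq: conditions, i_j, I_j} force every point of $J$ to lie at distance $\ge\nu$ from every $i_j$, and distinct $I_j$'s to be $\ge 2\nu$ apart; hence, by Corollary~\ref{ProbY} (cf.\ Lemma~\ref{markedset}), each $i_j$ in $\{i_1,\dots,i_k\}\cup S$ contributes the full product $\prod_{j'=1}^\nu(1-q^{j'})\sim h$ (by \eqref{h,asymp}), and the gaps only couple points of $S$ lying in a common $I_j$, so
\[
\sigma_r=\bigl(1+O(n^{-1}(\log n)^{8+o(1)})\bigr)\Bigl(\prod_{j'=1}^\nu(1-q^{j'})\Bigr)^{\!k}[x^r]\,G(x)^k,\qquad G(x)=\sum_{s\ge0}g(s)x^s,
\]
where $g(0)=1$ and $g(s)$ is the sum, over placements of $s$ points in a length-$\omega$ interval, of $\prod f_{\min(\cdot,\nu)}(q)$ (the last point of the interval having its successor $\ge\nu$ away, hence a factor $f_\nu(q)$), with $f_a(q)=\prod_{j'=1}^a(1-q^{j'})$ as in \eqref{eq: defn f_a}. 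The signed sum of the main parts over $r\le r^\ast$ differs from $\bigl(\prod_{j'}(1-q^{j'})\bigr)^k G(-1)^k$ only by $\sum_{r>r^\ast}[x^r]G(x)^k$, which, like $\sigma_{r^\ast+1}$, is $o(h^k e^{-k\omega h})$ since $r^\ast\gg k\omega h$.

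It then remains to evaluate $G(-1)$. Classifying a placement of $s$ points in $[1,\omega]$ by whether consecutive gaps are $\ge\nu$ or $<\nu$: a fully separated placement contributes exactly $f_\nu(q)^s\sim h^s$ and there are $\binom{\omega-(s-1)(\nu-1)}{s}=\frac{\omega^s}{s!}(1+o(1))$ of them, while a placement with a short gap $a$ trades a factor $f_\nu(q)\sim h$ for $f_a(q)$, and $\sum_{a=1}^{\nu-1}f_a(q)=O((\log n)^{-1})$ by \eqref{s3:e14}; this is the crucial input, and it yields $g(s)=\frac{(\omega f_\nu(q))^s}{s!}\bigl(1+O(s^2/(\omega h\log n))\bigr)$. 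Summing against $(-1)^s$ and using $\omega h\to\infty$ together with $\omega f_\nu(q)=\omega h\,(1+O(\alpha/n^2))$, one gets $G(-1)=e^{-\omega f_\nu(q)}(1+o(1))=e^{-\omega h}(1+o(1))$. The total of the Corollary~\ref{ProbY} errors is at most $O(n^{-1}(\log n)^{8+o(1)})\sum_{r\le r^\ast}(k+r)^2[x^r]G(x)^k=O\bigl(n^{-1}(\log n)^{8+o(1)}\,e^{k\omega h}(1+k\omega h)^2\bigr)\,h^k=o(h^k e^{-k\omega h})$. Assembling everything, $\bbP_{\mathcal{Y}_1}(i_1,\dots,i_k\text{ marked},\,M=0)=\bigl(\prod_{j'}(1-q^{j'})\bigr)^k G(-1)^k(1+o(1))\sim h^k e^{-k\omega h}$.

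The delicate point — and the reason the proof must be organized this way — is that $\sum_r(-1)^r\sigma_r$ cannot be estimated term by term: the $\sigma_r$ are unimodal in $r$ with a maximum near $r\approx k\omega h$ of order $h^k(k\omega h)^{-1/2}$, which exceeds the target $h^k e^{-k\omega h}$ by a factor $e^{\Theta(k\omega h)}$, so the cancellation in the alternating sum is indispensable. The remedy is to pull out of each $\sigma_r$ the \emph{exact} combinatorial coefficient $\bigl(\prod_{j'}(1-q^{j'})\bigr)^k[x^r]G(x)^k$, whose signed sum telescopes to the closed form $\bigl(\prod_{j'}(1-q^{j'})\bigr)^k G(-1)^k$, and then to verify that the remaining multiplicative errors are $o(h^k e^{-k\omega h})$ even when summed in absolute value. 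The one quantitative input that makes this work is the bound $\sum_{a<\nu}f_a(q)=O((\log n)^{-1})$ from \eqref{s3:e14}, which suppresses clustered placements and turns $g(s)$ into the Poisson weight $(\omega f_\nu(q))^s/s!$.
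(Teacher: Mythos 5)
Your proof is correct, and at its core it is the same argument as the paper's: Bonferroni on the marked points of $I=\bigcup_j I_j$, joint probabilities controlled by Corollary~\ref{ProbY}, and the type decomposition from Lemma~\ref{factorialmoments} (with the crucial input $\sum_{a<\nu}f_a(q)=O((\log n)^{-1})$ from \eqref{s3:e14}) to show that only the ``all gaps $\ge\nu$'' placements matter. The cosmetic difference is in the bookkeeping: the paper conditions on $\{i_1,\dots,i_k\text{ marked}\}$, shows the conditional binomial moments of $M$ are $\sim (k\omega h)^\ell/\ell!$, and deduces a full $d_{\textup{TV}}[M,\text{Poisson}(k\omega h)]$ bound before reading off $\bbP(M=0)$ and multiplying back by $\sim h^k$, whereas you work directly with the joint event and package the per-interval placement sums into $G(x)$, so that the alternating sum collapses to $\bigl(\prod_{j'\le\nu}(1-q^{j'})\bigr)^k G(-1)^k$. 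Your route proves exactly what is needed for $\bbP(M=0)$ rather than a TV bound, which is a modest simplification; the ingredients and error estimates are the same.

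Two small points. First, in your total error bound you quote the Corollary~\ref{ProbY} relative error once as $O((k+r)^2\nu^4/m)=O(n^{-1}(\log n)^{8+o(1)})$ and then reinsert the factor $(k+r)^2$ inside the sum $\sum_{r\le r^\ast}(k+r)^2[x^r]G(x)^k$; this double counts the $(k+r)^2$, which is harmless (it only overestimates) but should be cleaned up. Second, the step $G(-1)=e^{-\omega f_\nu(q)}(1+o(1))$ is not a consequence of $\omega h\to\infty$ alone: the absolute error accumulated from the relative error $O(s^2/(\omega h\log n))$ in $g(s)$ is of order $e^{\omega h}\omega h/\log n$, and you need this to be $o(e^{-\omega h})$, i.e.\ $e^{2\omega h}\omega h=o(\log n)$. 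This holds in the regime \eqref{rangealpha} because $\omega h\le nh+O(1)=e^{-\mu_n(1+o(1))}=(\log\log n)^{o(1)}$, exactly as you noted earlier, so the conclusion stands; the paper's $d_{\textup{TV}}=O((\log\omega)^{-\Delta})$ bound has the same implicit restriction, so this is a shared feature of the lemma as applied rather than a gap in your argument.
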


\begin{proof}
Let $I$ be the union of the intervals $I_1,\dots,I_k$ and let
\[	E_{\ell}:=\EY \left[\binom{M}{\ell}\,\bigg|\,\{i_1,\dots,i_k\text{ are marked}\}\right].	\]
By definition
\be \label{p_j-i_j}
	E_{\ell}= \frac{1}{ \EY[ \xi_{i_1}\times \cdots \times \xi_{i_k}]}  \times	 \sum_{p_1<\cdots<p_{\ell} \atop p_j\in I\ \forall j}   \EY[\xi_{p_1}\times\cdots\times \xi_{p_{\ell}}\times \xi_{i_1}\times \cdots \times \xi_{i_k}],	
\ee
where $\xi_p$ is the indicator of the event that $p$ is marked. Each term of the sum in equation~\eqref{p_j-i_j} is the probability of  intersection of at most $(k+\ell)\nu$ events $\{Y_a\le b\}$ ($b<\nu$). In fact, by Corollary~\ref{ProbY} and conditions given in \eqref{eq: conditions, i_j, I_j}, we have: uniformly for $\ell \leq (1/2)\cdot n^{1/2}\cdot(\log n)^{-4}$,
\begin{align}\label{(k+ell) indicators}
\EY[\xi_{p_1}\times\cdots\times \xi_{p_{\ell}}\times \xi_{i_1}\times \cdots \times \xi_{i_k}]	&=  \bigl(1+O(m^{-1}\ell^2\log^8 n)\bigr)   \notag \\
  &\times       \lp    \prod_{r=1}^{\ell}\prod_{s=1}^{\eta_r}	(1-q^s) \rp	  \lp \prod_{j=1}^{\nu}(1-q^j)^k\rp ,
\end{align}
where $\eta_r:=\min(\nu,p_{r+1}-p_r)$ for $r<\ell$ and $\eta_{\ell}:=\nu$. Similarly, 
\begin{equation}\label{k indicators}
 \EY[ \xi_{i_1}\times \cdots \times \xi_{i_k}]= \bigl(1+O(m^{-1}\log^8 n)\bigr) \times  \prod_{j=1}^{\nu}(1-q^j)^k.
\end{equation}
Using Eqs. \ref{(k+ell) indicators} and \ref{k indicators} in equation \eqref{p_j-i_j}, we get
\[	E_{\ell}= \bigl(1+O(m^{-1}\ell^2\log^8 n)\bigr) \times   \sum_{p_1<\cdots<p_{\ell} \atop p_j\in I\ \forall j}   \   \prod_{r=1}^{\ell}\prod_{s=1}^{\eta_r}	(1-q^s).	\]
The rest is very similar to the proof of Lemma~\ref{factorialmoments} and we omit the details but note the crucial steps. We separate the set of points $\{p_1,\dots,p_{\ell}\}$ into two classes such that the first class consists of the points with all $\eta_a=\nu$ and the second class consists of the rest, and show that the main contribution comes from the first class of points. Note that, the total length of the union of the intervals $I$ is $k\omega$. For $\ell\le \varepsilon (k \omega h\log n)^{1/2}$ and $\varepsilon>0$ sufficiently small, similar to equation~\eqref{binom,ell,approx}, we obtain
\begin{align*}
 E_{\ell} = &\left[1+O\left(\frac{\ell^2}{\omega h\log n}\right)+O(m^{-1}\ell^2\log^8 n)\right]
\times \frac{ (k\omega h)^{\ell}}{\ell!}\\
=&\left[1+O\left(\frac{\ell^2}{\omega h\log n}\right)\right]\times\frac{(k\omega h)^{\ell}}{\ell!}.
\end{align*}
Arguing as in the end of the proof of Lemma~\ref{factorialmoments}, we conclude that for $M$, 
conditioned on the event $\{i_1,\dots,i_k\text{ are marked}\}$,
\[
d_{\textup{TV}}\bigl[M, \text{Poisson }(k\omega h)\bigr]=O\bigl((\log \omega)^{-\Delta}\bigr),\quad\forall\,\Delta\in (0,1).
\]
Therefore
\[
\bbP_{\mathcal{Y}_1}\big(M=0\,|\,\{i_1,\dots,i_k\text{ are marked}\}\big)=e^{-k\omega h} +O\bigl((\log \omega)^{-\Delta}\bigr),
\]
so that 
\[
\bbP_{\mathcal{Y}_1}(M=0\text{ and }i_1,\dots,i_k\text{ are marked})\sim h^k e^{-k\omega h}. \qedhere
\]
\end{proof}

\begin{lemma}\label{BluePoisson}
$B$ approaches  in distribution a Poisson random variable with mean $e^{-z}$ as $n \to \infty$.
\end{lemma}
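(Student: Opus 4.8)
The plan is to prove Poisson convergence of $B$ by the method of binomial moments: I show that for every fixed integer $k\ge 1$,
\[
\EY\left[\binom{B}{k}\right]\longrightarrow\frac{(e^{-z})^k}{k!}\qquad(n\to\infty),
\]
which, since a Poisson law is determined by its moments, forces $B$ to converge in distribution to $\textup{Poisson}(e^{-z})$. Write $B=\sum_{i}1_i$ over $i\in[n-2\nu-d]$, where $1_i=1$ precisely when $i$ is marked and the interval $I_i:=[i+\nu+1,\,i+d]$ (of length $\omega:=d-\nu$) contains no marked point, so that $\EY[\binom{B}{k}]=\sum_{i_1<\cdots<i_k}\bbP_{\mathcal{Y}_1}(i_1,\dots,i_k\text{ all blue})$. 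I split this sum as $E_1+E_2$, where $E_1$ is the contribution of the \emph{well-separated} tuples, those with $i_{r+1}-i_r\ge d+\nu$ for every $r$, and $E_2$ is the contribution of the rest.

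For a tuple counted by $E_1$ the points $i_1,\dots,i_k$ and the intervals $I_{i_1},\dots,I_{i_k}$ alternate and are mutually $\nu$-separated in the sense of \eqref{eq: conditions, i_j, I_j}: here $a_j-i_j=\nu+1\ge\nu$, $i_{j+1}-b_j=i_{j+1}-i_j-d\ge\nu$, and $b_k=i_k+d\le n-2\nu$ since $i_k\le n-2\nu-d$; moreover $\{i_1,\dots,i_k\text{ all blue}\}$ is exactly the event treated in Lemma~\ref{technical3} with $\omega=d-\nu$, and $\omega h=dh-\nu h\to\infty$ because $\nu h\to 0$ while $dh\ge\log(nh)+z-h\to\infty$. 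Hence Lemma~\ref{technical3} gives $\bbP_{\mathcal{Y}_1}(i_1,\dots,i_k\text{ all blue})\sim h^ke^{-k\omega h}$, uniformly over these tuples. From $d=\lfloor(\log(nh)+z)/h\rfloor$ we get $dh=\log(nh)+z+O(h)$, so $\omega h=\log(nh)+z+o(1)$ and $e^{-\omega h}\sim e^{-z}/(nh)$; thus each such probability is $\sim h^k(e^{-z}/(nh))^k=(e^{-z})^k/n^k$. The number of well-separated tuples is $\binom{n-O(k(d+\nu))}{k}\sim n^k/k!$, because $d=O\big(n\log(nh)/(nh)\big)=o(n)$ (as $nh\to\infty$) and $\nu=o(n)$. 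Therefore $E_1\sim(n^k/k!)\cdot(e^{-z})^k/n^k=(e^{-z})^k/k!$.

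The remaining task, and the main obstacle, is to show $E_2\to 0$: the delicate point is that, unlike in the marked-point count of Lemma~\ref{factorialmoments}, the blue event involves a whole interval, so near-diagonal tuples must be handled with an extra $\nu h$ saving, the key simplification being that many near-collision configurations are outright impossible. Every tuple counted by $E_2$ has a \emph{bad gap}, an index $r$ with $a:=i_{r+1}-i_r<d+\nu$, which I classify by its size. If $\nu<a\le d$ then $i_{r+1}\in I_{i_r}$, so $i_{r+1}$ cannot be marked once $i_r$ is blue, and the tuple contributes exactly $0$. If $a\le\nu$ the tuple contains a pair of marked points at distance at most $\nu$; bounding $\bbP_{\mathcal{Y}_1}(\text{all }k\text{ marked})$ by $2f_a(q)h$ for this pair times $(Ch)^{k-2}$ for the remaining points and summing over $a$ (using $\sum_{a\le\nu}f_a(q)=O(1/\log n)$ exactly as in Corollary~\ref{noclosemarkedpair}) makes the total contribution $O\big((nh)^{k-1}/\log n\big)=o(1)$, since $nh=o(\log\log n)$ by Lemma~\ref{technical2}. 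Finally, the tuples whose only bad gaps have size $a\in(d,d+\nu)$ decompose into \emph{clusters} of points whose consecutive gaps lie in $(d,d+\nu)$ and which are mutually separated by $\ge d+\nu$; within a cluster of size $c$ the $c$ points are pairwise more than $\nu$ apart and lie at distance $\ge\nu$ from the forbidden interval $I_{i}$ attached to the rightmost point $i$ of the cluster, so the argument proving Lemma~\ref{technical3}, applied to these $c$ points together with that single interval, bounds the probability that all $c$ are blue by $O(h^ce^{-\omega h})=O(h^{c-1}e^{-z}/n)$, while there are $O(n\,\nu^{c-1})$ ways to position the cluster. Summing over the finitely many cluster-decompositions of $[k]$ into $p<k$ clusters, the corresponding contribution is $O\big((\nu h)^{k-p}(e^{-z})^p\big)$, which tends to $0$ because $k-p\ge 1$ and $\nu h\to 0$. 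Combining the three cases gives $E_2\to 0$, and hence $\EY[\binom{B}{k}]\to(e^{-z})^k/k!$ for every fixed $k$, which proves the lemma.
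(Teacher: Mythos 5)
Your proof follows the paper's for $E_1$ (well-separated tuples), including the reduction to Lemma~\ref{technical3} with $\omega=d-\nu$ and the asymptotics $e^{-\omega h}\sim e^{-z}/(nh)$, so that part is fine and matches the paper. For $E_2$, however, you take a different route that has a genuine gap. The paper's key device is to \emph{shorten} the forbidden intervals to $I'_j:=[i_j+\nu+1,\,i_j+d-\nu]$: since every tuple satisfying \eqref{condition-k-tuples} has $i_{j+1}-i_j>d$, the points $i_j$ and intervals $I'_j$ then alternate at mutual distance $\ge\nu$, so Lemma~\ref{technical3} applies \emph{uniformly} to every $E_2$ tuple and yields $\bbP_{\mathcal{Y}_1}(\boldsymbol i\text{ blue})\le 3h^ke^{-kdh}$; combined with the count $O(n^{k-1}d)$ of $E_2$ tuples this gives $E_2=O(d/n)\to 0$ in one line. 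You instead keep the full intervals $I_j=[i_j+\nu+1,i_j+d]$, and for a cluster of $c$ points at gaps in $(d,d+\nu)$ these intervals \emph{fail} the separation hypothesis of Lemma~\ref{technical3} (the next point sits within distance $<\nu$ of the previous interval's right end). You patch this by invoking ``the argument proving Lemma~\ref{technical3}, applied to these $c$ points together with that single interval'' — but that is a $c$-points-plus-one-interval configuration which the stated lemma does not cover, so the cluster bound $O(h^ce^{-\omega h})$ is not actually justified; the shortened-interval trick is exactly what is needed to avoid this. A second, smaller issue: in the $a\le\nu$ case the per-tuple bound $2f_a(q)h\cdot(Ch)^{k-2}$ is not a valid upper bound when other gaps in the tuple are also small (those factors $f_{\min(d_r,\nu)}(q)$ can be close to $1$, not $O(h)$); the correct way to get $O((nh)^{k-1}/\log n)$ is a type-$S$ decomposition as in Lemma~\ref{factorialmoments}, summing over gap sizes rather than bounding each tuple. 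On the positive side, you are right to notice that tuples with a gap $\le\nu$ can have all points blue with positive probability, so they are not automatically excluded from the binomial-moment sum; the paper restricts to \eqref{condition-k-tuples} without comment, and your observation (once made rigorous) fills that small omission.
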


\begin{proof}
As before, we need to evaluate  the binomial moments  of $B$. By definition,
\be \label{blue-fact-mom}	
\EY \left[ {B \choose k }\right] =	\sum_{\boldsymbol i} \bbP_{\mathcal{Y}_1}(\boldsymbol i \text{ is blue}),	
\ee
where the sum is over all $k$-tuples $\boldsymbol i =(i_1,\dots,i_k)$ such that 
\be \label{condition-k-tuples}	1\le i_1 <i_2-d<\cdots<i_k-(k-1)d	\le n-2\nu-kd.		\ee
We write
\[	\EY\left[ {B \choose k }\right] =	\sum_{\boldsymbol i} \bbP_{\mathcal{Y}_1}(\boldsymbol i \text{ is blue})= E_1+E_2				\]
where $E_1$ is the contribution of tuples $\boldsymbol i$ such that $i_j-i_{j-1}>d+\nu$, for $j=2,\dots,k$, and $E_2$ is the contribution of the remaining tuples $\boldsymbol i$. We will determine
$\lim E_1$, and show that $\lim E_2=0$. 

For a tuple $\boldsymbol i$ contributing to $E_1$, let $I_j:=[i_j+\nu+1,i_j+d]$ ($j=1,\dots,k$); $I_j$
has size $d-\nu$. Let $M_j$ be the number of marked points in $I_j$, and $M=\sum_j M_j$. Note that $\{i_1,\dots,i_k, I_1,\dots,I_k\}$ satisfy the conditions of Lemma~\ref{technical3}. Therefore
\[	\bbP_{\mathcal{Y}_1}(\boldsymbol i \text{ is blue})= \bbP_{\mathcal{Y}_1}( i_1,\dots,i_k \text{ are marked, } M=0) \sim	h^ke^{-k(d-\nu)h}	.		\]
Now set $x_j:=i_j-(j-1)(d+\nu)$. Then the numbers $x_1,\dots,x_k$ satisfy 
\[	1\le x_1<x_2<\cdots<x_k\le n-(k+1)\nu	-kd,		\]
so the number of tuples that contribute to $E_1$ is 
\[	{n-(k+1)\nu	-kd \choose k} 	\sim {n \choose k} \sim \frac{n^k}{k!}	.		\]
Thus
\[	E_1\sim 	\frac{n^k}{k!}	h^ke^{-k(d-\nu)h} \sim \frac{(nh)^k}{k!}e^{-k(\log (nh)+z)}	= \frac{e^{-kz}}{k!}.	\]

It remains to show that $E_2 \to 0$ as $n \to \infty$.
For a generic  $\boldsymbol i$ contributing to $E_2$, we now define the intervals 
$I'_j:=[i_j+\nu+1,i_j+d-\nu]$. The event that $\{\bi \text{ is blue}\}$ is contained in the event $\{\bi \text{ is marked, there is no marked point in } \cup_jI'_j\}$.  The length of each interval is $d-2\nu$, and the set 
$	\{i_1,\dots,i_k,I'_1,\dots,I'_k\}			$
satisfies the conditions of Lemma~\ref{technical3} , whence 
\[	\bbP_{\mathcal{Y}_1}(\bi \text{ is blue})\le 	2 h^ke^{-k(d-2\nu)h}\le 3 h^ke^{-kd h}.				\]
To bound $E_2$ we group the tuples $\boldsymbol i$ by their type 
\[	S=S(\boldsymbol i):=  \{j<k: i_{j+1}-i_j >d+\nu \},		\]
and note $S(\boldsymbol i)\subset [k-1]$ for $\boldsymbol i$ in question.  Note that there are at most $n^{|S|+1}(d+\nu)^{k-1-|S|}$ tuples of a given type $S$.  Thus, the number of tuples that contribute to $E_2$ is $O(n^{k-1}d)$. So
\[	E_2= O\left(	n^{k-1}dh^ke^{-kdh}	\right)= O\left(	n^{k-1}dh^k(nh)^{-k}e^{-kz}	\right)	=O(d/n).	\]
This finishes the proof.
\end{proof}

The rest is short.
\begin{lemma}\label{lastd_2}
Let $I\subset [n-2\nu]$ be an interval of length $(d-\nu-1)$ and let $N$ be the number of marked points in $I$. Then $N\geq 1$ whp.
\end{lemma}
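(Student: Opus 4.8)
The plan is to prove the quantitatively stronger statement that $\EY[N]\to\infty$ while $\mathrm{Var}(N)=o\bigl(\EY[N]^2\bigr)$, and then to conclude $\bbP_{\mathcal{Y}_1}(N=0)\to0$ by Chebyshev's inequality. The first thing I would do is record the orders of magnitude for $\alpha$ as in \eqref{rangealpha}. Since $\mu_n\to-\infty$, Lemma~\ref{technical2} gives $nh=nh(q)\to\infty$, so with $d=\lfloor(\log(nh)+z)/h\rfloor$ one has $hd=\log(nh)+O(h)\to\infty$. Moreover $h=nh/n$ with $nh=e^{o(\log\log\log n)}$, so $h=O(\log\log n/n)$ and, as $\nu=\Theta(\log^2 n)$, $h\nu=o(1)$; hence the interval length $|I|=d-\nu-1$ satisfies $h\,|I|=hd-h\nu-h\sim\log(nh)\to\infty$, and in particular $\nu=o(|I|)$.

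Next I would compute the first two moments of $N=\sum_{i\in I}\xi_i$, where $\xi_i$ is the indicator that $i$ is a marked point (Definition~\ref{markedpointdef}). By Lemma~\ref{markedset} with $k=1$ together with Lemma~\ref{Approximate h}, $\bbP_{\mathcal{Y}_1}(\xi_i=1)=(1+o(1))h$ uniformly in $i\in I$, so $\EY[N]=(1+o(1))\,h\,|I|\to\infty$. For the second binomial moment, $\EY\bigl[\binom{N}{2}\bigr]=\sum_{\{i<j\}\subset I}\bbP_{\mathcal{Y}_1}(\xi_i=\xi_j=1)$, and Lemma~\ref{markedset}(i) shows that a pair with $j-i\ge\nu$ contributes $(1+o(1))h^2$ while a pair with $j-i=a<\nu$ contributes $(1+o(1))h\,f_a(q)$. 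There are $\binom{|I|}{2}-O(|I|\nu)=(1+o(1))|I|^2/2$ pairs of the first kind and $O(|I|\nu)$ of the second, so, using $\sum_{a=1}^{\nu-1}f_a(q)=O(1/\log n)$ from \eqref{s3:e14},
\[
2\,\EY\Bigl[\binom{N}{2}\Bigr]=(1+o(1))\,(h|I|)^2+O\!\left(\frac{h|I|}{\log n}\right)=(1+o(1))\,\EY[N]^2+O\!\left(\frac{\EY[N]}{\log n}\right).
\]
Hence $\mathrm{Var}(N)=\EY[N]+2\,\EY\bigl[\binom{N}{2}\bigr]-\EY[N]^2=o\bigl(\EY[N]^2\bigr)$, the last step because $\EY[N]\to\infty$ makes both $\EY[N]$ and $\EY[N]/\log n$ negligible against $\EY[N]^2$.

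Chebyshev's inequality then gives $\bbP_{\mathcal{Y}_1}(N=0)\le\mathrm{Var}(N)/\EY[N]^2\to0$, which is exactly the assertion that $N\ge1$ whp. The only genuinely nontrivial ingredient is the magnitude estimate $h\,|I|\to\infty$, i.e.\ that $I$ is long on the natural scale $1/h$, which boils down to comparing $d\sim\log(nh)/h$ with $\nu=\Theta(\log^2 n)$; the rest is bookkeeping, the main point being that the $(1+o(1))$ corrections coming from Lemma~\ref{markedset} are uniform over all $\Theta(|I|^2)$ pairs (the error there is the uniform $O(m^{-1}\log^8 n)=O(\log^7 n/n)$). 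A heavier alternative would be to rerun the binomial-moment computation of Lemma~\ref{technical3} with no conditioning points and a single interval of length $|I|$, which yields that $N$ is asymptotically $\mathrm{Poisson}(h|I|)$ and hence $\bbP_{\mathcal{Y}_1}(N=0)\sim e^{-h|I|}\to0$; the second-moment route above is shorter.
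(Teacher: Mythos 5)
Your proof is correct and essentially identical to the paper's: both compute $E_{\mathcal{Y}_1}[N]\sim dh$ and $E_{\mathcal{Y}_1}\bigl[\binom{N}{2}\bigr]\sim\binom{d-\nu-1}{2}h^2$ and then conclude by Chebyshev's inequality using $dh\to\infty$. The only difference is that you make explicit the near/far pair split and the error bookkeeping that the paper defers to ``calculations similar to those in the proof of Lemma~\ref{factorialmoments}.''
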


\begin{proof}
We have
\[	\EY[N] =\sum_{a \in I}\bbP_{\mathcal{Y}_1}(a \text{ is marked}) \sim (d-\nu-1)h	\sim dh		\]
by Lemma~\ref{markedset}. On the other hand, by  calculations similar to those in the
proof of Lemma ~\ref{factorialmoments}, we find
\[	\EY\left[ {N \choose 2} \right] = {d-\nu-1 \choose 2}h^2+O\lp dh(\log n)^{-1}\rp.				\]
Consequently, $E[N^2] \sim h^2d^2$. Since, $dh \to \infty$ as $n \to \infty$, we conclude by Chebyshev's inequality that $N$ is positive whp.
\end{proof}

\begin{cor}
For a fixed real number $z$, the largest block of $\bs(n,m)$ has the following limiting distribution
\[ \lim_{n\to\infty}	\bbP\left(L_{\textup{max}}\le \frac{\log (nh)+z}{h}\right) = e^{-e^{-z}}.			\]
\end{cor}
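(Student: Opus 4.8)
The plan is to derive this corollary from Lemma~\ref{BluePoisson} in exactly the same spirit as Theorem~\ref{RedPoisson} was used for $L_{\textup{min}}$. Set $d:=\lfloor(\log(nh)+z)/h\rfloor$; since $L_{\textup{max}}$ is integer-valued, $\{L_{\textup{max}}\le(\log(nh)+z)/h\}=\{L_{\textup{max}}\le d\}$. Lemma~\ref{BluePoisson} with $j=0$ gives $\bbP_{\mathcal{Y}_1}(B=0)\to e^{-e^{-z}}$, so it suffices to show that the events $\{L_{\textup{max}}\le d\}$ and $\{B=0\}$ differ by an event of probability $o(1)$.

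First I would record the dictionary between $B$ and block sizes. Recall that $B$ counts the marked points $i\in[n-2\nu-d]$ of $\bY\in\mathcal{Y}_1$ for which $[i+\nu+1,i+d]$ contains no marked point. By Corollary~\ref{noclosemarkedpair}, whp no two marked points of $\bY$ lie within distance $\nu$, and by Proposition~\ref{decomposition vs marked} the marked points are exactly the decomposition points whp; on the intersection of these whp events the block of $\bY$ immediately following each marked point has a well-defined size, and a short check (a marked point left of an internal block of size exceeding $d$ is automatically in $[n-2\nu-d]$, and a blue point has its next marked point at distance $>d$) shows that $B$ equals the number of \emph{internal} blocks of $\bY$ — all but its first and last — of size exceeding $d$. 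Hence, on that event, $\{B=0\}$ is precisely the event that every internal block of $\bY$ has size at most $d$. I would then transfer this from $\bY$ to $\bs(n,m)$: by Corollary~\ref{first-last-k} the permutation $\bs(n,m)$ has, whp, no decomposition point among its first $\nu$ or last $\nu$ positions, so its decomposition points are those of the tail $X_{\nu+1}\dots X_n$ shifted by $\nu$; by \eqref{PY1=PY2} the block structure of that tail agrees, up to total variation $O(n^{-1})$, with that of $\bY$. Consequently, whp the block sizes of $\bs(n,m)$ other than $L_{\textup{first}}$ coincide with the block sizes of $\bY$ other than its first, while $L_{\textup{first}}$ itself exceeds the first block size of $\bY$ by exactly $\nu$.

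It remains to control the two boundary blocks, and this is the one step needing genuine care rather than a copy of the $L_{\textup{min}}$ argument. Since $\mu_n\to-\infty$ with $nh\sim e^{-\mu_n}$, one has $d=o(n)$, so both an initial interval $[1,d-\nu-1]$ and a final interval of the same length lie inside $[n-2\nu]$; applying Lemma~\ref{lastd_2} to each, whp $\bY$ has a marked (hence decomposition) point in each of them, so the first and last decomposition points of $\bY$ are within $d-\nu-1$ of the corresponding end. Combined with the previous paragraph this gives $L_{\textup{first}},L_{\textup{last}}\le d-1<d$ whp (one could instead handle the last block by invoking that $L_{\textup{first}}$ and $L_{\textup{last}}$ are equidistributed, Lemma~\ref{symmetry}). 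Therefore, on the intersection of all the whp events above, $B=0$ forces every block of $\bs(n,m)$ — internal ones by the dictionary, boundary ones by $L_{\textup{first}},L_{\textup{last}}<d$ — to have size at most $d$, so $L_{\textup{max}}\le d$; and conversely $L_{\textup{max}}\le d$ forces every internal block to have size at most $d$, i.e.\ $B=0$. Hence
\[
\bbP\!\left(L_{\textup{max}}\le\frac{\log(nh)+z}{h}\right)=\bbP(L_{\textup{max}}\le d)=\bbP_{\mathcal{Y}_1}(B=0)+o(1)\longrightarrow e^{-e^{-z}},
\]
as required. The main obstacle, as indicated, is not any new estimate but this bookkeeping: making sure that the extreme (first and last) blocks cannot secretly be the longest, which is exactly what Lemma~\ref{lastd_2} together with Corollary~\ref{first-last-k} rules out; everything else is routine.
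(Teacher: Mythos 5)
Your proposal is correct and follows essentially the same route as the paper's proof: reduce to the blue-point count via Lemma~\ref{BluePoisson}, use Corollary~\ref{noclosemarkedpair} and Proposition~\ref{decomposition vs marked} to identify blue points with long internal blocks of $\bY$, transfer to $\bs(n,m)$ via equation~\eqref{PY1=PY2} and Corollary~\ref{first-last-k}, and rule out the first and last blocks being the longest by applying Lemma~\ref{lastd_2} to an interval near each end. The only cosmetic difference is that you flag the alternative of handling $L_{\textup{last}}$ via the symmetry of Lemma~\ref{symmetry}; the paper instead applies Lemma~\ref{lastd_2} directly to both ends, as you also do.
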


\begin{proof}
We make use of the strong relation between $\bs(n,m)$ (or its inversion sequence $\bX$) and $\bY \in \mcal Y_1$. Recall that the set of decomposition points of $\bs(n,m)$ is the same as the set of decomposition points of $\bX$.
In other words, the blocks of $\bs(n,m)$ are the same as the blocks of $\bX$. On the other hand, the decomposition points of $\bX$ that are not contained in the set $[\nu]$ are the same as the decomposition points of $\bX^{\nu}=X_{\nu+1}\dots X_n$, i.e., almost all the blocks of $\bX$ and $\bX^{\nu}$ are the same.
By Lemma~\ref{threesets}, whp, $\bY=Y_1\dots Y_{n-\nu}$ and $\bX^{\nu}$ have the same block sizes.
By Lemma~\ref{decomposition vs marked}, whp, the decomposition points of $\bY$ are exactly the marked points of $\bY$. 
By Lemma~\ref{lastd_2}, whp, the random sequence $\bY$ has marked points in both of the intervals $[1,d-\nu-1]$ and $[n-d-\nu+1,n-2\nu-1]$.
Consequently, $\bX^{\nu}$ has decomposition points both in $[\nu+1, d-1]$ and $[n-d+1,n-\nu+1]$ whp. 
On the other hand, by Corollary~\ref{first-last-k}, $\bX$ does not contain any decomposition point in $[\nu]\cup [n-\nu,n-1]$ whp. 
As a result, both $L_{\textup{first}}$ and $L_{\textup{last}}$ lie in the set $[\nu+1,d-1]$. 
Then, whp, the internal blocks (blocks other than the first and last blocks) of $\bX$ are the same as the internal blocks of $\bX^{\nu}$, which have the same sizes as the internal blocks of $\bY$ whp. Then,
\[
\lim_{n\to\infty} \bbP(L_{\textup{max}}\leq d) = \lim_{n\to\infty} \bbP_{\mathcal{Y}_1}(B=0)= e^{-e^{-z}}, \quad d:=\left \lf \frac{\log (nh)+z}{h} \right \rf,
\]
where the second equality is due to Lemma~\ref{BluePoisson}. 
\end{proof}

\section*{Acknowledgment}
We are grateful to the editor for pinpointing expertly a badly phrased paragraph in the
initial version. The hardworking referees read the paper with painstaking care and provided us with many valuable comments and suggestions.

\bibliographystyle{amsplain}

\end{document}